\newtheorem{theorem}{Theorem}
\newtheorem{lemma}{Lemma}
\newtheorem{claim}{Claim}
\newtheorem{rem}{Remark}
\newcommand{\red}[1]{\textcolor{black}{#1}}
\newcommand{\iain}[1]{\textcolor{black}{#1}}
\newcommand{\wen}[1]{\textcolor{black}{#1}}
\newcommand{\ben}[1]{\red{#1}}
\title{{\bf Critical $(P_5,W_4)$-Free Graphs}}
\author[a,b]{Wen Xia}
\author[c]{Jorik Jooken}
\author[c,d]{Jan Goedgebeur}
\author[e]{Iain Beaton}
\author[f]{Ben Cameron}
\author[a,b]{Shenwei Huang\thanks{Email: shenweihuang@nankai.edu.cn.}}
\affil[a]{College of Computer Science, Nankai University, Tianjin 300071, China}
\affil[b]{Tianjin Key Laboratory of Network and Data Security Technology, Nankai University, Tianjin 300071, China}
\affil[c]{Department of Computer Science, KU Leuven Campus Kulak-Kortrijk, 8500 Kortrijk, Belgium}
\affil[d]{Department of Applied Mathematics, Computer Science and Statistics, Ghent University, 9000 Ghent, Belgium}
\affil[e]{Department of Mathematics and Statistics, Acadia University, Wolfville, NS Canada}
\affil[f]{School of Mathematical and Computational Sciences, University of Prince Edward Island, Charlottetown, PE Canada}
\begin{document}

\maketitle

\begin{abstract}
A graph $G$ is $k$-vertex-critical
if $\chi(G) = k$ but $\chi(G-v)<k$ for all $v \in V(G)$. A graph is $(H_1,H_2)$-free if it contains
no induced subgraph isomorphic to $H_1$ nor $H_2$.  A $W_4$ is the graph consisting of
a $C_4$ plus an additional vertex adjacent to all the vertices of the $C_4$.
 We show that there are finitely many $k$-vertex-critical
$(P_5,W_4)$-free graphs for all $k \ge 1$ and we characterize all $5$-vertex-critical
$(P_5,W_4)$-free graphs.
Our results imply the existence of a polynomial-time certifying algorithm to decide the $k$-colorability of
$(P_5,W_4)$-free graphs for each $k \ge 1$ where the certificate is either a $k$-coloring or a $(k+1)$-vertex-critical induced subgraph.

{\bf Keywords.} Graph coloring; $k$-critical graphs; Strong perfect graph theorem; Polynomial-time algorithms.

\end{abstract}

\section{Introduction}

All graphs in this paper are finite, undirected and simple. Let $P_t$ and $C_t$ denote the path and the cycle on $t$ vertices, respectively.
 Let $K_n$ be the complete graph on $n$ vertices.  The complement of $G$ is denoted by $\overline{G}$. For two graphs
$G$ and $H$, we use $G+H$ to denote the disjoint union of $G$ and $H$. For a positive integer $r$, we use $rG$
to denote the disjoint union of $r$ copies of $G$. For $s,r \geq 1$, let $K_{r,s}$ be the complete bipartite
graph with one part of size $r$ and the other part of size $s$. The {\em clique number} of $G$, denoted by $\omega(G)$, is the size of the largest clique in $G$.

	A $k$-{\em coloring} of a graph $G$ is an assignment of $k$ colors to its vertices such that adjacent vertices are assigned different
colors. A graph is $k$-{\em colorable} if it has a $k$-coloring. The {\em chromatic number} of $G$, denoted by $\chi(G)$, is the minimum number $k$ for which $G$ is $k$-colorable. A graph $G$ is said to be $k$-{\em chromatic} if $\chi(G)=k$.

We say that $G$ is {\em critical} if $\chi(H)<\chi(G)$ for every proper subgraph $H$ of $G$. A $k$-{\em critical} graph is one that is $k$-chromatic and critical. For
example, odd cycles are the only 3-critical graphs. Vertex-criticality is a weaker notion.  A graph $G$ is $k$-{\em vertex-critical}  if $\chi(G) = k$ but $\chi(G-v) < k$ for any $v \in V(G)$.

For a fixed integer $k$, the {\em $k$-coloring problem} is to determine whether a given graph is $k$-colorable. It is known
that this problem is NP-complete for all $k \ge 3$ \cite{K72}. But if we restrict the graph structure,
then there may exist polynomial-time algorithms to solve the $k$-coloring problem for all $k$.
One of the most popular structural restrictions is to forbid induced subgraphs.
A graph $G$ is $\mathcal{H}$-free if it does not contain any member in $\mathcal{H}$ as an induced subgraph. When $\mathcal{H}$ consists of a single graph $H$ or two graphs $H_1$ and
$H_2$, we write $H$-free and $(H_1,H_2)$-free instead of $\{H\}$-free and $\{H_1,H_2\}$-free, respectively. We say that $G$ is $k$-vertex-critical $\mathcal{H}$-free if it is $k$-vertex-critical
and $\mathcal{H}$-free. We say that $G$ is $k$-critical $\mathcal{H}$-free if $G$ is $k$-chromatic, $\mathcal{H}$-free and every proper $\mathcal{H}$-free subgraph of $G$ is $(k-1)$-colorable.

It is known that the $k$-coloring problem remains NP-complete on $H$-free graphs for all $k \ge 3$
when $H$ contains an induced claw~\cite{KL07} or cycle~\cite{H81,LG83}.
This means that if the $k$-coloring problem can be solved in polynomial-time on $H$-free graphs for all $k \ge 3$, then
$H$ must be a linear forest (assuming $\text{P} \neq \text{NP}$). In 2010, Ho\`{a}ng et al.~\cite{HKLSS10} proved that the $k$-coloring problem can be solved in polynomial-time
on $P_5$-free graphs for all $k$.  Later, Huang~\cite{H16} showed that the problem remains NP-complete for $P_6$-free graphs for all $k \ge 5$ and for $P_7$-free graphs for all $k \ge 4$. So $P_5$ is the largest connected subgraph that can be forbidden for which $k$-coloring can be solved
in polynomial-time for all $k$ (again assuming $\text{P} \neq \text{NP}$). Therefore, $P_5$-free graphs have attracted widespread attention from many scholars.
The algorithms in~\cite{HKLSS10}
produce a $k$-coloring if the input graph is $k$-colorable; however, the algorithms do not produce an easily verifiable certificate when the input graph is not $k$-colorable.

An algorithm is {\em certifying} if it returns with each output a simple and easily verifiable certificate that the particular output is correct. \ben{For practical purposes, certifying algorithms are highly desirable as they allow for robust yet efficient testing of implementations of the algorithms~\cite{McConnell2011}. For theoretical purposes, it is of interest to determine when certifying algorithms exist.}
For a $k$-colorability algorithm to be certifying, it
should return a $k$-coloring if one exists and a $(k+1)$-vertex-critical induced subgraph if
the graph is not $k$-colorable. \ben{In fact, when the number of $(k+1)$-vertex-critical graphs is finite in a given family, a polynomial-time $k$-colorability algorithm is readily implemented by searching the input graphs for induced $(k+1)$-vertex-critical graphs (see~\cite{CHLS19} for details).}

A \ben{linear-time} certifying algorithm for 3-colorability of $P_5$-free graphs is provided by showing that the number
of 4-vertex-critical $P_5$-free graphs is finite~\cite{BHS09,MM12}.
Given this result, one may consider whether there are only finitely many $k$-vertex-critical
$P_5$-free graphs for $k \ge 5$. Unfortunately,  \ben{for each $k\ge 5$,} it was shown that there are infinitely
many $k$-vertex-critical $P_5$-free graphs~\cite{HMRSV15}. This led to significant interest in determining which
subfamilies of $P_5$-free graphs admit polynomial-time $k$-colorability algorithms that are also certifying. 
In 2021, K.~Cameron, Goedgebeur, Huang and Shi~\cite{CGHS21} obtained the dichotomy theorem
that there are  infinitely many $k$-vertex-critical $(P_5,H)$-free graphs for $H$ of order 4 if and only if $H$ is $2P_2$ or $P_1+K_3$.
In \cite{CGHS21}, the authors also posed the natural question of which five-vertex graphs $H$ lead to only finitely many $k$-vertex-critical $(P_5,H)$-free graphs.

	Significant progress has already been made towards solving this question. \ben{The infinite family of $k$-vertex-critical $P_5$-free graphs was generalized to infinite families of $k$-vertex-critical $(P_5,C_5)$-free graphs for each $k\ge 6$~\cite{CH23}}. On the other hand, it is known that there are only finitely many $5$-vertex critical $(P_5,H)$-free graphs
 when $H=C_5$~\cite{HMRSV15}, bull~\cite{HLX23} or chair~\cite{HL23}.
  For all $k \ge 1$, it has been shown that there are only finitely many $k$-vertex-critical $(P_5,H)$-free graphs
 when
$H$ = dart~\cite{XJGH23}, banner~\cite{BGS22}, $P_2+3P_1$~\cite{CHS22}, $P_3+2P_1$~\cite{ACHS22},
            $ \overline{P_3+P_2}$ and gem~\cite{CGS}, $ K_{2,3}$ and $K_{1,4}$~\cite{KP17},
            $K_{1,3}+P_1$ and $\overline{K_3+2P_1}$~\cite{XJGH24} and $\overline{P_5}$~\cite{DHHMMP17}.

	\subsection{Our contributions}
	In this paper, we continue to study the finiteness of vertex-critical $(P_5,H)$-free graphs when $H$ has order 5. The 4-wheel $W_4$ is the graph consisting of a $C_4$ plus an additional vertex adjacent to all the vertices of the $C_4$ (see \autoref{fig:W4}). Our main result is as follows.

\begin{figure}[h!]
            \centering
            \def\c{0.5}
            \def\r{2}
             \scalebox{\c}{
            \begin{tikzpicture}
                \GraphInit[vstyle=Classic]
                \Vertex[L=\hbox{},Lpos=90,x=\r*0.0cm,y=\r*1.0cm]{v1}
                \Vertex[L=\hbox{},Lpos=180,x=\r*-0.9510565162951535cm,y=\r*0.30901699437494745cm]{v5}
                
                \Vertex[L=\hbox{},x=\r*0.9510565162951535cm,y=\r*0.30901699437494745cm]{v2} 
                \Vertex[L=\hbox{},Lpos=270,x=\r*0.0cm,y=\r*-0.38196602cm]{u}
                \Vertex[L=\hbox{},Lpos=45,x=\r*0.0cm,y=\r*0.30901699437494745cm]{v}

                \Edge[](u)(v)
                \Edge[](u)(v2)
                \Edge[](v)(v1)
                \Edge[](v)(v2)
                \Edge[](v)(v5)
                \Edge[](v1)(v5)
                \Edge[](v1)(v2)
                \Edge[](u)(v5)

            \end{tikzpicture}
            }
        \caption{The $4$-wheel $W_4$.}
            \label{fig:W4}
        \end{figure}
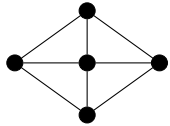

\begin{theorem}\label{th:6-vertex-critical}
   For every fixed integer $k \ge 1$, there are only finitely many $k$-vertex-critical $(P_5,W_4)$-free graphs.
\end{theorem}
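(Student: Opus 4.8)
The plan is to show that any $k$-vertex-critical $(P_5,W_4)$-free graph has bounded order, by combining structural decomposition around a maximum clique with induction on $k$. Let $G$ be such a graph with $\omega(G)=\omega$. Since $W_4$-free graphs have no vertex whose neighbourhood contains an induced $C_4$, and $P_5$-free graphs with a large clique are well-understood, the first step is to fix a maximum clique $K$ and partition $V(G)\setminus K$ according to which vertices of $K$ a vertex is adjacent to. The key local observation to establish is that for each vertex $v\notin K$, the non-neighbourhood of $v$ inside $K$ is small — indeed $W_4$-freeness forbids a vertex from missing two non-adjacent (hence, in a clique, any two) vertices of $K$ while being adjacent to two others forming a $C_4$ through $v$; one shows $|K\setminus N(v)|\le 1$ or a similarly small constant. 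This severely limits the number of distinct "types" of vertices outside $K$.

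Next I would bound $\omega(G)$ itself as a function of $k$. Because $G$ is $k$-vertex-critical, $\chi(G)=k$, so $\omega\le k$; conversely, criticality forces every vertex to lie in a $K_\omega$-ish dense region. More importantly, the vertices at "distance behaviour" from $K$ must be colourable with few colours: deleting $K$ leaves a $(P_5,W_4)$-free graph that is $(k-\omega+1)$-colourable-ish, and one uses the characterization/finiteness for smaller $k$ inductively. The base cases $k\le 3$ are classical (e.g. $3$-critical graphs are odd cycles, of which only $C_5$ is $P_5$-free and it is $W_4$-free), and $k=4$ follows from the finiteness of $4$-vertex-critical $P_5$-free graphs \cite{BHS09,MM12}. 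For the inductive step, since each vertex of $K$ can be removed to get a $(k-1)$-colourable graph, the "extra" structure outside $K$ is controlled, and the bounded number of vertex-types together with $P_5$-freeness (which bounds the structure of each type class — each class is $P_4$-free or has bounded independent set behaviour) yields a bound on $|V(G)|$.

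The main obstacle will be handling the vertices outside $K$ that are adjacent to all of $K$: these form a set $D$ with $G[D]$ also $(P_5,W_4)$-free, and $G[D]$ has clique number $\omega-|K|<\ldots$ — one needs $G[D]$ to again be covered by an inductive bound, but $D$ is not itself vertex-critical. The fix is to argue that $G[D]$ has bounded chromatic number (namely $k-\omega$) and that $(P_5,W_4)$-free graphs of bounded chromatic number and bounded clique number have bounded order only if we also control independent sets — here $W_4$-freeness is crucial, as it forces neighbourhoods to be $C_4$-free, hence (with $P_5$-freeness) the complement of each neighbourhood is well-structured, ultimately bounding $\alpha(G)$ on the relevant pieces. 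Assembling these — bounded $\omega$, bounded number of vertex-types, bounded size of each type class, bounded $\alpha$ on $D$ — gives the finite bound on $|V(G)|$, completing the proof. The delicate part throughout is ensuring the constants depend only on $k$ and not on $G$, which requires each structural claim to be uniform; I expect the $W_4$-free neighbourhood analysis to be where the real work lies.
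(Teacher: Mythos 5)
Your proposal rests on a ``key local observation'' that is simply false: $W_4$-freeness places \emph{no} restriction on how a vertex $v$ outside a clique $K$ attaches to $K$. Any induced $C_4$ can meet a clique in at most two (adjacent) vertices, so $K\cup\{v\}$ contains no induced $C_4$ at all, hence no $W_4$, regardless of $N(v)\cap K$. Concretely, $K_n$ together with a vertex adjacent to exactly one vertex of $K_n$ is $(P_5,W_4)$-free, so $|K\setminus N(v)|$ can be $n-1$; and even inside graphs relevant to this paper, in $\overline{C_7}$ (which is $(P_5,W_4)$-free) a vertex can miss two of the three vertices of a maximum clique. Since the bound on $|K\setminus N(v)|$ was what you used to limit the number of vertex ``types'' outside $K$, the whole decomposition collapses. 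The later steps are also unsupported: criticality does not force vertices into ``$K_\omega$-ish dense regions,'' $G-K$ being $(k-\omega+1)$-colourable does not bound its order, and the claim that $W_4$-freeness plus $P_5$-freeness bounds $\alpha$ on the relevant pieces is exactly the hard content you would need to prove --- nothing in the sketch delivers it. The constants-depend-only-on-$k$ issue you flag at the end is not the delicate part; the missing part is any true structural lemma to hang the argument on.

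For comparison, the paper does not induce around a maximum clique at all. It first observes that a $k$-vertex-critical graph containing $K_k$ or $\overline{C_{2k-1}}$ is that graph itself; otherwise $G$ is imperfect, and since $P_5$-freeness excludes $C_{2t+1}$ for $t\ge 3$ and $W_4$-freeness excludes $\overline{C_{2t+1}}$ for $t\ge 4$, the Strong Perfect Graph Theorem (\autoref{thm:SPGT}) forces an induced $C_5$ or $\overline{C_7}$. The real work is then in \autoref{lem:c5} and \autoref{lem:c7}: partition $V(G)$ by neighbourhoods on the $C_5$ (resp.\ $\overline{C_7}$), establish many adjacency properties between the parts, and bound each part using cliques of bounded size, homogeneous components together with \autoref{lem:homoge}, the anticomparability lemma (\autoref{lem:XY}), the Pigeonhole Principle, and induction on $k$ (with \autoref{4-vertex-cri} as the base case). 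An odd hole/antihole gives leverage precisely because vertices outside it \emph{are} heavily constrained by $P_5$- and $W_4$-freeness --- the leverage your clique-based decomposition lacks.
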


Our result implies the existence of a polynomial-time certifying algorithm to decide the $k$-colorability of
$(P_5,W_4)$-free graphs for $k \ge 1$. \ben{We note that $W_4$ is the densest graph of order $5$ currently known that can be forbidden as an induced subgraph with $P_5$ and result in only finitely many $k$-vertex-critical graphs for all $k$. While the big picture of our proof techniques are similar to previous work in this area, that is, to bound the structure around odd holes and odd antiholes and apply the Strong Perfect Graph Theorem, the density of $W_4$ made it especially difficult to induce around a $C_5$. This led us to develop novel arguments involving homogeneous components of sets that we expect to be of useful for the structure of $(P_5,H)$-free graphs for $H\neq W_4$.} We also characterize all $5$-vertex-critical $(P_5,W_4)$-free graphs \ben{with the aid of exhaustive computer search} based on a recursive graph generation algorithm, leading to the following theorem:

\begin{theorem}
\label{th:characterization}
    Let $\mathcal{S}_1$ and $\mathcal{S}_2 \subset \mathcal{S}_1$ be the set of $5$-vertex-critical $(P_5,W_4)$-free graphs and $5$-critical $(P_5,W_4)$-free graphs, respectively. We have $|\mathcal{S}_1|=64$, $|\mathcal{S}_2|=21$ and the largest graphs in $\mathcal{S}_1$ and $\mathcal{S}_2$ have $17$ vertices.
\end{theorem}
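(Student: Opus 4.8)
The proof of \autoref{th:characterization} is computational, and the plan is to reduce the characterization to a finite search and then carry out an exhaustive recursive generation. By \autoref{th:6-vertex-critical} the set $\mathcal{S}_1$ is finite, and, as is standard in this area, the proof of \autoref{th:6-vertex-critical} can be made to yield an explicit upper bound $N$ on the number of vertices of any $5$-vertex-critical $(P_5,W_4)$-free graph. Hence it suffices to generate all $(P_5,W_4)$-free graphs on at most $N$ vertices, retain those that are $5$-vertex-critical to obtain $\mathcal{S}_1$, and then decide which of these are $5$-critical to obtain $\mathcal{S}_2$. Since $N$ is far too large for a naive enumeration of all graphs on at most $N$ vertices, the point is to organise the generation so that almost every branch is pruned early.

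We would build $(P_5,W_4)$-free graphs recursively, starting from a single vertex and repeatedly adding one new vertex together with each admissible neighbourhood into the current graph, rejecting a partial graph as soon as it contains an induced $P_5$ or $W_4$; this is safe because $(P_5,W_4)$-freeness is hereditary. The crucial pruning uses criticality: if the current graph $H$ satisfies $\chi(H)=5$, then $H$ can occur as an induced subgraph of a $5$-vertex-critical graph only if $H$ is itself that graph, because a $5$-vertex-critical graph is never a proper induced subgraph of another. So we would test whether such an $H$ is $5$-vertex-critical, output it in that case, and in either case stop extending $H$; only graphs with $\chi(H)\le 4$ are extended further, and the recursion depth is capped at $N$. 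To guarantee that the search is exhaustive and produces no isomorphic duplicates we would use a canonical-form isomorphism rejection scheme (orderly generation / canonical construction path). Several structural facts speed things up considerably: every $5$-vertex-critical graph has minimum degree at least $4$ and has no clique cutset, and, since $G-v$ is $4$-colorable for every vertex $v$ of a $5$-vertex-critical graph $G$, the search may be seeded with the finite, separately computable family of $4$-vertex-critical $(P_5,W_4)$-free graphs.

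Having produced $\mathcal{S}_1$, we would obtain $\mathcal{S}_2$ by testing, for each $G\in\mathcal{S}_1$, whether $G-e$ is $4$-colorable for every edge $e$ of $G$; such a $G$ lies in $\mathcal{S}_2$ precisely when all these tests succeed. Finally, because the statement rests on a computation, we would verify it independently: re-check with separate code that each of the $64$ output graphs is $(P_5,W_4)$-free and $5$-vertex-critical and that the claimed $21$ graphs are $5$-critical, and re-run the generation with an independent implementation (and an independently verified value of $N$) to confirm both completeness and the bound of $17$ on the number of vertices.

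The main obstacle is tractability of the search rather than any isolated mathematical step: the number of intermediate $(P_5,W_4)$-free graphs that arise before the bound $N$ is reached is enormous, so the criticality-based pruning must be invoked as early and as often as possible, and the colorability and chromatic-number tests must be implemented efficiently. Relatedly, one wants the bound $N$ coming out of \autoref{th:6-vertex-critical} (or a sharper bound, for instance on the clique number, or on the size of the ``basic'' part appearing in the structural analysis) to be small enough that the computation actually terminates. A secondary issue that requires care is establishing that the isomorphism rejection is complete, so that the enumeration provably misses nothing.
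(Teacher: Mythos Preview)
Your overall plan---computational exhaustive generation with isomorphism rejection and criticality-based pruning---matches the paper's, but two points of your plan differ from what the paper actually does, and one of them is the crux.

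First, seeding: the paper does not start from a single vertex or from the $4$-vertex-critical graphs. It uses the structural dichotomy from the proof of \autoref{th:6-vertex-critical} (via the Strong Perfect Graph Theorem): every $5$-vertex-critical $(P_5,W_4)$-free graph is $K_5$, or $\overline{C_9}$, or contains an induced $C_5$, or contains an induced $\overline{C_7}$. So the generator is run twice, once with $I=C_5$ and once with $I=\overline{C_7}$. Your seeding by $4$-vertex-critical graphs is also valid, but the paper's choice ties the computation directly to the structural analysis.

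Second, and more important, the paper does \emph{not} extract an explicit bound $N$ from \autoref{th:6-vertex-critical} and cap the recursion there. The bounds implicit in Sections~\ref{sec:C5}--\ref{sec:C7} are iterated exponentials and are hopelessly large for that purpose, which is exactly the obstacle you flag but do not resolve. Instead, the paper relies on pruning rules strong enough that the recursion terminates on its own whenever the class is finite, and termination then certifies completeness. The decisive pruning rule is not your ``stop extending once $\chi(H)=5$'' but the comparable-vertices/\autoref{lem:XY} rule: whenever the current graph $H$ has disjoint $X,Y$ with $N_H(X)\subseteq N_H(Y)$ (in particular two comparable vertices), the next vertex added must distinguish them, which sharply constrains the admissible neighbourhoods and forces the tree to die out. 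Without this, and relying instead on a depth cap of $N$, your computation would not terminate in practice. The paper also records that two independent implementations were used; the $k=6$ run was attempted but did not terminate, which illustrates that even with these stronger pruning rules the method is near its limit.
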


	The remainder of the paper is organized as follows. We present some preliminaries in Section~\ref{Preliminarlies}. We show that there are finitely many $k$-vertex-critical ($P_5, W_4$)-free graphs for all $k \ge 1$ in Section~\ref{6-vertex-critical} \ben{assuming two lemmas that will be proved in Sections~\ref{sec:C5} and~\ref{sec:C7}, respectively}. We then characterize all such graphs for $k=5$ in Section~\ref{characterization}. Finally, we give a conclusion in Section~\ref{conclusion}.

\section{Preliminaries}\label{Preliminarlies}
For general graph theory notation we follow~\cite{BM08}. For $k\ge 4$, an induced  cycle of length $k$ is called a {\em $k$-hole}. A $k$-hole is an {\em odd hole} (respectively {\em even hole}) if $k$ is odd (respectively even). A {\em $k$-antihole} is the complement of a $k$-hole. Odd and even antiholes are defined analogously.
	
	Let $G=(V,E)$ be a graph. If $uv\in E(G)$, we say that $u$ and $v$ are {\em neighbors} or {\em adjacent}, otherwise $u$ and $v$ are {\em nonneighbors} or  {\em nonadjacent}. The {\em neighborhood} of a vertex $v$, denoted by $N_G(v)$, is the set of neighbors of $v$. For a set $X\subseteq V(G)$, let $N_G(X)=\cup_{v\in X}N_G(v)\setminus X$. We shall omit the subscript whenever the context is clear. For $x \in V(G)$ and $S\subseteq V(G)$, we denote by $N_S(x)$ the set of neighbors of $x$ that are in $S$, i.e., $N_S(x)=N_G(x)\cap S$. For two sets $X,S\subseteq V(G)$, let $N_S(X)=\cup_{v\in X}N_S(v)\setminus X$.

For $X,Y\subseteq V(G)$, we say that $X$ is {\em complete} (resp. {\em anticomplete}) to $Y$ if every vertex in $X$ is adjacent (resp. nonadjacent) to every vertex in $Y$. If $X=\{x\}$, we write ``$x$ is complete (resp. anticomplete) to $Y$'' instead of ``$\{x\}$ is complete (resp. anticomplete) to $Y$''. If a vertex $v$ is neither complete nor anticomplete to a set $S$, we say that $v$ is {\em mixed} on $S$. For a vertex $v\in V$ and an edge $xy\in E$, if $v$ is mixed on $\{x,y\}$, we say that $v$ is {\em mixed} on $xy$. For a set $H\subseteq V(G)$, if no vertex in $V(G) \setminus H$ is mixed on $H$, we say that $H$ is a {\em homogeneous set}, otherwise $H$ is a {\em nonhomogeneous set}.

    \begin{lemma}\label{lem:edgemixhomo}
    \iain{For disjoint sets $H, A \subseteq V(G)$. If $v \in A$ is not mixed on any edge in $H$, then $v$ is not mixed on any connected component of $H$.}    
    \end{lemma}

    \begin{proof}
        \iain{Suppose that $v \in A$ is not mixed on any edge in $H$.
        Let $H'$ be a connected component of $H$ with edge $xy$.
        Let $v$ be (anti)complete to $xy$.
        By the connectivity of $H'$, for any vertex $u \in H'$ there exists a path $xy_1y_2 \cdots u$ from $x$ to $u$.
        As $v$ is (non)adjacent to $x$ and $v$ is not mixed on any edge of $H'$ then $v$ is (non)adjacent to $y_1$.
        Similarly, $v$ must be (anti)complete to each edge in the path from $x$ to $u$.
        Therefore $v$ is (non)adjacent to $u$ and hence (anti)complete to $H'$.
        }
    \end{proof}

 A vertex subset $S\subseteq V(G)$ is {\em stable} if no two vertices in $S$ are adjacent. A {\em clique} is the complement of a stable set. Two nonadjacent vertices $u$ and $v$ are said to be {\em comparable} if $N(v)\subseteq N(u)$ or $N(u)\subseteq N(v)$.
 For an induced subgraph $A$ of $G$, we write $G-A$ instead of $G-V(A)$. For $S\subseteq V$, the subgraph \emph{induced} by $S$ is denoted by $G[S]$. We say that a vertex $w$ {\em distinguishes} two vertices $u$ and $v$ if $w$ is adjacent to exactly one of $u$ and $v$.

 We proceed with a few useful results that will be used later. The first folklore property of vertex-critical graphs is that such graphs
 contain no comparable vertices. A generalization of this property was presented in~\cite{CGHS21}.

     \begin{lemma}[\cite{CGHS21}] \label{lem:XY}
		Let $G$ be a $k$-vertex-critical graph. Then $G$ has no two nonempty disjoint subsets $X$ and $Y$ of $V(G)$ that satisfy all the following conditions.
		\begin{itemize}
			\item $X$ and $Y$ are anticomplete to each other.
			\item $\chi(G[X])\le\chi(G[Y])$.
			\item Y is complete to $N(X)$.
		\end{itemize}
	\end{lemma}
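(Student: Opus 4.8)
The plan is to argue by contradiction: suppose that nonempty disjoint sets $X,Y$ with the three listed properties exist, and use the $k$-vertex-criticality of $G$ to manufacture a proper $(k-1)$-coloring of $G$, contradicting $\chi(G)=k$. The guiding intuition is that $Y$ "reserves" at least $\chi(G[X])$ colors that cannot appear on $N(X)$, and those colors are exactly what is needed to recolor $X$.

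First I would fix a vertex $x_0\in X$ (possible since $X\neq\emptyset$) and take a proper $(k-1)$-coloring $c$ of $G-x_0$, which exists by vertex-criticality. Note that $N(X)$ avoids $X$ by definition and $Y$ avoids $X$ because $Y$ is anticomplete to $X$; hence $N(X)\cup Y\subseteq V(G-x_0)$ and both sets are colored by $c$. Let $C_N:=c(N(X))$ and $C_Y:=c(Y)$. Since $Y$ is complete to $N(X)$ and disjoint from it, properness of $c$ forces $C_N\cap C_Y=\emptyset$, so $|C_N|+|C_Y|\le k-1$. Since $c$ restricted to $Y$ is a proper coloring of $G[Y]$, we get $|C_Y|\ge\chi(G[Y])\ge\chi(G[X])$, and therefore the palette $P:=\{1,\dots,k-1\}\setminus C_N$ satisfies $|P|\ge\chi(G[X])$.

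To finish I would recolor: keep $c$ on $V(G)\setminus X$ and color $G[X]$ properly using colors from $P$ only, which is possible because $|P|\ge\chi(G[X])$. Edges inside $X$ and inside $V(G)\setminus X$ are properly colored by construction, and an edge between $X$ and $V(G)\setminus X$ has its outside endpoint in $N(X)$ (colored from $C_N$) and its inside endpoint colored from $P$, which is disjoint from $C_N$; so the combined coloring is a proper $(k-1)$-coloring of $G$, contradicting $\chi(G)=k$. I do not expect any genuine obstacle here; the only points needing care are verifying that $N(X)$ and $Y$ really are colored by $c$ (i.e.\ lie in $V(G-x_0)$) and that completeness between $Y$ and $N(X)$ gives $C_N\cap C_Y=\emptyset$, both of which are immediate from the hypotheses, together with the trivial but essential inequality $|C_Y|\ge\chi(G[Y])$.
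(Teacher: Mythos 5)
Your proof is correct: deleting a single vertex $x_0\in X$, using the resulting $(k-1)$-coloring, observing that the colors on $Y$ (at least $\chi(G[Y])\ge\chi(G[X])$ of them) are forbidden on $N(X)$, and then recoloring $X$ from that reserved palette is exactly the standard argument for this lemma, which the paper does not reprove but cites from~\cite{CGHS21}. All the delicate points (that $N(X)\cup Y$ lies in $G-x_0$, that completeness of $Y$ to $N(X)$ forces disjoint color sets, and that only $N(X)$ is relevant on the boundary of $X$) are handled properly, so there is nothing to fix.
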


Another useful result is the following Lemma.

\begin{lemma}[\cite{XJGH23}]\label{lem:homoge}
Let $G$ be a $k$-vertex-critical graph and $S$ be a homogeneous set of $V(G)$.
For each component $A$ of $G[S]$, if $\chi(A) = m$ with $m < k$, then $A$ is an $m$-vertex-critical graph.
\end{lemma}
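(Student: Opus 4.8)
The plan is a proof by contradiction. Assume $A$ is \emph{not} $m$-vertex-critical; since $\chi(A)=m$ and deleting a vertex cannot increase the chromatic number, there is a vertex $v\in V(A)$ with $\chi(A-v)=m$. From such a $v$ I will manufacture a proper $(k-1)$-coloring of $G$, contradicting $k$-vertex-criticality. (If $|V(A)|=1$ the statement is immediate, so assume $|V(A)|\ge 2$.)

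First I would unpack what homogeneity buys. Let $N=N_G(S)$. Every vertex of $N$ is adjacent to some vertex of $S$ and, since $S$ is homogeneous, is not mixed on $S$; hence every vertex of $N$ is complete to $S$, and in particular complete to $V(A)$. Equivalently, every vertex of $S$ is adjacent to all of $N$. Note also $v\notin N$, since $v\in S$. Because $G$ is $k$-vertex-critical, $\chi(G-v)=k-1$; fix a proper $(k-1)$-coloring $c$ of $G-v$ with color set $C$ (so $c$ is defined on all of $N$). Since each vertex of $S\setminus\{v\}$ is adjacent to every vertex of $N$, the colors $c$ assigns to $S\setminus\{v\}$ all lie in $C':=C\setminus c(N)$. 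In particular, the restriction of $c$ to $V(A)\setminus\{v\}$ is a proper coloring of $A-v$ using only colors of $C'$, whence $|C'|\ge\chi(A-v)=m$.

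Now, since $\chi(A)=m\le|C'|$, pick a proper $m$-coloring $c_A$ of $A$ that uses only colors from $C'$, and define $c'$ on $V(G)$ by $c'=c$ on $V(G)\setminus V(A)$ and $c'=c_A$ on $V(A)$. Properness inside $V(G)\setminus V(A)$ and inside $V(A)$ is clear; for an edge with one end $x\in V(A)$ and the other end $y\notin V(A)$, $y$ is a neighbor of $x\in S$ lying outside $S$, so $y\in N$ and $c'(y)=c(y)\in c(N)$, which is disjoint from $C'\ni c'(x)$. Hence $c'$ is a proper $(k-1)$-coloring of $G$, a contradiction. Therefore $\chi(A-v)<m$ for every $v\in V(A)$, i.e., $A$ is $m$-vertex-critical.

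I do not anticipate a genuine obstacle. The whole content is the observation that a homogeneous set $S$ forces $N_G(S)$ to be complete to $S$: this simultaneously ``wastes'' the colors of $c(N(S))$ on every vertex of $S$ (leaving at least $m$ colors free to recolor the \emph{entire} component $A$, not merely $A-v$) and makes that recoloring automatically compatible with the rest of $G$. The only things requiring care are the degenerate case $|V(A)|=1$ and the bookkeeping point $v\notin N(S)$, which is what lets us talk about $c(N(S))$ at all.
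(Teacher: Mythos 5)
Your proof is correct: homogeneity forces $N_G(S)$ to be complete to $S$, so in a $(k-1)$-coloring of $G-v$ the colors on $A-v$ avoid $c(N_G(S))$, giving at least $m$ spare colors with which the whole component $A$ can be recolored consistently with the rest of $G$ --- exactly the standard argument (the only implicit step, that a neighbor of $x\in V(A)$ outside $V(A)$ must lie outside $S$, follows immediately from $A$ being a component of $G[S]$). The paper itself does not prove this lemma but cites it from~\cite{XJGH23}, and your argument is the expected one, so there is nothing further to reconcile.
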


The following theorem tells us there are finitely many 4-vertex-critical $P_5$-free graphs.

\begin{theorem}[\cite{BHS09,MM12}]\label{4-vertex-cri}
If $G = (V,E)$ is a 4-vertex-critical $P_5$-free graph, then $|V| \le 13$.
\end{theorem}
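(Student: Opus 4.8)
The plan is to combine the known structure of $P_5$-free graphs around a small dominating set with the elementary properties forced by vertex-criticality. First I would record the standard reductions: since $G$ is $4$-vertex-critical it is connected, has minimum degree at least $3$, contains no two comparable vertices, and satisfies $\omega(G)\le 4$. If $\omega(G)=4$, fix a $K_4$ in $G$; deleting any vertex outside it would leave a $4$-chromatic induced subgraph, contradicting criticality, so $G=K_4$ and $|V(G)|=4$. Hence we may assume $\omega(G)\le 3$, i.e.\ $G$ is $K_4$-free.

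Next I would invoke the classical structure theorem for connected $P_5$-free graphs of Bacsó and Tuza: such a graph has a dominating clique or a dominating induced $C_5$. As $G$ is $K_4$-free, in the first case the dominating clique $D$ satisfies $|D|\le 3$, and in the second case $D$ is a fixed induced $C_5$; either way $V(G)=D\cup N(D)$, so every $v\in V(G)\setminus D$ is partly described by its trace $N_D(v)\subseteq D$. I would then partition $V(G)\setminus D$ into the classes $V_S=\{v\in V(G)\setminus D : N_D(v)=S\}$ over all $S\subseteq D$; there are at most $2^{|D|}\le 32$ of them.

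The heart of the argument is to bound each class and the interactions between classes. $P_5$-freeness alone eliminates most traces: in the $C_5$ case a short check shows that a vertex with only one or two neighbours on the $C_5$ completes an induced $P_5$, so every outside vertex has at least three neighbours on $D$; in the clique case, forbidding $K_4$ additionally rules out the full trace $S=D$ when $|D|=3$. To bound $|V_S|$ for a surviving $S$, note first that two nonadjacent vertices of $V_S$ with the same neighbourhood outside $D$ are comparable, which is impossible; iterating this while using $P_5$-freeness to forbid long induced paths within and across the classes, $K_4$-freeness to limit cliques, and \autoref{lem:XY} (with \autoref{lem:homoge} handling homogeneous subsets that appear) forces each $V_S$ to have bounded size and forces only boundedly many classes to be nonempty at once. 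Adding up the per-class bounds gives $|V(G)|\le 13$, and pushing the analysis to completion also enumerates the finitely many extremal graphs.

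The main obstacle is this last step, and within it the dominating-clique case with $|D|=3$: there many traces are a priori admissible and the classes can interact in intricate ways, so essentially all of the work is a careful case analysis that eliminates large families of ``look-alike'' vertices by simultaneously exploiting $P_5$-freeness, $K_4$-freeness and criticality. By contrast, the $C_5$-dominating case is comparatively rigid, since every vertex outside $D$ is already forced to have at least three neighbours on the $C_5$.
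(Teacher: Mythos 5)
There is a genuine gap, and before that a point of context: the paper does not prove this statement at all --- it is \autoref{4-vertex-cri}, imported verbatim from~\cite{BHS09,MM12}, so the real benchmark is whether your sketch would reconstitute the known proofs, which rest on a long explicit analysis (respectively an exhaustive determination of all $4$-vertex-critical $P_5$-free graphs, of which the largest has $13$ vertices). Your proposal correctly assembles the standard preliminaries (connectivity, no comparable vertices, reduction to the $K_4$-free case, the Bacs\'o--Tuza theorem giving a dominating clique or dominating induced $C_5$, and the partition of $V(G)\setminus D$ into trace classes $V_S$), but the entire quantitative content of the theorem is left undone: the passage ``iterating this while using $P_5$-freeness \dots forces each $V_S$ to have bounded size \dots Adding up the per-class bounds gives $|V(G)|\le 13$'' is a statement of intent, not an argument. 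Nothing in the sketch produces the constant $13$, and you yourself concede that ``essentially all of the work'' is the case analysis you have not carried out. Since that case analysis \emph{is} the proof in~\cite{BHS09,MM12}, the proposal cannot be counted as a proof of the statement.

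There is also a concrete error in the one structural claim you do make in the dominating-$C_5$ case: a vertex adjacent to exactly two \emph{non-consecutive} vertices of an induced $C_5$ does not create an induced $P_5$ (check $u$ adjacent to $v_1,v_3$ only: every candidate path through $u$ has a chord). Indeed, these are exactly the sets $S_2(i)$ that this paper must laboriously handle in Section~\ref{sec:C5}; only vertices with one neighbour or two consecutive neighbours on the $C_5$ are excluded. So your assertion that every outside vertex has at least three neighbours on the dominating $C_5$ is false, and the ``comparatively rigid'' case you dismiss is in fact where much of the structural work lives. To repair the approach you would need to reinstate the $S_2$-type classes, control their interactions (as properties \ref{s2s312}--\ref{s2s32:p3-free} do here for general $k$), and then carry the analysis far enough to extract the explicit bound $13$ --- or, as in~\cite{BHS09}, supplement it with an exhaustive generation of the critical graphs.
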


 A graph $G$ is {\em perfect} if $\chi(H)=\omega(H)$ for every induced subgraph $H$ of $G$.
 We conclude this section with the famous Strong Perfect Graph Theorem.
	
	\begin{theorem}[The Strong Perfect Graph Theorem~\cite{CRST06}]\label{thm:SPGT}
		A graph is perfect if and only if it contains no odd holes or odd antiholes.
	\end{theorem}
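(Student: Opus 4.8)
The plan is to prove the two implications separately, with the forward implication being routine and the reverse being the entire substance. For the ``only if'' direction I would first observe that perfection is hereditary by definition, so it suffices to exhibit that odd holes and odd antiholes are themselves imperfect. An odd hole $C_{2t+1}$ with $t\ge 2$ has $\omega=2$ but $\chi=3$, and by the weak perfect graph theorem its complement, the odd antihole, is imperfect as well; hence any graph containing either as an induced subgraph fails to be perfect. For the ``if'' direction I would argue by contradiction: assume some Berge graph (a graph with no odd hole and no odd antihole) is imperfect, and take $G$ to be a \emph{minimal imperfect} Berge graph, meaning $\chi(G)>\omega(G)$ while every proper induced subgraph of $G$ is perfect. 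The whole problem reduces to showing that no such $G$ exists.

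The heart of the argument, and by far the main obstacle, is a purely structural dichotomy for Berge graphs that mentions no colorings whatsoever: \emph{every} Berge graph is either \emph{basic} or admits one of a short list of \emph{decompositions}. The basic class consists of bipartite graphs, complements of bipartite graphs, line graphs of bipartite graphs, complements of such line graphs, and double split graphs; the permitted decompositions are a $2$-join, a $2$-join in the complement, a homogeneous pair, and a balanced skew partition. I would establish this decomposition theorem through a long case analysis organized by which small configurations $G$ contains. The key technical engine is the Roussel--Rubio lemma, which tightly controls how an induced path whose endpoints attach to an anticonnected set may interact with that set; armed with it, one analyses in turn the presence of an induced prism (two triangles joined by three internally disjoint paths), of a ``proper wheel,'' and of a long hole, and in each regime shows that Bergeness forces $G$ either to be basic or to inherit one of the listed decompositions from the local structure.

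Granting this decomposition theorem, I would finish the reduction as follows. First, a minimal imperfect graph cannot be basic, since every basic class is classically perfect: bipartite perfection is immediate, complements of bipartite graphs and line graphs of bipartite graphs reduce to K\H{o}nig's matching and edge-coloring theorems, double split graphs are checked directly, and perfection is closed under complementation. Next, each decomposition is impossible in a minimal imperfect graph. Here I would invoke the known structure of minimal imperfect graphs: by Lov\'asz's theorem they are partitionable with $|V(G)|=\omega\alpha+1$, and by Chv\'atal's star-cutset lemma they contain no star cutset. Cornu\'ejols and Cunningham showed a minimal imperfect graph has no $2$-join, and since the complement of a minimal imperfect Berge graph is again minimal imperfect and Berge, the complementary $2$-join case follows by applying the same result to $\overline{G}$; Chv\'atal and Sbihi rule out a homogeneous pair. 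The final and most delicate exclusion is that a minimal imperfect Berge graph has no balanced skew partition, which I would derive by combining the star-cutset lemma with the Roussel--Rubio lemma to promote any balanced skew partition into a forbidden cutset. With basicness and all four decompositions excluded, the dichotomy is contradicted, so no minimal imperfect Berge graph exists and every Berge graph is perfect.

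I expect essentially all of the difficulty to reside in the decomposition theorem rather than in the reduction: the coloring-free structural case analysis --- especially the treatment of the various wheel configurations and the careful bookkeeping needed to extract a clean $2$-join, homogeneous pair, or balanced skew partition in each case --- is where the genuine length and subtlety lie. It is there that the Roussel--Rubio machinery, together with a disciplined induction on the configurations present in $G$, would have to carry the bulk of the proof, while the deduction of perfection from the structure is comparatively short.
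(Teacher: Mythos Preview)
The paper does not prove this statement at all: Theorem~\ref{thm:SPGT} is quoted as a known external result, with the citation to Chudnovsky, Robertson, Seymour and Thomas, and is then used as a black box in Section~\ref{6-vertex-critical} to conclude that an imperfect $(P_5,W_4)$-free graph must contain an induced $C_5$ or $\overline{C_7}$. There is therefore nothing in the paper to compare your proposal against.

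For what it is worth, your outline is a recognisable high-level sketch of the actual Chudnovsky--Robertson--Seymour--Thomas proof, and the overall architecture (structural dichotomy for Berge graphs into basic classes versus decompositions, followed by a reduction showing no minimal imperfect Berge graph can be basic or admit any of the decompositions) is correct. A couple of caveats if you intend to flesh it out: the precise list of decompositions in the final published version is proper $2$-join, proper $2$-join in the complement, and balanced skew partition (homogeneous pair / $M$-join appeared in intermediate formulations but is not needed in the final statement), and the exclusion of balanced skew partitions in a minimal counterexample is not obtained by a short combination of the star-cutset lemma with Roussel--Rubio as you suggest --- that step required substantial additional work in the original paper. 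But none of this bears on the present paper, which simply cites the theorem.
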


  \section{The proof of \autoref{th:6-vertex-critical}}\label{6-vertex-critical}
   \ben{We prove \autoref{th:6-vertex-critical} assuming the following two lemmas that will be proved in Sections~\ref{sec:C5} and~\ref{sec:C7}, respectively to improve the readability of the paper.}
 \begin{lemma}\label{lem:c5}
   If $G$ \ben{is a $k$-vertex-critical $(P_5,W_4)$-free
       graph that} contains an induced $C_5$, then $G$ has finite order.
  \end{lemma}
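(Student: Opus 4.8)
The plan is to fix a $k$-vertex-critical $(P_5,W_4)$-free graph $G$ containing an induced $C_5$, say on vertices $v_1v_2v_3v_4v_5$ (indices mod $5$), and to bound $|V(G)|$ by a function of $k$ alone. The standard approach is to partition $V(G)$ according to how vertices attach to this $C_5$. Since $G$ is $P_5$-free, every vertex not on the $C_5$ has a neighbor on it; and since $G$ is $W_4$-free, no vertex can be complete to any $4$ consecutive vertices of the $C_5$ (those $4$ vertices induce a $P_4$, and adding a common neighbor would create a $W_4$ once we note $v_iv_{i+1}v_{i+2}v_{i+3}$ together with the apex — actually one must check the $C_4$ carefully, but the point is that domination patterns on $C_5$ are severely restricted). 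So each outside vertex has neighborhood on the $C_5$ equal to one of a short list of types: a single vertex $\{v_i\}$, an edge $\{v_i,v_{i+1}\}$, a path $\{v_{i-1},v_i,v_{i+1}\}$, a nonedge $\{v_i,v_{i+2}\}$, or the full $C_5$ (the last being forbidden by $W_4$ once combined with the right auxiliary edges — this needs checking). I would name these sets $V_i$, $E_{i,i+1}$, etc., and the core of the proof is to show each such set has bounded size.

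First I would handle the sets using the vertex-critical machinery: Lemma~\ref{lem:XY} and Lemma~\ref{lem:homoge} let me argue that if one of these sets is ``large'' it must contain a large critical subgraph or lead to a pair of comparable sets. Concretely, for a fixed attachment type the vertices of that set, together with a fixed small piece of the $C_5$, tend to form a homogeneous set or a set whose components are controlled; then Lemma~\ref{lem:homoge} forces each component to be $m$-vertex-critical for some $m<k$, and $P_5$-freeness plus Theorem~\ref{4-vertex-cri} (and induction on $k$, since $(P_5,W_4)$-free $m$-vertex-critical graphs for $m<k$ are finite by the inductive hypothesis of Theorem~\ref{th:6-vertex-critical}) bounds each component; the number of components is then bounded by a comparability/$W_4$-freeness argument. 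I would also use $W_4$-freeness directly to bound adjacencies \emph{between} different attachment-type sets: two outside vertices with overlapping neighborhoods on the $C_5$ often create a $C_4$ plus a dominating vertex, so the bipartite graphs between various $V_i$, $E_{i,i+1}$, etc., are highly structured (complete, empty, or matching-like).

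The main obstacle, as the authors themselves flag in the introduction, is the density of $W_4$: it is hard to ``induce around a $C_5$'' because $W_4$-freeness prevents the usual trick of finding a vertex complete to a $4$-cycle, so the clean homogeneous-set decompositions available for sparser $H$ break down. The key new idea I would pursue is to look at \emph{homogeneous components of the attachment sets}: within each $V_i$ (or each union of a few attachment types), decompose into maximal subsets that behave homogeneously with respect to the rest of the graph, show via Lemma~\ref{lem:edgemixhomo} and Lemma~\ref{lem:XY} that there can only be boundedly many such components, and that each is small by the critical-subgraph argument above. Finally, vertices whose neighborhood meets the $C_5$ in a nonedge $\{v_i,v_{i+2}\}$ are the delicate case — such a vertex plus $v_i,v_{i+1},v_{i+2}$ and a second such vertex can form a $C_4$, so I expect to need a separate careful analysis there, possibly producing either a $W_4$ (contradiction) or a $P_5$ (contradiction) unless that set is small. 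Assembling all the bounds gives $|V(G)| \le f(k)$ for an explicit $f$, completing the proof.
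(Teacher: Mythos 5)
Your decomposition rests on two incorrect structural claims, and these are not minor slips since the paper's proof has to deal with exactly the sets you rule out. First, $P_5$-freeness does \emph{not} force every vertex off the $C_5$ to have a neighbor on it: for example, $C_5$ plus a vertex $u$ complete to it plus a pendant vertex attached only to $u$ is $(P_5,W_4)$-free, so the set $S_0$ of vertices with no neighbors on the cycle can be nonempty. In the paper, bounding $S_0$ is a separate argument: one shows each component of $S_0$ is homogeneous (using that $G$ has no clique cutset and that $S_5$ is a clique), then applies Lemma~\ref{lem:homoge} so that each component is $m$-vertex-critical with $m<k$ and hence bounded by induction, and finally bounds the number of components by Lemma~\ref{lem:XY} and a pigeonhole over neighborhoods in $S_3^2\cup S_4\cup S_5$. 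Second, your list of attachment types is wrong in both directions. The types ``one vertex'' and ``two consecutive vertices'' are impossible, since they immediately yield an induced $P_5$; that is why the paper's partition starts with $S_2(i)$, whose members see the \emph{nonedge} $\{v_{i-1},v_{i+1}\}$. Conversely, the types you discard by alleged $W_4$-arguments do occur: four consecutive vertices of a $C_5$ induce a $P_4$, not a $C_4$, and the whole $C_5$ contains no $C_4$, so vertices adjacent to four vertices of the cycle ($S_4(i)$) and to all five ($S_5$) are perfectly possible; the paper handles them by showing each such set is a clique (property~\ref{s31s4s5}) and hence of size at most $k-3$. You also omit the type $\{v_{i-2},v_i,v_{i+2}\}$ (the paper's $S_3^2(i)$), which turns out to be the most difficult set to bound.

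Beyond the misclassification, the proposal never actually carries out the bounding; it defers precisely the steps where the work lies. The core of the paper's proof is the bound on $S_3^2(i)$: one first removes the vertices with neighbors in the already-bounded sets $S_3^1\cup S_4\cup S_5$ (each such vertex's neighborhood there is a clique, so this part is bounded), observes that the remainder is $P_3$-free so its components are cliques, then bounds the number of \emph{nonhomogeneous} components by $k-1$ through a lengthy case analysis of pairs of mixed vertices that otherwise builds a $K_k$, and bounds the number of homogeneous ($K_1$-)components by $(k+1)\cdot 2^{|S_3^1\cup S_4\cup S_5|}$ through another argument that manufactures a $K_k$ out of pairwise-adjacent private neighbors; a similar but easier analysis handles $S_2(i)$. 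Your sketch compresses all of this into ``the number of components is then bounded by a comparability/$W_4$-freeness argument,'' which is exactly the part that needs a proof and is where the paper's novelty lies. You also flag the $S_2$-type vertices as the delicate case, whereas they are comparatively easy; the genuine difficulty is $S_3^2$. As it stands, then, this is a plan built on an erroneous classification of attachment types, missing the treatment of $S_0$, $S_4$, and $S_5$, and with the central counting arguments absent, so it does not constitute a proof of Lemma~\ref{lem:c5}.
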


       \begin{lemma}\label{lem:c7}
        If $G$ \ben{is a $k$-vertex-critical $(P_5,W_4)$-free
       graph that} contains an induced $\overline{C_7}$, then $G$ has finite order.
       \end{lemma}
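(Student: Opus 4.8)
The plan is to argue by strong induction on $k$, taking as inductive hypothesis that for every $m<k$ there are only finitely many $m$-vertex-critical $(P_5,W_4)$-free graphs; equivalently, there is a bound $f(k-1)$ on their orders. The base cases are immediate, with $m\le 4$ handled through \autoref{4-vertex-cri}, since every $4$-vertex-critical $(P_5,W_4)$-free graph is in particular $4$-vertex-critical and $P_5$-free. Fix an induced $\overline{C_7}$ on $Q=\{v_1,\dots,v_7\}$, indexed so that $v_iv_j\in E$ exactly when $i-j\not\equiv\pm1\pmod 7$; since $\chi(\overline{C_7})=4$ we have $k\ge 4$. It then suffices to bound $|V(G)\setminus Q|$.

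The engine of the proof is the following reduction, which I would establish first: every \emph{proper} homogeneous set $S$ of $G$ has order at most $f(k-1)$. Indeed, if $G[S]$ had two components $A$ and $B$ with $\chi(A)\le\chi(B)$, then $A$ and $B$ are anticomplete and, because $S$ is homogeneous, $B$ is complete to $N(A)$; this contradicts \autoref{lem:XY}. Hence $G[S]$ is connected. Moreover $\chi(G[S])=m<k$ (if $\chi(G[S])=k$ then, choosing any $w\notin S$, the subgraph $G-w\supseteq G[S]$ would be $k$-chromatic, contradicting vertex-criticality), so by \autoref{lem:homoge} the graph $G[S]$ is $m$-vertex-critical and $(P_5,W_4)$-free, and the inductive hypothesis gives $|S|\le f(k-1)$. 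Consequently it is enough to cover $V(G)\setminus Q$ by a bounded number of proper homogeneous sets.

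Next I would partition $V(G)\setminus Q$ by \emph{type}, where the type of $x$ is $N_Q(x)\subseteq Q$; there are at most $2^7$ types, so their number is automatically finite. The point is therefore not to bound the number of types but to show that each type-class, and the anticomplete part $M=V(G)\setminus(Q\cup N(Q))$, decomposes into boundedly many homogeneous sets. The basic tools are $W_4$-freeness and $P_5$-freeness. Since $\overline{C_7}$ contains an induced $C_4$ (for instance on $v_1,v_4,v_2,v_5$), no external vertex is complete to any induced $C_4$ of $Q$, and in particular none is complete to $Q$; combined with \autoref{lem:edgemixhomo} and the connectivity of $\overline{C_7}$, any external vertex that is not anticomplete to $Q$ must be mixed on some edge of $Q$. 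I would use this, together with forbidden-$P_5$ arguments that extend the induced $P_4$'s of $Q$ (such as $v_3v_1v_4v_2$) by external vertices, to reduce the admissible types to a short explicit list and to constrain how vertices of different types, and of $M$, attach to one another.

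The main obstacle is exactly this last step: proving that the type-classes and $M$ split into boundedly many \emph{homogeneous} sets, i.e.\ controlling which external vertices can distinguish two vertices having the same type on $Q$. The density of $W_4$ is what makes this delicate, since it blocks the usual shortcut of directly inducing a $P_5$ off the antihole, so one must repeatedly promote ``not mixed on an edge'' to ``not mixed on a component'' via \autoref{lem:edgemixhomo} and analyse the homogeneous components of the classes; this is the novel machinery advertised in the introduction. Once every class and $M$ is covered by $O(1)$ proper homogeneous sets, each of order at most $f(k-1)$ by the reduction above, we obtain $|V(G)|\le 7+O(1)\cdot f(k-1)$, which is finite, completing the induction.
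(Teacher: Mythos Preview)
Your overall strategy---induction on $k$, partition $V(G)\setminus Q$ by neighbourhood type on $Q$, and bound each class via homogeneity together with \autoref{lem:XY} and \autoref{lem:homoge}---matches the paper's, and your observation that a proper homogeneous set of a vertex-critical graph must be connected (hence itself $m$-vertex-critical for some $m<k$) is correct and slightly cleaner than what the paper writes. The concrete gap is that you never invoke \autoref{lem:c5}. The paper's first move in the $\overline{C_7}$ case is to note that if $G$ also contains an induced $C_5$ then \autoref{lem:c5} already bounds $|G|$, so one may assume $G$ is additionally $C_5$-free. This is not cosmetic: several of the type eliminations and anticompleteness facts rely on $C_5$-freeness rather than on $P_5$- or $W_4$-freeness. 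For instance, a vertex $u$ adjacent to $v_i,v_{i+1}$ but to neither $v_{i-1}$ nor $v_{i+2}$ produces the induced $5$-cycle $u,v_i,v_{i+2},v_{i-1},v_{i+1}$; a vertex with $N_Q(u)=\{v_{i-2},v_i,v_{i+1},v_{i+3}\}$ likewise forces a $C_5$; and the anticompleteness of $S(v_{i-1},v_i,v_{i+1})$ and $S(v_{i+2},v_{i+3},v_{i+4})$ needed for homogeneity also comes from a $C_5$. Your plan mentions only ``forbidden-$P_5$ arguments'', so as written it cannot trim the admissible types down to the two surviving shapes (three consecutive antihole vertices, or five of the form $v_{i-2},v_{i-1},v_i,v_{i+1},v_{i+3}$), and the subsequent homogeneity analysis does not get off the ground.

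A secondary point: you overestimate the difficulty of this lemma. The ``novel machinery'' advertised in the introduction is for the $C_5$ case (\autoref{lem:c5}); once $C_5$-freeness is assumed, the $\overline{C_7}$ analysis is short. One gets $S_0=S_1=S_2=S_4=S_6=S_7=\emptyset$, each $S_5$-class is a clique (hence of size at most $k-4$, bounded directly rather than via homogeneity), and each component of an $S_3$-class is a homogeneous set, with at most $2^{|S_5|}$ such components by \autoref{lem:XY}. Note in particular that the number of homogeneous pieces covering $S_3$ is not an absolute constant but depends on the already-obtained bound on $|S_5|$; your plan should therefore bound $S_5$ first by the clique argument and only then count components in $S_3$, rather than aiming for a single uniform ``$O(1)$ homogeneous sets'' cover.
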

    
\begin{proof}[Proof of \autoref{th:6-vertex-critical}]
          Let $G$ be a $k$-vertex-critical $(P_5,W_4)$-free graph. We show that $|G|$ is bounded. Let $\mathcal{L} = \{K_k,\overline{C_{2k-1}}\}$. If $G$ has a subgraph isomorphic to
  a member $L \in \mathcal{L}$, then $|V(G)|=|V(L)|$ by the definition of vertex-critical and so we are done.
  So, we assume in the following $G$ has no induced subgraph isomorphic to a member in $\mathcal{L}$. Then $G$ is imperfect \ben{since the only perfect $k$-vertex-critical graph is $K_k$}. Since $G$ is $W_4$-free, $G$ does not contain $\overline{C_{2t+1}}$ for $t \geq 4$.
  Moreover, since $G$ is $P_5$-free, it does not contain $C_{2t+1}$ for $t \geq 3$. It then follows from \autoref{thm:SPGT}, $G$ must contain $C_5$ or $\overline {C_7}$. \ben{The result now follows from \autoref{lem:c5} and \autoref{lem:c7}}.
\end{proof}

\section{Proof of \autoref{lem:c5}: Structure around an induced 5-hole}\label{sec:C5}

Let $G=(V,E)$ be a graph and $H$ be an induced subgraph of $G$.
We partition $V\setminus V(H)$ into subsets with respect to $H$ as follows:
for any $X\subseteq V(H)$, we denote by $S(X)$ the set of vertices
in $V\setminus V(H)$ that have $X$ as their \ben{neighbors in $V(H)$}, i.e.,
\[S(X)=\{v\in V\setminus V(H): N_{V(H)}(v)=X\}.\]
For $0\le m\le |V(H)|$, we denote by $S_m$ the set of vertices in $V\setminus V(H)$ that have exactly $m$
neighbors in $V(H)$. Note that $S_m=\bigcup_{X\subseteq V(H): |X|=m}S(X)$.

Let $G$ be a $(P_5,W_4)$-free graph and $C=v_1,v_2,v_3,v_4,v_5$ be an induced $C_5$ in $G$.
We partition $V\setminus C$ with respect to $C$ as follows, where all indices below are modulo five.

$S_0=\{v \in V\backslash V(C): N_C(v)=\emptyset\},$

$S_2(i)=\{v \in V\backslash V(C): N_C(v)=\{v_{i-1},v_{i+1}\}\},$

$S_3^1(i)=\{v \in V\backslash V(C): N_C(v)=\{v_{i-1},v_i,v_{i+1}\}\},$

$S_3^2(i)=\{v \in V\backslash V(C): N_C(v)=\{v_{i-2},v_i,v_{i+2}\}\},$

$S_4(i)=\{v \in V\backslash V(C): N_C(v)=\{v_{i-2},v_{i-1},v_{i+1},v_{i+2}\}\},$

$S_5=\{v \in V\backslash V(C): N_C(v)=V(C)\}.$

Let $S_3^1=\bigcup_{i=1}^5S_3^1(i)$ and $S_3^2=\bigcup_{i=1}^5S_3^2(i)$. \ben{We also overload the notation $S_2$ in the remainder of this section to let $S_2=\bigcup_{i=1}^5S_2(i)$}.
Since $G$ is $P_5$-free \ben{and any vertex adjacent to only one or two consecutive vertices on $C$ will clearly result in an induced $P_5$}, we have that

$$ V(G)=S_0\cup S_2\cup S_3^1\cup S_3^2\cup S_4\cup S_5.$$

We now prove a number of useful properties of these sets using the fact that $G$ is $(P_5,W_4)$-free. All properties are proved for $i=1$
due to symmetry.

 \begin{enumerate}[label=\bfseries (\arabic*)]

         \item $S_0$ is anticomplete to $S_2 \cup S_3^1$.\label{s0s2s31}

         Let $u \in S_0$, $v \in S_2(1) \cup S_3^1(1)$. If $uv \in E(G)$, then $\{u,v,v_2,v_3,v_4\}$ is an induced $P_5$ as in \autoref{fig:P5froms0s2s31}. 

        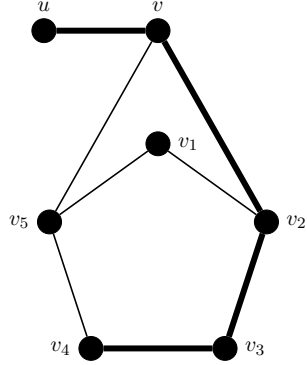
\begin{figure}[h!]
            \centering
            \def\c{0.75}
            \def\r{2}
             \scalebox{\c}{
            \begin{tikzpicture}
                \GraphInit[vstyle=Classic]
                \Vertex[L=\hbox{$v_1$},x=\r*0.0cm,y=\r*1.0cm]{v1}
                               \Vertex[L=\hbox{$v_5$},Lpos=180,x=\r*-0.9510565162951535cm,y=\r*0.30901699437494745cm]{v5}
                
                \Vertex[L=\hbox{$v_2$},x=\r*0.9510565162951535cm,y=\r*0.30901699437494745cm]{v2}
                \Vertex[L=\hbox{$v_3$},x=\r*0.5877852522924731cm,y=\r*-0.8090169943749475cm]{v3}                \Vertex[L=\hbox{$v_4$},Lpos=180,x=\r*-0.5877852522924731cm,y=\r*-0.8090169943749475cm]{v4} 
                \Vertex[L=\hbox{$v$},Lpos=90,x=\r*0.0cm,y=\r*2.0cm]{v}
                \Vertex[L=\hbox{$u$},Lpos=90,x=\r*-1.0cm,y=\r*2.0cm]{u}

                \Edge[](v1)(v5)
                \Edge[](v1)(v2)
                \Edge[](v4)(v5)
                \Edge[](v)(v5)

                \SetUpEdge[lw=3pt]
                \Edge[](u)(v)
                \Edge[](v)(v2)
                \Edge[](v2)(v3)
                \Edge[](v3)(v4)
            \end{tikzpicture}
            }
        \caption{The induced $P_5$ in bold from the proof of \ref{s0s2s31}.}
            \label{fig:P5froms0s2s31}
        \end{figure}
          \item For each $1 \leq i\leq 5$, $S_3^2(i)\cup S_4(i)$ is not mixed on any edge of $S_0$.\label{s0s32s4}

          Let $uu'$ be an edge of $S_0$, and $v \in S_3^2(1) \cup S_4(2)$. If $uv \in E(G)$ and $uv' \notin E(G)$, then $\{u',u,v,v_1,v_2\}$ is an induced $P_5$.

         \item  For each $1 \leq i \leq 5$, $S_2(i)$ is complete to $S_2(i+1)\cup S_2(i-1)\cup S_3^1(i+1) \cup S_3^1(i-1)$.\label{s2s312}

         Let $u \in S_2(1), v \in S_2(2)\cup S_3^1(2)$. If $uv \notin E(G)$, then $\{u,v_5,v_1,v,v_3\}$ is an induced $P_5$. By symmetry, $S_2(1)$ is complete to $S_2(5)\cup S_3^1(5)$.

         \item For each $1 \leq i \leq 5$, $S_2(i+2)\cup S_2(i-2) \cup S_3^2(i+1) \cup S_3^2(i-1)$  is not mixed on any edge of $S_2(i)$. \label{s21s23}

         Let $uu'$ be an edge of $S_2(1)$, and $v \in S_2(3) \cup S_3^2(2)$. If $uv \in E(G)$ and $u'v \notin E(G)$, then $\{u',u,v,v_4,v_3\}$ is an induced $P_5$ \ben{since every vertex in $S_2(3) \cup S_3^2(2)$ is adjacent to $v_4$ and nonadjacent to $v_3$}. By symmetry, $S_2(4) \cup S_3^2(5)$ is not mixed on any edge of $S_2(1)$.

         \item For each $1 \leq i \leq 5$, $S_2(i)$ is anticomplete to $S_3^1(i)$.\label{s21s31}

          Let $u \in S_2(1)$ and $v \in S_3^1(1)$. If $uv \in E(G)$, then $\{v,u,v_5,v_1,v_2\}$ is a $W_4$ as in \autoref{fig:W4froms21s31}. 
          \begin{figure}[h!]
            \centering
            \def\c{0.75}
            \def\r{2}
             \scalebox{\c}{
            \begin{tikzpicture}
                \GraphInit[vstyle=Classic]
                \Vertex[L=\hbox{$v_1$},Lpos=90,x=\r*0.0cm,y=\r*1.0cm]{v1}
                               \Vertex[L=\hbox{$v_5$},Lpos=180,x=\r*-0.9510565162951535cm,y=\r*0.30901699437494745cm]{v5}
                
                \Vertex[L=\hbox{$v_2$},x=\r*0.9510565162951535cm,y=\r*0.30901699437494745cm]{v2}
                \Vertex[L=\hbox{$v_3$},x=\r*0.5877852522924731cm,y=\r*-0.8090169943749475cm]{v3}                \Vertex[L=\hbox{$v_4$},Lpos=180,x=\r*-0.5877852522924731cm,y=\r*-0.8090169943749475cm]{v4} 
                \Vertex[L=\hbox{$u$},Lpos=270,x=\r*0.0cm,y=\r*-0.38196602cm]{u}
                \Vertex[L=\hbox{$v$},Lpos=45,x=\r*0.0cm,y=\r*0.30901699437494745cm]{v}

                \Edge[](v1)(v5)
                \Edge[](v1)(v2)
                \Edge[](v4)(v5)
                \Edge[](v2)(v3)
                \Edge[](v3)(v4)                
                
                \SetUpEdge[lw=3pt]
                \Edge[](u)(v)
                \Edge[](u)(v2)
                \Edge[](v)(v1)
                \Edge[](v)(v2)
                \Edge[](v)(v5)
                \Edge[](v1)(v5)
                \Edge[](v1)(v2)
                \Edge[](u)(v5)

            \end{tikzpicture}
            }
        \caption{The induced $W_4$ in bold from the proof of \ref{s21s31}.}
            \label{fig:W4froms21s31}
        \end{figure}

         \item For each $1 \leq i \leq 5$, $S_2(i)$ is anticomplete to $S_3^1(i+2) \cup S_3^1(i-2)$.\label{s2s313}

         Let $u \in S_2(1), v \in S_3^1(3)$. If $uv \in E(G)$, then $\{v_1,v_5,u,v,v_3\}$ is an induced $P_5$. By symmetry, $S_2(1)$ is anticomplete to $S_3^1(4)$.

         \item For each $1 \leq i \leq 5$, $S_2(i)$ is complete to $S_3^2(i)$.\label{s2(1)s32(1)}

         Let $u \in S_2(1)$ and $v \in S_3^2(1)$. If $uv \notin E(G)$, then $\{u,v_5,v_1,v,v_3\}$ is an induced $P_5$.

         \item For each $1 \leq i \leq 5$, $S_3^2(i+2) \cup S_3^2(i-2)$ is not mixed on any edge of $S_2(i)$. \label{s2(1)s32(3)}

          Let $uu'$ be an edge of $S_2(1)$, and $v \in S_3^2(3)$. If $uv \in E(G)$ and $u'v \notin E(G)$, then $\{u',u,v,v_3,v_4\}$ is an induced $P_5$. By symmetry, $S_3^2(5)$ is not mixed on any edge of $S_2(1)$.

         \item For each $1 \leq i \leq 5$ , let $u,u' \in S_2(i)$ with $uu' \notin E(G)$. Then every vertex in $\iain{S_3^2(i+1) \cup S_3^2(i-1) \cup}S_4(i) \cup S_4(i+2) \cup S_4(i-2) \cup S_5$ is  not \ben{complete to $\{u,u'\}$.}\label{s2(1)s4s5}

             Let $v \in \iain{S_3^2(5) \cup S_3^2(2) \cup}S_4(1) \cup S_4(3) \cup S_4(4) \cup S_5$. If $uv,u'v  \in E(G)$, then $\{v,u, v_2,u',v_5\}$ is a $W_4$ \ben{since $v$ is adjacent to $v_2$ and $v_5$}.

             \item For each $1 \leq i \leq 5$, $S_2(i)$ is complete to $S_4(i+1) \cup S_4(i-1)$.\label{s2(1)s4(2)}

             Let $u \in S_2(1)$ and $ v \in S_4(2)$. If $uv \notin E(G)$, then $\{u,v_2,v_1,v,v_4\}$ is an induced $P_5$.
             By symmetry, $S_2(1)$ is complete to  $S_4(5)$.

             \item For each $1 \leq i \leq 5$ , let $u,u' \in S_3^2(i)$ with $uu' \notin E(G)$. Then every vertex in $S_2(i+2)\cup S_2(i-2)$ is  not adjacent to at least one of $u$  and $u'$.\label{s32(1)s2(3)s2(4)}

                 Let $ v \in S_2(3)$. If $uv,u'v  \in E(G)$, then \iain{$\{v_4, u, v_3,u',v\}$} is a $W_4$ as in \autoref{fig:W4froms32(1)s2(3)s2(4)}. \iain{By symmetry, if $ v \in S_2(4)$ and $uv,u'v  \in E(G)$ then $\{v_3, u, v_4,u',v\}$ is a $W_4$.} 

            \begin{figure}[h!]
            \centering
            \def\c{0.75}
            \def\r{2.5}
             \scalebox{\c}{
            \begin{tikzpicture}
                \GraphInit[vstyle=Classic]
                \Vertex[L=\hbox{$v_1$},Lpos=90,x=\r*0.0cm,y=\r*1.0cm]{v1}
                               \Vertex[L=\hbox{$v_5$},Lpos=180,x=\r*-0.9510565162951535cm,y=\r*0.30901699437494745cm]{v5}
                
                \Vertex[L=\hbox{$v_2$},x=\r*0.9510565162951535cm,y=\r*0.30901699437494745cm]{v2}
                \Vertex[L=\hbox{$v_3$},Lpos=-15,x=\r*0.5877852522924731cm,y=\r*-0.8090169943749475cm]{v3}                \Vertex[L=\hbox{$v_4$},Lpos=180,x=\r*-0.5877852522924731cm,y=\r*-0.8090169943749475cm]{v4} 
                \Vertex[L=\hbox{$u$},Lpos=135,x=\r*-.25cm,y=\r*-0.38196602cm]{u}
                 \Vertex[L=\hbox{$u'$},x=\r*1.32cm,y=\r*-0.38196602cm]{uprime}
    
                \Vertex[L=\hbox{$v$},Lpos=90,x=\r*0.5877852522924731cm,y=\r*0cm]{v}

                \Edge[](v1)(v5)
                \Edge[](v1)(v2)
                \Edge[](v4)(v5)
                \Edge[](v2)(v3)
                \Edge[](v3)(v4)                
                \Edge[](v1)(v5)
                \Edge[](v1)(v2)
                \Edge[](u)(v1)
                \Edge[](uprime)(v1)
                \Edge[](v)(v2)
                \SetUpEdge[lw=3pt]
                \Edge[](u)(v)
                \Edge[](uprime)(v)
                
                \Edge[](u)(v3)
                \Edge[](u)(v4)
                
                \Edge[](uprime)(v3)
                \Edge[](uprime)(v4)
                \Edge[](v3)(v4)
                    
                \Edge[](v)(v4)
           
                
                %
            \end{tikzpicture}
            }
        \caption{An induced $W_4$ in bold from the proof of \ref{s32(1)s2(3)s2(4)}.}
            \label{fig:W4froms32(1)s2(3)s2(4)}
        \end{figure}
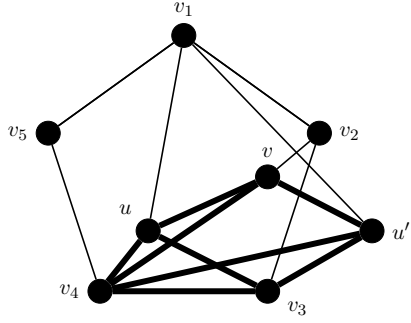

             \item $S_3^2(i)$ is complete to $S_3^1(i)$.\label{s32(1)s31(1)}

              Let $u \in S_3^2(1)$ and $v \in S_3^1(1)$. If $uv \notin E(G)$, then $\{v_2,v,v_5,v_4,u\}$ is an induced $P_5$.

              \item $S_3^2(i)$ is complete to $S_3^2(i+1) \cup S_3^2(i-1)$.\label{s32(1)s32(2)}

             Let $u \in S_3^2(1)$ and $v \in S_3^2(2)$. If $uv \notin E(G)$, then $\{u,v_3,v_2,v, v_5\}$ is an induced $P_5$.
               By symmetry, $S_3^2(1)$ is complete to  $S_3^2(5)$.

              \item $S_3^2(i)$ is complete to $S_3^1(i+2) \cup S_3^1(i-2)$.\label{s32s32(3)s32(4)}

             Let $u \in S_3^2(1)$ and $v \in S_3^1(3)$. If $uv \notin E(G)$, then $\{v,v_3,u,v_1,v_5\}$ is an induced $P_5$.
               By symmetry, $S_3^2(1)$ is complete to  $S_3^1(4)$.

              \item For each $1 \leq i \leq 5$ , let $u,u' \in S_3^2(i)$ with $uu' \notin E(G)$. Then every vertex in $S_2(i+1)\cup S_2(i-1)\cup S_3^1(i+1) \cup S_3^1(i-1) \cup S_3^2(i+2)\cup S_3^2(i-2)\cup (S_4\setminus S_4(i)) \cup S_5$ is  not adjacent to at least one of $u$  and $u'$.\label{s32s5s4}

             Let $v \in S_2(2)\cup S_3^1(2)\cup S_3^2(3) \cup S_4(2) \cup S_4(4) \cup S_5$. If $uv,u'v  \in E(G)$, then \iain{$\{v,u,v_1, u',v_3\}$} is a\ben{n induced{}} $W_4$ \ben{since $v$ is adjacent to $v_1$ and $v_3$}.
             By symmetry, every vertex in $S_2(5) \cup S_3^1(5) \cup S_3^2(4)\cup S_4(3) \cup S_4(5)$ is  not adjacent to at least one of $u$  and $u'$.

             \item For each $1 \le i \le 5$, $S_3^1(i)$, $S_4(i)$ and $S_5$ is a clique, respectively.\label{s31s4s5}

             Let $u, v \in S_3^1(1)$, $S_4(4)$ or $S_5$. If $uv \notin E(G)$, then \iain{$\{v_1,u,v_2,v,v_5\}$} is a $W_4$.

             \item For each $1 \le i \le 5$, $S_2(i)$ and $S_3^2(i)$ are $P_3$-free, respectively. \label{s2s32:p3-free}

             Let $uvw$ be a $P_3$ in $S_2(1)$ or $S_3^2(2)$, then \iain{$\{v,u,\ben{v_2},w,v_5\}$} is a $W_4$ as in \autoref{fig:W4froms2s32:p3-free}. 

            \begin{figure}[h!]
            \centering
            \def\c{0.75}
            \def\r{2}
             \scalebox{\c}{
            \begin{tikzpicture}
                \GraphInit[vstyle=Classic]
                \Vertex[L=\hbox{$v_1$},Lpos=270,x=\r*0.0cm,y=\r*1.0cm]{v1}
                               \Vertex[L=\hbox{$v_5$},Lpos=180,x=\r*-0.9510565162951535cm,y=\r*0.30901699437494745cm]{v5}
                
                \Vertex[L=\hbox{$v_2$},x=\r*0.9510565162951535cm,y=\r*0.30901699437494745cm]{v2}
                \Vertex[L=\hbox{$v_3$},x=\r*0.5877852522924731cm,y=\r*-0.8090169943749475cm]{v3}                \Vertex[L=\hbox{$v_4$},Lpos=180,x=\r*-0.5877852522924731cm,y=\r*-0.8090169943749475cm]{v4} 
                \Vertex[L=\hbox{$u$},Lpos=90,x=\r*-0.75cm,y=\r*2cm]{u}
                \Vertex[L=\hbox{$v$},Lpos=270,x=\r*0.0cm,y=\r*-0.2cm]{v}
                \Vertex[L=\hbox{$w$},Lpos=90,x=\r*0.75cm,y=\r*2cm]{w}

                \Edge[](v1)(v5)
                \Edge[](v1)(v2)
                \Edge[](v4)(v5)
                \Edge[](v)(v5)
                \Edge[](v2)(v3)
                \Edge[](v3)(v4)
                \SetUpEdge[lw=3pt]
                \Edge[](u)(v)
                \Edge[](v)(w)
                \Edge[](v)(v2)
                \Edge[](v)(v5)
                \Edge[](u)(v2)
                \Edge[](u)(v5)
                \Edge[](w)(v2)
                \Edge[](w)(v5)
                \SetUpEdge[lw=1pt,style=dashed]
                \Edge[](u)(v4)
                \Edge[](v)(v4)
                \Edge[](w)(v4)
            \end{tikzpicture}
            }
        \caption{The induced $W_4$ in bold from the proof of \ref{s2s32:p3-free} where the dashed edges are present whenever $\{u,v,w\}\subseteq S_3^2(1)$.}
            \label{fig:W4froms2s32:p3-free}
        \end{figure}
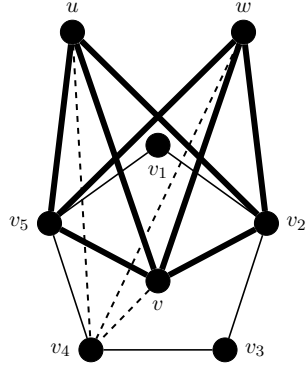

         \end{enumerate}

    \ben{With these structural properties, we are now ready to proceed with the proof of \autoref{lem:c5}.}

  \begin{proof}[Proof of \autoref{lem:c5}]
         We prove the lemma by induction on $k$. If $1 \le k \le 4$, there are finitely many $k$-vertex-critical $(P_5,W_4)$-free
       graphs by \autoref{4-vertex-cri}. In the following, we assume that $k \ge 5$ and there are finitely many $i$-vertex-critical $(P_5,W_4)$-free
       graphs for $i \le {k-1}$. 

  Let $G$ be a $k$-vertex-critical $(P_5,W_4)$-free graph such that $C=v_1,v_2,v_3,v_4,v_5$ induces a $C_5$ in $G$. We partition $V(G)$ with respect to $C$ \ben{as above in this section}. Since $G$ is $K_k$-free, combined with~\ref{s31s4s5}, we have that $S_3^1(i)$, $S_4(i)$ and $S_5$ are $K_{k-2}$-free, respectively.
So $|S_3^1(i)| \le k-3$, $|S_4(i)| \le k-3$ and $|S_5| \le k-3$. Therefore, $|S_3^1| \le 5k-15$, $|S_4| \le 5k-15$ and $|S_5| \le 5k-15$.
In the following, we will bound $S_0$, $S_2$ and $S_3^2$.

  We first bound $S_3^2(i)$.  We only give the proof of $i=1$ due to symmetry.

  \begin{claim}\label{s32(i):bounded}
  For each $1 \le i \le 5$, $S_3^2(i)$ is bounded.
  \end{claim}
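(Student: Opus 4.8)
The plan is to show $S_3^2(1)$ is bounded by arguing that it decomposes into a bounded number of homogeneous-type pieces, each of which induces a graph of bounded chromatic number, and then invoke the inductive hypothesis via Lemmas~\ref{lem:homoge} and~\ref{lem:XY}. The first observation is that by property~\ref{s2s32:p3-free}, $G[S_3^2(1)]$ is $P_3$-free, hence a disjoint union of cliques; since $G$ is $K_k$-free and $v_1,v_3,v_5\notin$ wait — each vertex of $S_3^2(1)$ has three neighbors on $C$ that form a stable set, so a clique in $S_3^2(1)$ together with any one of $v_2,v_4$ (nonneighbors) doesn't immediately extend; but a clique $Q\subseteq S_3^2(1)$ together with $v_1$ is a clique (every vertex of $S_3^2(1)$ is adjacent to $v_1$), so $|Q|\le k-2$. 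Thus $G[S_3^2(1)]$ is a disjoint union of cliques each of size at most $k-2$, so $\chi(G[S_3^2(1)])\le k-2$. The real work is to bound the \emph{number} of these clique-components.

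**Next I would** analyze the interaction of distinct clique-components of $S_3^2(1)$ with the rest of the graph. Let $A$ and $B$ be two such components. They are nonadjacent to each other (distinct components), and I would like to apply Lemma~\ref{lem:XY} with $X,Y$ chosen among these components. For that I need to understand $N(A)$. The neighbors of $A$ outside $S_3^2(1)$ are controlled by the structural properties: properties~\ref{s2(1)s32(1)}, \ref{s32(1)s31(1)}, \ref{s32(1)s32(2)}, \ref{s32s32(3)s32(4)}, \ref{s32s5s4} show that $S_3^2(1)$ is complete to $S_2(1)$, $S_3^1(1)$, $S_3^2(2)\cup S_3^2(5)$, $S_3^1(3)\cup S_3^1(4)$, while property~\ref{s0s32s4} says $S_3^2(1)$ is not mixed on any edge of $S_0$ (so by Lemma~\ref{lem:edgemixhomo} each vertex of $S_3^2(1)$ is complete or anticomplete to each component of $G[S_0]$), and property~\ref{s2(1)s32(3)} says $S_3^2(3)\cup S_3^2(5)$... wait that's the other direction. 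The key point is that most of these ``completeness'' statements mean that $N(A)$ is essentially the same for all components $A$, \emph{except} for the parts of $S_2(3)\cup S_2(4)$, $S_0$, and $S_3^2(1)$ itself. So I would partition $S_3^2(1)$ further according to which components of $G[S_0]$ each vertex sees and its adjacency pattern to $S_2(3)\cup S_2(4)$ (using properties~\ref{s32(1)s2(3)s2(4)} and its symmetric partners to control this). Within each such refined piece, the components become mutually ``comparable'' in the sense of Lemma~\ref{lem:XY}: two components $A,B$ of a refined piece are anticomplete, have $\chi(G[A])\le\chi(G[B])$ (reorder), and $Y=B$ complete to $N(A)$ — contradicting vertex-criticality unless there's only one component per refined piece. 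Since $S_0$, $S_2$ will themselves be bounded (they're handled later in the proof), the number of refined pieces is bounded, hence $|S_3^2(1)|$ is bounded.

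**The main obstacle** I anticipate is establishing that $Y=B$ is complete to $N(A)$ cleanly — the neighborhood $N(A)$ includes $v_1, v_2$... no, $v_2\notin N(A)$ since $S_3^2(1)$ vertices are adjacent to $v_1,v_3,v_5$ only; so $N(A)\cap V(C)=\{v_1,v_3,v_5\}$ and $B$ is indeed complete to these. The subtle parts of $N(A)$ are: vertices of $S_3^2(1)\setminus(A\cup B)$ adjacent to $A$ — these other components might see $A$ differently than $B$; vertices of $S_4\setminus S_4(1)$, $S_5$, $S_3^2(2)\cup S_3^2(5)$ — here properties~\ref{s32s5s4} and~\ref{s2(1)s4s5} give us partial control but only say ``not adjacent to at least one of $u,u'$'' for \emph{nonadjacent} $u,u'$ inside $S_3^2(1)$, which is exactly the cross-component situation, forcing these sets to be \emph{anticomplete} to the union of any two distinct components — a strong and useful fact. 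So actually the obstruction dissolves: the only troublesome neighbors left for $A$ are in $S_0$ and $S_2(3)\cup S_2(4)$, which is precisely what the refinement handles. I would therefore structure the claim's proof as: (i) $G[S_3^2(1)]$ is a disjoint union of cliques of bounded size; (ii) distinct components are anticomplete to $S_4\cup S_5\cup(S_3^2\setminus S_3^2(1))$ and to $S_3^1$ via the cited properties, and see the same vertices in $S_2(1)$; (iii) refine by adjacency to (the bounded sets) $S_0$ and $S_2(3)\cup S_2(4)$ plus clique-component structure; (iv) apply Lemma~\ref{lem:XY} to conclude each refined class is a single component; (v) count. The one genuine dependency is that $S_0$ and $S_2$ are bounded, which the surrounding proof supplies, so I would either prove this claim after those bounds or phrase it to use them.
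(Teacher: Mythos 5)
There is a genuine gap, and it is structural: your plan is circular. You propose to refine $S_3^2(1)$ according to adjacency to $S_0$ and $S_2(3)\cup S_2(4)$ and then argue ``the number of refined pieces is bounded'' because ``$S_0$, $S_2$ will themselves be bounded (they're handled later in the proof).'' But in the paper the order of dependencies is the reverse: the bound on $S_0$ (via \autoref{s0:homo}, \autoref{lem:homoge} and a pigeonhole over $2^{|S_3^2\cup S_4\cup S_5|}$ neighbourhoods) and the bound on $S_2(i)$ (whose count of components is controlled by $2^{|S_3^2\setminus S_3^2(1)\cup L|}$) both \emph{use} the present claim that $S_3^2$ is bounded. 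So you cannot ``prove this claim after those bounds or phrase it to use them'' — there is no noncircular ordering in which your step (iii)/(v) goes through. The paper's proof deliberately avoids touching $S_0$ and $S_2$ as bounded quantities: it sets $S=S_3^1(2)\cup S_3^1(5)\cup(S_4\setminus S_4(1))\cup S_5$ (the sets already bounded via \ref{s31s4s5}), bounds the part $L$ of $S_3^2(1)$ with neighbours in $S$ by the clique-neighbourhood argument from \ref{s32s5s4}, and then bounds the number of components of $R=S_3^2(1)\setminus L$ by (a) showing at most $k-1$ components can be nonhomogeneous, via a long case analysis proving that the mixing vertices (which lie in $S_0\cup S_2(3)\cup S_2(4)\cup S_4(1)$, all possibly unbounded at this stage) would otherwise pairwise form a clique, and (b) a pigeonhole over neighbourhoods in $S_3^1\cup S_4\cup S_5$ combined with a $K_k$ contradiction built from private neighbours in $S_0\cup S_2(3)\cup S_2(4)\cup S_3^2(3)\cup S_3^2(4)$.

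A second, more local error: you claim property \ref{s32s5s4} forces $S_4\cup S_5\cup(S_3^2\setminus S_3^2(1))$ to be \emph{anticomplete} to the union of any two distinct components. It does not; it only says such a vertex cannot be adjacent to both members of a nonadjacent pair, i.e.\ its neighbourhood in $S_3^2(1)$ is a clique and hence meets at most one component. A vertex of $S_5$ (say) adjacent to all of $A$ and none of $B$ then lies in $N(A)\setminus N(B)$ and blocks the application of \autoref{lem:XY} with $X=A$, $Y=B$ — this is exactly why the paper peels off $L$ first. Moreover $S_4(1)$ is excluded from \ref{s32s5s4} altogether and genuinely can be mixed on components of $S_3^2(1)$; handling it (together with $S_0$, $S_2(3)$, $S_2(4)$) is the bulk of the paper's argument and is absent from your outline. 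Your opening observations (that $S_3^2(1)$ is a disjoint union of cliques of size at most $k-2$, and that $S_2(1)$, $S_3^1(1)$, $S_3^1(3)\cup S_3^1(4)$, $S_3^2(2)\cup S_3^2(5)$ are complete to it) are correct and match the paper, but the core of the counting argument is missing and the proposed substitute does not work.
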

  \begin{proof}
   $S=  S_3^1(2) \cup S_3^1(5) \cup (S_4 \setminus{ S_4(1)}) \cup S_5$, \red{ that is, $S$ is the union of all sets we have bounded previously that can have neighbors in $S_3^2(1)$}. Let $L = \{u| u \in S_3^2(1)\text{ and } u \text{ has a neighbor in } S\}$
  and $R = S_3^2(1) \setminus L$. Let $v \in S$ and $M_v=N(v) \cap S_3^2(1)$. By \ref{s32s5s4}, $M_v$ is a clique, so $|M_v| \le k-\ben{3}$\ben{, otherwise $k-2$ vertices of $M_v$ together with $v_3$ and $v_4$ would form a $K_k$}. Since $ L \subseteq \bigcup_{v \in  S}M_v$ and $S$ is bounded, $L$ is bounded \ben{by $|S|(k-3)$}. In the following, we will bound $R$. By \ref{s2s32:p3-free}, $S_3^2(1)$
  is $P_3$-free, so each component of $R$ is a clique. So, we only need to bound the number of components in $R$. 

  First, we will show that the number of nonhomogeneous components in $R$ is not more than $k-1$.
  Let $A_1$ and $A_2$ be two nonhomogeneous components in $R$. Then there exist $a_1, a_2 \in S_0 \cup S_2(3) \cup S_2(4) \cup S_4(1)$
  such that $a_i$ is mixed on $A_i$ where $i \in \{1,2\}$. \ben{By \autoref{lem:edgemixhomo}, we may assume $a_i$ is mixed on an edge of $A_i$.} Let $u_iw_i$ be an edge of $A_i$ and $u_ia_i \notin E(G)$, $w_ia_i \in E(G)$. \ben{If $a_i$ is mixed on $\{u_j,w_j\}$, then $\{a_i,u_i,w_i,u_j,w_j\}$ induces a $P_5$. Thus, we have the following remark that will be useful throughout the remaining proof.}
 
  \begin{rem}
      For $i,j \in \{1,2\}$, $a_i$ is complete or anticomplete to $\{u_j,w_j\}$ where $i \neq j$. \label{note:aicomporantiujwj}
  \end{rem}

  \red{From \autoref{note:aicomporantiujwj}, it follows that $a_1\neq a_2$. Moreover, if $a_1a_2 \notin E(G)$ and $a_2w_1 \not\in E(G)$ and $a_1w_2 \not\in E(G)$} then $\{a_1,w_1,v_1,w_2,a_2\}$ is an
  induced $P_5$ \red{since no vertex in $S$ is adjacent to $v_1$. Thus, we have another helpful remark.}

  \begin{rem}
      If $a_1a_2 \notin E(G)$, then $a_2w_1 \in E(G)$ or $a_1w_2 \in E(G)$. \label{note:a1a2nonadja2w1ora1w1edges}
  \end{rem}

    \ben{We now consider cases depending on which of the sets contain $a_1$ and $a_2$.}

    If $a_1 \in S_2(3) \cup S_2(4)$. Then $u_2a_1 \notin E(G)$ and $w_2a_1 \notin E(G)$ by \ref{s32(1)s2(3)s2(4)}. If $a_2 \in S_2(3) \cup S_2(4)$, then $a_2u_1 \notin E(G)$ and $a_2w_1 \notin E(G)$ by \ref{s32(1)s2(3)s2(4)}. So $a_1a_2 \in E(G)$ \ben{by the contrapositive of \autoref{note:a1a2nonadja2w1ora1w1edges}}, but now $\{u_1,w_1,a_1,a_2,w_2\}$ is an induced $P_5$.  If $a_2 \in S_0$, then $a_1a_2 \notin E(G)$ \ben{by \ref{s0s2s31}}.  So\ben{, by \autoref{note:a1a2nonadja2w1ora1w1edges}, $a_2w_1 \in E(G)$ or $a_1w_2\in E(G)$ and without loss of generality suppose} $a_2w_1 \in E(G)$. But now $\{u_2,w_2,a_2,w_1,a_1\}$ is an induced $P_5$.
    If $a_2 \in S_4(1)$, then $a_1a_2 \in E(G)$\ben{, o}therwise $\{u_2,w_2,a_2,v_2,a_1\}$ or $\{u_2,w_2,a_2,v_5,a_1\}$ is an induced $P_5$. Then $a_2w_1 \in E(G)$ \ben{or $a_2u_1 \in E(G)$}, otherwise \ben{$\{u_1,w_1,a_1,a_2,v_3\}$ induces $P_5$ if $a_1\in S_2(3)$ or $\{u_1,w_1,a_1,a_2,v_4\}$ induces $P_5$ if $a_1\in S_2(4)$. So, by \autoref{note:aicomporantiujwj}, $a_2$ is complete to $\{u_1,w_1\}$, but this contradicts \ref{s32s5s4}}.
    So $a_1 \notin S_2(3) \cup S_2(4)$. By symmetry, $a_2 \notin S_2(3) \cup S_2(4)$.

    \ben{We now show that in the remaining cases for the sets containing $a_1$ and $a_2$ (i.e., $S_0$ or $S_4(1)$) that we always have $u_1a_2, w_1a_2,u_2a_1,w_2a_1 \in E(G)$.}
    
    If $a_1,a_2 \in S_0$. If $a_1a_2 \notin E(G)$\ben{, then $a_1w_2\in E(G)$ or $a_2w_1\in E(G)$ otherwise $\{a_1,w_1,v_1,w_2,a_2\}$ is an induced $P_5$}. If $a_1w_2 \in E(G)$ and $a_2w_1 \notin E(G)$, then $a_2u_1 \notin E(G)$ \ben{by \autoref{note:aicomporantiujwj}}.
 Then $\{a_2,w_2,a_1,w_1,u_1\}$ is an induced $P_5$.
  If $a_1w_2 \in E(G)$ and $a_2w_1 \in E(G)$, then $u_1a_2 \in E(G)$ and $u_2a_1 \in E(G)$ \ben{by \autoref{note:aicomporantiujwj}}. Then $\{a_1,u_2,v_1,u_1,a_2\}$
  is an induced $P_5$. So $a_1a_2 \in E(G)$, then $u_1a_2, w_1a_2,u_2a_1,w_2a_1 \in E(G)$ by \ref{s0s32s4}.

   If $a_1,a_2 \in S_4(1)$. If $a_1a_2 \notin E(G)$\ben{, then $a_1w_2\in E(G)$ or $a_2w_1\in E(G)$ otherwise $\{a_1,w_1,v_1,w_2,a_2\}$ is an induced $P_5$}. If $a_1w_2 \in E(G)$ and $a_2w_1 \notin E(G)$, then $\{a_2,w_2,a_1,w_1,u_1\}$ is an induced $P_5$. If $a_1w_2 \in E(G)$
  and $a_2w_1 \in E(G)$, then $u_1a_2 \in E(G)$ and $u_2a_1 \in E(G)$ \ben{by \autoref{note:aicomporantiujwj}}. Then $\{a_1,u_2,v_1,u_1,a_2\}$ is an induced $P_5$. So $a_1a_2 \in E(G)$. If $a_1w_2 \notin E(G)$ and $a_2w_1 \notin E(G)$, then $\{w_2,a_2,a_1,w_1,u_1\}$ is an induced $P_5$. \ben{Thus, we must have either $a_1w_2 \notin E(G)$ or $a_2w_1 \notin E(G)$. Without loss of generality, suppose }$a_1w_2 \notin E(G)$
  and $a_2w_1 \in E(G)$\ben{. Then} $u_1a_2 \in E(G)$ and $u_2a_1 \notin E(G)$ \ben{again by \autoref{note:aicomporantiujwj}}.
  Then $\{u_2,v_1,u_1,a_2,a_1\}$ is an induced $P_5$. If $a_1w_2 \in E(G)$ and $a_2w_1 \in E(G)$, then $a_1u_2 \in E(G)$ and $a_2u_1 \in E(G)$ \ben{by \autoref{note:aicomporantiujwj}}.

  If $a_1 \in S_0$ and $a_2 \in S_4(1)$. \ben{First suppose that} $a_1a_2 \notin E(G)$.  If $a_2w_1 \in E(G)$ and $a_1w_2 \notin E(G)$, then $\{a_1,w_1,a_2,w_2,u_2\}$ is an induced $P_5$.
  If $a_2w_1 \notin E(G)$ and $a_1w_2 \in E(G)$, then $\{a_2,w_2,a_1,w_1,u_1\}$ is an induced $P_5$.
  If $a_1w_2 \in E(G)$ and $a_2w_1 \in E(G)$, then $a_1u_2 \in E(G)$ and $a_2u_1 \in E(G)$. Then $\{a_1,u_2,v_1,u_1,a_2\}$
  is an induced $P_5$. So \ben{we must have} $a_1a_2 \in E(G)$.
  If $a_1w_2 \notin E(G)$ and $a_2w_1 \notin E(G)$, then $a_1u_2 \notin E(G)$ and $a_2u_1 \notin E(G)$. Then $\{u_2,w_2,a_2,a_1,w_1\}$ is an induced $P_5$. If $a_2w_1 \notin E(G)$
  and $a_1w_2  \in E(G)$, then $\{u_1,w_1,a_1,a_2,v_2\}$ is an induced $P_5$. If $a_2w_1 \in E(G)$ and $a_1w_2 \notin E(G)$, then $\{u_2,v_1,u_1,a_2,a_1\}$ is an induced $P_5$. If $a_2w_1 \in E(G)$ and $a_1w_2 \in E(G)$, then $a_1u_2 \in E(G)$
  and $a_2u_1 \in E(G)$.

  So we have $a_1,a_2 \in S_0 \cup S_4(1)$, $a_1a_2 \in E(G)$ and $a_i$ is complete to $u_jw_j$ where $i \neq j$ and $i,j \in \{1,2\}$.
  Thus, the number of nonhomogeneous components in $R$ is not more than $k-1$, otherwise $\{a_i|a_i \in S_0  \cup S_4(1), a_i \text{ is mixed on } A_i\}$ induces a $K_k$ where $A_i$ is a nonhomogeneous component in $R$.

  Next, we will bound the number of homogeneous components in $R$. \ben{Note that if there is only one homogeneous component, then $|R|\le k-3$ since $R$ is a clique by \ref{s2s32:p3-free}. If $R$ has more than one component, then each must have neighbors outside of the set $\{v_1,v_3,v_4\}$ or else the two components contradict \autoref{lem:XY}.}
  We will first show that the number of $K_1$-components in $R$ is
   not more than $(k\iain{+1})\cdot2^{|S_3^1\cup S_4 \cup S_5|}$. Suppose not.  Then there are $k\iain{+2}$ $K_1$-components in $R$
  having the same neighbors in $S_3^1 \cup S_4 \cup S_5$ \ben{by the Pigeonhole Principle}.

  \ben{Let $\iain{U_{k+2}}=\{u_1,u_2,\ldots,\iain{u_{k+2}}\}$ be the set of $k{\wen{+2}}$ $K_1$-components of $R$ having the same neighbors in $S_3^1 \cup S_4 \cup S_5$.} \iain{We will now show that at most one vertex in $U_{k+2}$ has neighbors in $S_2(2)$ (and by symmetry $S_2(5)$).
  To show a contradiction, suppose that $u_{k+1},u_{k+2} \in U_{k+2}$ each have respective neighbors $u_{k+1}', u_{k+2}' \in S_2(2)$. By \ref{s32s5s4}, $u_{k+1}u_{k+2}', u_{k+2}u_{k+1}' \notin E(G)$. Furthermore $u_{k+1}'u_{k+2}' \in E(G)$ otherwise $\{u_{k+1}', u_{k+1}, v_4, u_{k+2}, u_{k+2}'\}$ induces a $P_5$. However, now $u_{k+1}$ and $u_{k+2}$ are mixed on the edge $u_{k+1}'u_{k+2}'$ which contradicts \ref{s21s23}. Therefore at most one vertex in $U_{k+2}$ has neighbors in $S_2(2)$. By symmetry at most one vertex in $U_{k+2}$ has neighbors in $S_2(5)$. Thus we may assume at least $k$ vertices in $U_{k+2}$ are anticomplete to $S_2(2) \cup S_2(5)$. Without loss of generality let $U=\{u_1,u_2,\ldots,u_k\}$ be anticomplete to $S_2(2) \cup S_2(5)$. Recall}
  \ben{ that each vertex in $U$ is nonadjacent, so each must have distinct neighbors or else be comparable vertices and contradict \autoref{lem:XY}. So, there exist $u_i' \in N(u_i)\setminus N(u_j)$ and $u_j' \in N(u_j)\setminus N(u_i)$.  Since $u_i$ and $u_j$ have the same neighbors in $S_3^1 \cup S_4 \cup S_5$ by assumption, it must be that $u_i',u_j' \in S_0\cup S_2\cup S_3^2$. However,  $S_2(1)$  and $S_3^2(2)\cup S_3^2(5)$  are complete to $\{u_i,u_j\}$ by \ref{s2(1)s32(1)} and \ref{s32(1)s32(2)}, respectively.  Therefore, we must have $u_i',u_j' \in S_0 \cup S_2(3) \cup S_2(4) \cup S_3^2(3) \cup S_3^2(4)$. }

  \ben{We will now show that $u_i'u_j'\in E(G)$. First note that if $\{v_1,v_3,v_4\}\not\subseteq N(u_i')\cup N(u_j')$, then $\{u_i, u_i', v_h, u_j', u_j\}$ induces a $P_5$ for some $h\in\{1,3,4\}$ unless $u_i'u_j'\in E(G)$. Thus, the only remaining cases are: }
  
  \begin{itemize}
      \item  \ben{If $u_i'\in S_3^2(3)$ and $u_j'\in S_3^2(4)$, but then $u_i'u_j'\in E(G)$ by \ref{s32(1)s32(2)}.}
      \item \ben{If $u_i'\in S_3^2(3)$ and $u_j'\in S_2(3)$, but then $u_i'u_j'\in E(G)$ by \ref{s2(1)s32(1)}.}
      \item \ben{If $u_i'\in S_2(4)$ and $u_j'\in S_3^2(4)$, but then $u_i'u_j'\in E(G)$ again by \ref{s2(1)s32(1)}.}
  \end{itemize}

\noindent \ben{Thus, $u_i'u_j'\in E(G)$.}

\ben{Now let $u_m \in U$ for some $m \neq i,j$. In the following, we will show that $u_mu_i' \notin E(G)$ and $u_mu_j' \notin E(G)$. }

  If $u_i',u_j' \in \ben{S_2(3) \cup S_2(4) \cup S_3^2(3) \cup S_3^2(4)}$ \ben{(recall that $u_i',u_j'\not\in S_2(2)\cup S_2(5)$ from above)},
  then $u_mu_i' \notin E(G)$ and $u_mu_j'\notin E(G)$ by \ref{s32(1)s2(3)s2(4)} and \ref{s32s5s4}.
  If $u_i' \in S_0$, then $u_j' \in S_3^2(3) \cup S_3^2(4)$ \ben{from \ref{s0s2s31}}. By symmetry, we assume that $u_j' \in S_3^2(3)$. \ben{To show a contradiction, suppose} $u_mu_i' \in E(G)$. Since $u_i,u_m$
  are not comparable, there exists $u_i'' \in N(u_i)\setminus N(u_m)$. If $u_i'' \in S_0$, then $u_i'u_i'' \notin E(G)$.
  Otherwise, $\{u_i'',u_i',u_m,v_1,v_2\}$ is an induced $P_5$. Then $u_i''u_j' \in E(G)$, otherwise
$\{\ben{u_i'',u_i,v_4,v_5,u_j'}\}$ is an induced $P_5$. Then $\{u_i'',u_j',u_i',u_m,v_4\}$ is an induced $P_5$.
So $u_i'' \notin S_0$. If $u_i'' \in S_3^2(3) \cup S_3^2(4)$, then $u_ju_i'' \notin E(G)$ by \ref{s32s5s4}.
So \ben{$u_i''\in N(u_i)\setminus N(u_j)$ and $u_j' \in N(u_i)\setminus N(u_j)$, and by the previous paragraph, we must have}  $u_i''u_j'\in E(G)$. If $u_i'' \in S_3^2(3)$, then $\{u_i'',u_j',u_j,v_4,v_m\}$ is an induced $P_5$.
So $u_i'' \in S_3^2(4)$. If $u_i'u_i'' \notin E(G)$, then $\{u_m,u_i',u_j',u_i'',v_2\}$
is an induced $P_5$. So $u_i'u_i'' \in E(G)$. But now $\{u_j,v_3,u_m,u_i',u_i''\}$ is an induced $P_5$.

\ben{Now for each pair of vertices $u_i,u_j\in U$, they must have vertices $u_i'$ and $u_j'$ with $u_i'\in N(u_i)\setminus N(u_j)$, $u_j' \in N(u_i)\setminus N(u_j)$, and $u_i'u_j'\in E(G)$. Further, for any $u_m\in U$ with $m\not\in \{i,j\}$, we must have $u_i'u_m\not\in E(G)$ and $u_j'u_m\not\in E(G)$. So, for any $i\in \{1,2,\dots k\}$ we must have $u_i'\in N(u_i)\setminus \left(\bigcup_{m\neq j} N(u_m)\right)$ and for all $i\neq j$, $u_i'u_j'\in E(G)$. Thus the set 
$$\left\{u_i'|u_i' \in  N(u_i)\setminus \left(\bigcup_{m\neq j} N(u_m)\right), \text { and } i \in \{1,2,\ldots,k\}\right\}$$
induces a $K_k$, contradicting $G$ being $k$-vertex-critical and not $K_k$.}

So the number of $K_1$-components in \ben{$R$} is not more than $\iain{(k+1)}\cdot2^{|S_3^1\cup S_4 \cup S_5|}$. Recall that each component of $R$ is a clique.
It is very similar\footnote{\ben{To do this, let $C_1,C_2,\ldots C_{k+2}$ be $k+2$ distinct $K_i$-components of $R$. Let $U_{k+2}=\{u_1,u_2\dots u_{k+2}\}$ be a set such that $u_j\in C_j$ and there is a vertex $u_j'\in V(G)\setminus C_j$ such that $u_j'$ is not complete to $C_{\ell}$ for some $\ell\neq j$ (which must exist by \autoref{lem:XY}). Then everything follows the same way except for the need to add ``$\setminus C_j$'' in many places and thereby adding more notation to an already notation-heavy proof.}} to show that the number of homogeneous $K_i$-components in $R$ is
 not more than $\iain{(k+1)}\cdot2^{|S_3^1\cup S_4 \cup S_5|}$ where $2 \le i \le k-3$.

Thus, $S_3^2(1)$ is bounded.
  \end{proof}

  Next, we bound $S_0$.

         \begin{claim}\label{s0:homo}
         Let $A$ be a component of $S_0$, then $A$ is homogeneous.
         \end{claim}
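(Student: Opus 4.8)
The plan is to reduce the claim to a statement about single edges and then locate the only troublesome case. A component $A$ with $|A|\le 1$ is trivially homogeneous, so assume $|A|\ge 2$; by \autoref{lem:edgemixhomo} it then suffices to show that no vertex of $V(G)\setminus A$ is mixed on an edge of $A$. Suppose for contradiction that $v$ is mixed on an edge $xy$ of $A$, say $vx\in E(G)$ and $vy\notin E(G)$. Since $v$ has a neighbour in $S_0$, it cannot lie in $S_0$ (else $v\in A$); by \ref{s0s2s31} it is not in $S_2\cup S_3^1$; and by \ref{s0s32s4} it is not in $S_3^2\cup S_4$. Hence $v\in S_5$, so the content of the claim reduces to showing that no vertex of $S_5$ is mixed on an edge of $S_0$.

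For $v\in S_5$ the first step is to show that $v$ is anticomplete to $S_2\cup S_3^2\cup S_4$: a neighbour $w$ of $v$ in any of these sets is nonadjacent to some $v_i\in V(C)$ while adjacent to both $v_{i-1}$ and $v_{i+1}$ (for $S_2(i)$ take that $i$; for $S_3^2(i)$ and for $S_4(i)$ a suitable rotation works), so $\{v_{i-1},v_i,v_{i+1},w\}$ induces a $C_4$ to which $v$ is complete, giving an induced $W_4$. The second step exploits that for each $i$ the set $\{v_i,v,x,y\}$ induces a $P_4$ in the order $v_i,v,x,y$; hence any vertex $w$ adjacent to $v_i$ but to none of $v,x,y$ would make $\{w,v_i,v,x,y\}$ an induced $P_5$. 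Inspecting the vertices adjacent to $v_i$ (their positions on $C$ near $v_i$, together with: $S_5$ is a clique by \ref{s31s4s5}; $v$ is anticomplete to $S_2\cup S_3^2\cup S_4$ by step one; $S_2\cup S_3^1$ is anticomplete to $S_0$ by \ref{s0s2s31}; and $S_3^2\cup S_4$ is not mixed on $xy$ by \ref{s0s32s4}) and letting $i$ vary, one deduces that $S_2=\emptyset$, that $v$ is complete to $S_3^1$, and that $S_3^2\cup S_4$ is complete to $\{x,y\}$; running the same argument on every edge of $A$ with exactly one endpoint in $N(v)$ shows more generally that $S_3^2\cup S_4$ is complete to every such endpoint.

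It remains to derive a contradiction from $k$-vertex-criticality. Set $A^-=V(A)\setminus N(v)$, a nonempty proper subset of $A$ containing $y$ and anticomplete to $\{v\}$. For $z\in A^-$ every neighbour lies in $S_0\cup S_3^2\cup S_4\cup S_5$; its $S_0$-neighbours other than those in $A^-$ lie in $A\cap N(v)$, and its $S_5$-neighbours lie in $S_5\setminus\{v\}\subseteq N(v)$ (as $S_5$ is a clique), so $N(A^-)\subseteq N(v)\cup\bigl(N(A^-)\cap(S_3^2\cup S_4)\bigr)$. In the clean case where $A^-$ is a stable set anticomplete to $S_3^2\cup S_4$ we get $N(A^-)\subseteq N(v)$, and then $X=A^-$ and $Y=\{v\}$ contradict \autoref{lem:XY} (they are disjoint and anticomplete, $\chi(G[X])=1=\chi(G[Y])$, and $Y$ is complete to $N(X)$). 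The hard part will be the remaining cases: if $A^-$ contains an edge $pq$, then any $r\in A\cap N(v)$ adjacent to exactly one of $p,q$ would yield the induced $P_5$ $\{p,q,r,v,v_1\}$, so $A\cap N(v)$ is not mixed on any edge of $A^-$, and one must push this further; and if some $z\in A^-$ has a neighbour $w\in S_3^2\cup S_4$ one must use that $w$ is anticomplete to $v$ and complete to the boundary vertices of $A$ identified above, while $S_5$ is anticomplete to $S_3^2\cup S_4$, to force an induced $P_5$ or $W_4$ (or reduce to the clean case). I expect this final step — pinning down $A^-$ and its interaction with $S_3^2\cup S_4$ and $S_5$ — to be the main obstacle, precisely because the density of $W_4$ makes it delicate to induce a forbidden subgraph around the dominating vertex $v$; the rest is routine casework with the properties already established in this section.
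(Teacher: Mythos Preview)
Your reduction to the case $v\in S_5$ is correct, and your first step---that any $v\in S_5$ is anticomplete to $S_2\cup S_3^2\cup S_4$ because a neighbour there would sit on a $C_4$ with three consecutive cycle vertices, making $v$ the hub of a $W_4$---is both valid and genuinely useful. The derivations in your second step (that $S_2=\emptyset$, that $v$ is complete to $S_3^1$, and that $S_3^2\cup S_4$ is complete to $\{x,y\}$) are also correct. The gap is exactly where you identify it: your route via \autoref{lem:XY} does not close. You need $Y=\{v\}$ to be complete to $N(A^-)$, but you have just shown that every vertex of $S_3^2\cup S_4$ is adjacent to $y\in A^-$ while being nonadjacent to $v$; so your ``clean case'' forces $S_3^2\cup S_4=\emptyset$, and even then $A^-$ need not be stable. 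The variations you sketch (pushing edges inside $A^-$, chasing neighbours in $S_3^2\cup S_4$) do not obviously terminate.

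The missing ingredient is the standard fact, used by the paper, that a $k$-vertex-critical graph has no clique cutset. Once you invoke it the proof is immediate from what you already have: $N(A)\subseteq S_3^2\cup S_4\cup S_5$ by \ref{s0s2s31}, and $S_5$ is a clique by \ref{s31s4s5}, so if $N(A)\cap(S_3^2\cup S_4)=\emptyset$ then $N(A)$ is a clique separating $A$ from $C$. Hence there is $z\in(S_3^2\cup S_4)\cap N(A)$; by \ref{s0s32s4} and connectedness $z$ is complete to $A$, and by your first step $zv\notin E(G)$. Choosing $v_i$ with $zv_i\in E(G)$ and $zv_{i+2}\notin E(G)$ (always possible for $z\in S_3^2\cup S_4$), the set $\{y,z,v_i,v,v_{i+2}\}$ induces a $P_5$. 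The paper argues slightly differently---it shows $v$ must be \emph{complete} to $N(A)\cap(S_3^2\cup S_4)$ via a $P_5$, then finds two nonadjacent vertices in $N(A)$ and builds a $W_4$---but your anticompleteness observation in fact shortens that argument: it contradicts the paper's completeness conclusion as soon as $N(A)\cap(S_3^2\cup S_4)\neq\emptyset$.
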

         \begin{proof}
         By \ref{s0s2s31} and \ref{s0s32s4}, we only need to show that each vertex of $S_5$ is complete or
         anticomplete to $A$. \ben{From \autoref{lem:edgemixhomo} it suffices to show that $S_5$ is not mixed on any edge of $A$.} Suppose $w \in S_5$ and let $uv$ be an edge of $A$ such that $wu \in E(G)$
         and $wv \notin E(G)$. Since $G$ has no clique cutset and $S_5$ is a clique \ben{from \ref{s31s4s5}}, $A$ must have neighbors in $S_3^2 \cup S_4$.

         \ben{Suppose} there exists $x \in (S_3^2 \cup S_4) \cap N(A)$\ben{. By \ref{s0s32s4}, it follows that $x$ is adjacent to both $u$ and $v$.} \ben{If} $wx \notin E(G)$, then
         there exists $v_i \in V(C)$ such that $xv_i \in E(G)$ and $xv_{i+2} \notin E(G)$.
         Then $\{v,x,v_i,w,v_{i+2}\}$ is an induced $P_5$. So $w$ is complete to $N(A) \cap (S_3^2\cup S_4)$.
         Since $G$ has no clique cutset, there exist $x_1,x_2 \in N(A)$ such that $x_1x_2 \notin E(G)$.
         Then there must exist $v_i \in V(C)$ such that $v_i \in N(x_1)\cap N(x_2)\cap N(w)$.
         But now $\{w,v_i,x_1,x_2,u\}$ is a $W_4$. Thus $A$ is  homogeneous.
        \end{proof}

        \begin{claim}\label{s0:bounded}
        $S_0$ is bounded.
        \end{claim}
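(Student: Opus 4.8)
The plan is to first bound the order of each connected component of $S_0$ and then bound the number of such components. For the first part I would combine \autoref{s0:homo} with \autoref{lem:homoge} and the induction hypothesis; for the second part I would use \autoref{lem:XY}.

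First, let $A$ be a component of $G[S_0]$. By \autoref{s0:homo} the set $A$ is homogeneous, and since $V(C) \subseteq V(G) \setminus S_0$ we have $A \neq V(G)$, so $\chi(G[A]) \le \chi(G-v) < k$ for any $v \in V(C)$. Setting $m = \chi(G[A]) \le k-1$, \autoref{lem:homoge} applied to the homogeneous set $A$ shows that $A$ is an $m$-vertex-critical graph, and it is $(P_5,W_4)$-free as an induced subgraph of $G$. The induction hypothesis then bounds $|V(A)|$ by a constant $c = c(k)$, namely the largest order of an $i$-vertex-critical $(P_5,W_4)$-free graph over $i \le k-1$.

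Next, I would bound the number of components of $G[S_0]$ by their neighborhoods. By \ref{s0s2s31}, $S_0$ is anticomplete to $S_2 \cup S_3^1$, so $N(A) \subseteq S_3^2 \cup S_4 \cup S_5$ for every component $A$ of $G[S_0]$; since $S_3^2$ (by \autoref{s32(i):bounded}), $S_4$ and $S_5$ have already been bounded, the set $S_3^2 \cup S_4 \cup S_5$ has at most $2^{t}$ subsets for a bounded constant $t$. Suppose $A_1$ and $A_2$ are distinct components of $G[S_0]$ with $N(A_1) = N(A_2)$; they are anticomplete to each other, and after relabelling $\chi(G[A_1]) \le \chi(G[A_2])$. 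Since $A_2$ is homogeneous, every vertex of $N(A_2) = N(A_1)$ is complete to $A_2$, so $A_2$ is complete to $N(A_1)$, and hence $X = A_1$ and $Y = A_2$ satisfy all the hypotheses of \autoref{lem:XY}, contradicting that $G$ is $k$-vertex-critical. Thus distinct components of $G[S_0]$ have distinct neighborhoods in $S_3^2 \cup S_4 \cup S_5$, so there are at most $2^{t}$ of them, and therefore $|S_0| \le c \cdot 2^{t}$ is bounded.

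The argument is essentially a bookkeeping combination of results already established; the one point to get right is that the homogeneity of the components from \autoref{s0:homo} is exactly the hypothesis needed to invoke both \autoref{lem:homoge} (to promote each component to a vertex-critical graph of smaller chromatic number, where induction applies) and \autoref{lem:XY} (to force each admissible neighborhood to be realised by at most one component), so I do not anticipate a genuine obstacle beyond carefully checking that each $A \neq V(G)$ and that the $N(A)$ lie in the already-bounded sets.
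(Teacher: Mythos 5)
Your proposal is correct and follows essentially the same route as the paper: homogeneity of each component (\autoref{s0:homo}) plus \autoref{lem:homoge} and the induction hypothesis bound each component's order, and the Pigeonhole Principle together with \autoref{lem:XY} bounds the number of components by $2^{|S_3^2\cup S_4\cup S_5|}$. You have merely spelled out the details (distinct neighborhoods in $S_3^2\cup S_4\cup S_5$, the completeness condition needed for \autoref{lem:XY}) that the paper leaves implicit.
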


        \begin{proof}
        By \autoref{s0:homo} and \autoref{lem:homoge}, each component of $S_0$ is an $m$-vertex-critical
        $(P_5,W_4)$-free graph where $1 \le m \le k-1$. By the inductive hypothesis,
         it follows that there are finitely many $m$-vertex-critical$(P_5, W_4)$-free
          graph for $1 \le m \le k-1$. By the Pigeonhole Principle, \ben{there cannot be} more than $2^{|S_3^2 \cup S_4 \cup S_5|}$ \ben{without contradicting \autoref{lem:XY}}. Thus $S_0$ is bounded.
        \end{proof}

        Finally, we bound $S_2(i)$. We only give the proof of $i=1$ due to symmetry.

        \begin{claim}
        For each $ 1\le i \le 5$, $S_2(i)$ is bounded.\label{s2(i):bounded}
        \end{claim}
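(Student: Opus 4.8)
The plan is to reduce the claim to bounding the number of components of $G[S_2(1)]$, to split off a bounded ``fringe'', and to bound what remains using homogeneity together with \autoref{lem:XY}. Since $S_2(1)$ is $P_3$-free by \ref{s2s32:p3-free}, every component of $G[S_2(1)]$ is a clique; as each vertex of $S_2(1)$ is adjacent to $v_2$ and $G$ is $K_k$-free, such a clique has at most $k-2$ vertices, so it suffices to bound the number of components.

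First I would let $L$ be the set of vertices of $S_2(1)$ with a neighbor in $S_3^2(2)\cup S_3^2(5)\cup S_4(1)\cup S_4(3)\cup S_4(4)\cup S_5$ and put $R=S_2(1)\setminus L$. By \ref{s2(1)s4s5}, for every vertex $v$ of $S_3^2(2)\cup S_3^2(5)\cup S_4(1)\cup S_4(3)\cup S_4(4)\cup S_5$ the set $N(v)\cap S_2(1)$ is a clique (otherwise $v$ would be complete to a nonadjacent pair of $S_2(1)$), hence has at most $k-2$ vertices; since that union is bounded by the earlier claims, $L$ is bounded. A quick inspection of \ref{s0s2s31}, \ref{s2s312}, \ref{s21s23}, \ref{s21s31}, \ref{s2s313}, \ref{s2(1)s32(1)}, \ref{s2(1)s32(3)} and \ref{s2(1)s4(2)} shows that the only vertices of $V(G)$ that can be mixed on an edge of $S_2(1)$ lie in $S_4(1)\cup S_4(3)\cup S_4(4)\cup S_5$; as $R$ is anticomplete to this set, no vertex of $G$ is mixed on an edge of any component of $G[R]$, so by \autoref{lem:edgemixhomo} every component of $G[R]$ is a homogeneous clique on at most $k-2$ vertices. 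It therefore remains to bound the number of components of $G[R]$.

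By \autoref{lem:homoge} and the inductive hypothesis there are only finitely many isomorphism types of components of $G[R]$, and by \autoref{lem:XY} two distinct components $A,A'$ of $G[R]$ with $N(A)=N(A')$ cannot coexist (they are anticomplete, one has chromatic number at most that of the other, and a homogeneous set is complete to its own neighborhood). Hence distinct components of $G[R]$ have distinct neighborhoods. By \ref{s0s2s31}, \ref{s2s312}, \ref{s21s31}, \ref{s2s313}, \ref{s2(1)s32(1)} and \ref{s2(1)s4(2)}, for a component $A$ of $G[R]$ the neighborhood $N(A)$ is the union of the fixed set $\{v_2,v_5\}\cup S_2(2)\cup S_2(5)\cup S_3^1(2)\cup S_3^1(5)\cup S_3^2(1)\cup S_4(2)\cup S_4(5)$, of $N(A)\cap(S_3^2(3)\cup S_3^2(4))$, and of $N(A)\cap(S_2(3)\cup S_2(4))$. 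As $S_3^2(3)\cup S_3^2(4)$ is bounded, everything reduces to bounding the number of distinct sets of the form $N(A)\cap(S_2(3)\cup S_2(4))$.

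This last step is the main obstacle, because $S_2(3)$ and $S_2(4)$ are exactly the sets still being bounded, so a direct pigeonhole argument would be circular. I would handle it as follows. By \ref{s21s23} and \autoref{lem:edgemixhomo} applied in both directions, each component of $G[S_2(3)]$ and each component of $G[S_2(4)]$ is complete or anticomplete to each component of $G[R]$; thus $N(A)\cap(S_2(3)\cup S_2(4))$ is a union of such components. Next, $W_4$-freeness restricts these attachments severely: if a component $A$ of $G[R]$ were complete to two distinct components $D_1,D_2$ of $G[S_2(3)]$, then for $a\in A$ and $d_i\in D_i$ the set $\{a,d_1,v_4,d_2\}$ would induce a $C_4$, and \ref{s2(1)s4(2)}, \ref{s2(1)s32(1)}, \ref{s2(1)s4s5} and \ref{s2s312} would produce a vertex of $S_4(2)\cup S_3^2(1)\cup S_2(5)$ completing it to a $W_4$ — a contradiction except in degenerate situations where some of $S_4(2),S_3^2(1),S_2(5)$ are empty or anticomplete to the relevant sets; an analogous statement holds for $S_2(4)$. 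Hence, outside of such degenerate situations (which only simplify the structure and are treated separately), every component of $G[R]$ is complete to at most one component of $G[S_2(3)]$ and at most one of $G[S_2(4)]$. Finally, the nonhomogeneous components of $S_2(3)$ and of $S_2(4)$ are bounded in number (by the same mixed-edge argument used above, with indices shifted), so only boundedly many components of $G[R]$ can attach to them; for a component of $G[R]$ attached only to homogeneous components of $S_2(3)\cup S_2(4)$, one closes the argument by iterating the $W_4$-restrictions together with the absence of a clique cutset and a final pigeonhole step, in the spirit of the proof of \autoref{s32(i):bounded}. Keeping the interaction among $S_2(1)$, $S_2(3)$ and $S_2(4)$ non-circular is the delicate part of this step.
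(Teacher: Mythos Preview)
Your setup is fine and matches the paper: peel off a bounded fringe $L$, observe that each component of $R$ is a homogeneous clique, and reduce to bounding the number of components of $R$. The gap is in your last step, where you try to control the attachments of a component $A$ of $R$ to components of $S_2(3)$ and $S_2(4)$. Your proposed $W_4$ on $\{a,d_1,v_4,d_2\}$ needs a hub adjacent to all four, and the candidates you list do not work in general: a vertex of $S_3^2(1)$ or of $S_2(5)$ is complete to $S_2(1)$ but has no forced adjacency to $d_1,d_2\in S_2(3)$ (properties \ref{s2(1)s32(1)} and \ref{s2s312} only give completeness to $S_2(1)$), so only $S_4(2)$ is a guaranteed hub, and $S_4(2)$ may well be empty. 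Your fallback --- bounding nonhomogeneous components of $S_2(3)$, $S_2(4)$ and then ``iterating in the spirit of \autoref{s32(i):bounded}'' --- is exactly the circularity you flag and is not actually carried out; as written it is a sketch of a hope rather than an argument.

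The paper sidesteps the circularity entirely with a short nesting argument. After bucketing the $K_1$-components of $R$ by their neighbourhood in the bounded set $(S_3^2\setminus S_3^2(1))\cup L$, it shows that for any two vertices $t_1,t_2$ in the same bucket one has a strict inclusion $N(t_1)\cap S_2(3)\subsetneq N(t_2)\cap S_2(3)$ or conversely: a symmetric difference in $S_2(3)$ produces an induced $P_5$, and equality in $S_2(3)$ forces the distinguishing vertices into $S_2(4)$, again giving a $P_5$. Four vertices in one bucket then yield a chain of length three in $S_2(3)$, so two of them, say $y_1,y_2$, share two common neighbours $y_1',y_2'\in S_2(3)$ lying in different components of $S_2(3)$ (using \ref{s21s23}), hence nonadjacent; now $\{y_1,y_1',y_2,y_2'\}$ is an induced $C_4$ and $v_2$ --- which is always adjacent to everything in $S_2(1)\cup S_2(3)$ --- is the hub of a $W_4$. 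This uses two vertices of $R$ rather than one, and $v_2$ as hub rather than an external vertex, which is precisely what makes the argument go through without any assumption on $S_4(2)$ and without touching the size of $S_2(3)\cup S_2(4)$.
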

        \begin{proof}
        Let $L =\{v|v \text{ has a neighbor in } S_4(1)\cup S_4(3) \cup S_4(4) \cup S_5\}$ and $R = S_2(1) \setminus L$.
        Let $ u \in S_4(1)\cup S_4(3) \cup S_4(4) \cup S_5$ and $M_u = \{v|uv \in E(G), v \in S_2(1)\}$. By \ref{s2(1)s4s5},
        $M_u$ is a clique. So $|M_u| \le k-3$. Since $ S_4(1)\cup S_4(3) \cup S_4(4) \cup S_5$ is bounded, $\bigcup_{u \in  S_4(1)\cup S_4(3) \cup S_4(4) \cup S_5}M_u$ is bounded. Note that $ L \subseteq \bigcup_{u \in S_4(1)\cup S_4(3) \cup S_4(4) \cup S_5}M_u$, and so $L$ is bounded. Next, we bound $R$.

        Let $A$ be a component in $R$. \ben{Note that if there is only one homogeneous component, then $|R|\le k-3$ since $R$ is a clique by \ref{s2s32:p3-free}. So we may assume $R$ has at least two components}. By \ref{s2s312}--\ref{s2(1)s32(3)}, \ref{s2(1)s4(2)}, \ben{and \autoref{lem:edgemixhomo}}, $A$ is homogeneous. By \ref{s2s32:p3-free} and \autoref{lem:homoge}, $A$ is a clique and $|A|\le k-2$.
         In the following, we will show that the number of $K_i$-components in $R$ is not more than $3\cdot2^{|S_3^2\setminus S_3^2(1) \cup L|}$ \ben{which is bounded by \autoref{s32(i):bounded}}. Since each component in $R$ is homogeneous,  we only need to show that the number of $K_1$-components in $R$ is not more than $3\cdot2^{|S_3^2\setminus S_3^2(1) \cup L|}$.

         Suppose not. Then there exist \ben{$K_1$-components} $u,v,w,x \in R$ such that $u,v,w,x$ have the same neighbors in $S_3^2\setminus S_3^2(1) \cup L$ \ben{by the Pigeonhole Principle}. \ben{Note that if $R$ has less than four components, we are done, so we may assume that all of $u,v,w,$ and $x$ exist. Since each is a $K_1$-component in $R$, with the same neighbors in $C$ and in $S_3^2(1)$, each must have distinct neighbors in another set or contradict \autoref{lem:XY}.}
         Let $S =\{u,v,w,x\}$. Next, we will show that for every $t_1,t_2 \in S$, $N(t_1)\cap S_2(3) \subset N(t_2)\cap S_2(3)$ or $N(t_2)\cap S_2(3) \subset N(t_1)\cap S_2(3)$.

         Suppose that there exist $t_1' \in (N(t_1) \cap S_2(3))\setminus (N(t_2) \cap S_2(3))$
         and $t_2' \in (N(t_2) \cap S_2(3))\setminus (N(t_1) \cap S_2(3))$. If $t_1't_2' \in E(G)$, then
         $\{t_1',t_2',t_2,v_5,v_1\}$ is an induced $P_5$. If $t_1't_2' \notin E(G)$, then $\{t_1',t_1,v_5,t_2,t_2'\}$ is an induced $P_5$. \ben{Thus, there cannot be $t_1' \in (N(t_1) \cap S_2(3))\setminus (N(t_2) \cap S_2(3))$
         and $t_2' \in (N(t_2) \cap S_2(3))\setminus (N(t_1) \cap S_2(3))$.}
         If $N(t_1) \cap S_2(3) = N(t_2) \cap S_2(3)$\ben{, then} since $t_1,t_2$ are not comparable, there exist $t_1' \in N(t_1)\setminus N(t_2)$
         and $t_2' \in N(t_2)\setminus N(t_1)$. \ben{By the definition of $R$ and $L$, we must have} $t_1',t_2' \in S_2(4)$. But now $\{t_2',t_1',t_1,v_2,\ben{v_1}\}$ or $\{t_2',t_2,v_2,t_1,t_1'\}$
         is an induced $P_5$ depening on whether $t_1't_2' \in E(G)$. \ben{Therefore, for every $t_1,t_2 \in S$, $N(t_1)\cap S_2(3) \subset N(t_2)\cap S_2(3)$ or $N(t_2)\cap S_2(3) \subset N(t_1)\cap S_2(3)$.} 
         
         Since $|S| =4$, \ben{we must have $$N(y_0)\cap S_2(3) \subset N(y_1)\cap S_2(3)\subset N(y_2)\cap S_2(3) $$ for some $\{\wen{y_0},y_1,y_2\}\subset S$. So,} there exist $y_1,y_2 \in S$
         such that $y_1,y_2$ have at least two common neighbors $y_1',y_2' \in S_2(3)$ and, \ben{by~\ref{s21s23}, we must have}  $y_1'y_2' \notin E(G)$.
         Then $\{y_1,y_2,y_1',y_2',v_2\}$ is a\ben{n induced} $W_4$. This is a contradiction. So $R$ is bounded.

         Thus, $S_2(1)$ is bounded.
         \end{proof}

        By \autoref{s32(i):bounded}, \autoref{s0:bounded} and \autoref{s2(i):bounded}, \autoref{lem:c5} holds.
        \end{proof}

        \section{Proof of \autoref{lem:c7}: Structure around an induced 7-antihole}\label{sec:C7}

       \begin{proof}[Proof of \autoref{lem:c7}]
       We prove the lemma by induction on $k$. If $1 \le k \le 4$, there are finitely many $k$-vertex-critical $(P_5,W_4)$-free
       graphs by \autoref{4-vertex-cri}. In the following, we assume that $k \ge 5$ and there are finitely many $i$-vertex-critical $(P_5,W_4)$-free
       graphs for $i \le {k-1}$. 
       
  Let $G$ be a $k$-vertex-critical $(P_5,W_4)$-free graph such that $C = v_1,v_2,\ldots,v_7$ induces a $\overline{C_7}$ in $G$ where $v_iv_j \in E(G)$ if and only if $\ben{1<|i - j| < 6}$ (all indices are modulo $7$).  We partition $V(G)$ with respect to $C$. \iain{Recall $S_i$ denotes the sets of vertices which have exactly $i$ neighbors in $C$. Additionally, note if $G$ has a $C_5$, then the proof is completed by \autoref{lem:c5}. So in the following, we assume that $G$ is $(P_5,W_4,C_5)$-free. }


        \iain{We begin by proving a few claims which will help show that only $S_3$ and $S_5$ are non-empty.}

        \begin{enumerate}[label=\bfseries (\Roman*)]
                       
            \item \iain{For each $1 \leq i \leq 7$, no vertex $u \notin C$ is adjacent to $v_i$, $v_{i+1}$, $v_{i-2}$, and $v_{i-3}$.} \label{c7:s1256}

            \iain{Then $\{u, v_i, v_{i+1}, v_{i-2}, v_{i-3}\}$ induces a $W_4$.}

            \item \iain{For each $1 \leq i \leq 7$, no vertex $u \notin C$ is adjacent to $v_i$, $v_{i+1}$ but not adjacent to $v_{i-1}$, $v_{i+2}$.} \label{c7:s12}

            \iain{Then $\{u, v_i, v_{i+2}, v_{i-1}, v_{i+1}\}$ induces a $C_5$.}

            \item \iain{For each $1 \leq i \leq 7$, no vertex $u \notin C$ is adjacent to $v_i$, but not adjacent to $v_{i-1}$,  $v_{i+1}$, $v_{i+2}$.} \label{c7:s1}

            \iain{Then $\{u, v_i, v_{i+2}, v_{i-1}, v_{i+1}\}$ induces a $P_5$.}
        \end{enumerate}

        \iain{It follows from \ref{c7:s1256} that $S_7=\emptyset$, $S_6=\emptyset$, and additionally the only non-empty subsets of $S_5$ are $S_5(v_{i-2},v_{i-1},v_i,v_{i+1},v_{i+3})$ for each $1 \leq i \leq 7$.
            Additionally, the only non-empty subsets of $S_4$ where \ref{c7:s1256} does not apply are $S_4(v_{i-2},v_{i-1},v_i,v_{i+1})$ and $S_4(v_{i-2},v_i,v_{i+1},v_{i+3})$. However, if $u \in S_4(v_{i-2},v_{i-1},v_i,v_{i+1})$ then $\{v_{i-2}, u, v_{i+1}, v_{i+3}, v_{i}\}$ induces a $W_4$ and if $u \in S_4(v_{i-2},v_i,v_{i+1},v_{i+3})$ then $\{u, v_{i+1}, v_{i-1}, v_{i+2}, v_i\}$ induces a $C_5$. Therefore $S_4=\emptyset$.
        From \ref{c7:s12} it follows that  the only non-empty subsets of $S_3$ are $S_3(v_{i-1},v_i,v_{i+1})$ for each $1 \leq i \leq 7$.
        To show $S_2 = \emptyset$, let $u \in S_2(v_i, v_j)$. It follows from \ref{c7:s12} that $|i-j|>1$ modulo 7. Moreover, either $i-j>2$ or $j-i>2$ modulo 7 \ben{(since if $1<i-j<2$, then it must equal 2, and therefore $j-i=-2$ which is $5$ modulo $7$)}. It then follows from \ref{c7:s1} that $S_2 = \emptyset$.
        Additionally it follows from \ref{c7:s1} that $S_1 = \emptyset$.
        Finally}, we will show that $S_0 = \emptyset$. Suppose not. By the connectivity of $G$, some vertex $u \in S_0$ has a neighbor
       $v$ \ben{and since we just showed that $S_i=\emptyset$ for all $i\in \{1,2,4,6,7\}$, we must have} $v \in S_3 \cup S_5$. It can be readily seen that there exists an index $j$ such that $v$ is adjacent to $v_j$, but not adjacent to $v_{j+1}$ or $v_{j+3}$. Then $\{u,v,v_j,v_{j+3},v_{j+1}\}$ is an induced $P_5$ \ben{(i.e., if $v\in S_5$, then from above, we know that $v\in S_5(v_{i-2},v_{i-1},v_i,v_{i+1},v_{i+3})$, so the result follows with $j=i+1$)}. This proves that $S_0 =\emptyset$.

        \ben{Thus, $S_i=\emptyset$ for all $i\neq 3,5$.} We will now show that both $S_5$ and $S_3$ are bounded.

       \begin{claim}
           \iain{For each $1 \le i \le 7$, $S_5(v_{i-2}, v_{i-1},v_i,v_{i+1}, v_{i+3})$ is bounded.}
       \end{claim}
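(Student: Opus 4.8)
The plan is to follow the same template that succeeded for the other bounded sets in this section: the set $S_5(v_{i-2},v_{i-1},v_i,v_{i+1},v_{i+3})$ (by symmetry fix $i=1$, so the set is $T := S_5(v_6,v_7,v_1,v_2,v_4)$) must be $P_3$-free for some structural reason coming from $W_4$-freeness, hence each component of $G[T]$ is a clique; each clique is bounded by $K_k$-freeness of $G$ (it plus two suitable vertices of $C$ would form a large clique); and then the number of components is bounded by combining \autoref{lem:XY}, \autoref{lem:homoge}, and a Pigeonhole argument over the neighbourhoods in the already-bounded sets $S_3 \cup (S_5 \setminus T)$. First I would establish the internal structure of $T$: show that $T$ is $P_3$-free, by taking an induced $P_3$ $uvw$ in $T$ and finding a $W_4$ using two vertices of $C$ that are complete to $T$ (good candidates are $v_7$ and $v_2$, or $v_6$ and $v_2$, whichever pair is nonadjacent on $\overline{C_7}$ — recall $v_j v_k$ is a nonedge iff $|j-k|\in\{1,6\}$, so $v_6 v_7$, $v_7 v_1$, $v_1 v_2$ are the relevant nonedges and I would pick the nonadjacent pair that sits inside the neighbourhood $\{v_6,v_7,v_1,v_2,v_4\}$, namely $v_1$ and one of its $\overline{C_7}$-neighbours among $\{v_6,v_7,v_2\}$ present in the set). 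Then $T$ is $P_3$-free, so every component of $G[T]$ is a clique, and since $G$ is $K_k$-free and two vertices of $C$ that are complete to $T$ form an edge (they are $\overline{C_7}$-adjacent), each such clique has at most $k-3$ vertices.

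Next I would bound the number of components of $G[T]$. Split the components into those with a neighbour in $S_3$ (call the union of such $T$-vertices $L$) and the rest $R$. For $L$: each vertex $x\in S_3$ has $N(x)\cap T$ a clique (otherwise $x$ together with two nonadjacent $T$-vertices and two $C$-vertices gives a $W_4$), so $|N(x)\cap T|\le k-3$, and since $S_3$ is bounded (to be shown just after this claim, or one can first dispatch $S_3$ — but here I will treat it as the already-bounded companion set, mirroring how \autoref{s32(i):bounded} uses the $S_3^1,S_4,S_5$ bounds) we get $|L|$ bounded. For $R$: every component of $R$ is a clique anticomplete to $S_3$; two distinct components of $R$ cannot be comparable by \autoref{lem:XY} (they are anticomplete to each other, and one can be made complete to the other's outside-neighbourhood unless they have distinct neighbours outside), so distinct components have distinct neighbourhoods in $V(G)\setminus C$; since the only remaining sets are $L$ and $S_5\setminus T$, both bounded, Pigeonhole gives at most $2^{|L\,\cup\,(S_5\setminus T)|}$ components, hence $|R|$ is bounded. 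Combining, $|T|\le |L| + |R|\cdot(k-3)$ is bounded.

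The main obstacle I anticipate is the very first step — producing the $W_4$ witnessing $P_3$-freeness (and the analogous $W_4$'s for the "$N(x)\cap T$ is a clique" and "components of $R$ have distinct neighbourhoods" claims) — because in $\overline{C_7}$ the vertices of $C$ complete to $T$ form a triangle-like pattern rather than an independent set, so one has to be careful which pair of $C$-vertices to use as the "hub'' and "rim'' of the $W_4$: the hub must be adjacent to both endpoints of the relevant edge structure and the rim must form a $C_4$. A secondary subtlety is the order of the induction/bounding: this claim should really be proved after (or simultaneously with) the bound on $S_3$, exactly as \autoref{s32(i):bounded} relies on $S_3^1\cup S_4\cup S_5$ being bounded; I would either prove $S_3$ bounded first or, cleaner, bound $S_3$ and $S_5$ together by a single Pigeonhole over the mutual neighbourhoods, since in the $\overline{C_7}$ case the partition is so restricted ($V(G)=C\cup S_3\cup S_5$) that $S_3$ and $S_5$ only see each other and $C$. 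Apart from these, the argument is routine given the lemmas already available.
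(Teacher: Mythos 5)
Your plan misses the simple fact the paper's proof rests on and, as structured, contains an unresolved circular dependency. The paper's argument is essentially two lines: if $u,v$ are nonadjacent vertices of $T=S_5(v_{i-2},v_{i-1},v_i,v_{i+1},v_{i+3})$, then $\{v_{i+3},u,v_i,v,v_{i+1}\}$ induces a $W_4$ (the rim $u,v_i,v,v_{i+1}$ is a $C_4$ because $v_iv_{i+1}\notin E(G)$, $uv\notin E(G)$ and $u,v$ are complete to $\{v_i,v_{i+1}\}$, while $v_{i+3}$ is a hub since it is adjacent to $v_i$, $v_{i+1}$, $u$ and $v$). Hence $T$ is a clique, and since $v_{i-1},v_{i+1},v_{i+3}$ induce a triangle complete to $T$, we get $|T|\le k-4$ immediately. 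No $P_3$-freeness-plus-component-counting, no homogeneous-set machinery, and no reference to $S_3$ or to the other parts of $S_5$ is needed. Indeed, your own intended $P_3$-freeness argument, applied to a nonadjacent pair lying in two different components, already produces exactly this $W_4$ and shows the whole set is one clique, which collapses the rest of your plan.

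The substantive gap is the circularity you flag but do not resolve. To bound $L$ you assume $S_3$ is bounded, but in the paper each $S_3(v_{i-1},v_i,v_{i+1})$ is bounded only \emph{after} this claim, via a pigeonhole bound of the form $2^{|S_5|}$ that requires $S_5$ to be bounded; and to bound $R$ you pigeonhole over neighbourhoods in $S_5\setminus T$, i.e.\ over the very sets this claim is supposed to bound for the other indices. Neither of your proposed fixes works as stated: proving $S_3$ bounded first fails for the same reason (the number of its homogeneous components is controlled only through distinct neighbourhoods in $S_5$, which must be a finite set for the pigeonhole and for the $2^{|S_5|}$ count), and a ``single pigeonhole over the mutual neighbourhoods'' has no finite codomain when both sets are still potentially unbounded. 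So, as written, the proposal does not yield a proof; the correct route is the direct clique argument above, which is unconditional and independent of $S_3$.
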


       \begin{proof}
           \iain{We will show that $S_5(v_{i-2}, v_{i-1},v_i,v_{i+1}, v_{i+3})$ is a clique.
           Suppose not and let $u,v \in S_5(v_{i-2}, v_{i-1},v_i,v_{i+1}, v_{i+3})$ such that $uv \notin E(G)$.
           Then $\{v_{i+3}, u,\ben{v_{i}},v,v_{i+1}\}$ induces a $W_4$.
           Thus $S_5(v_{i-2}, v_{i-1},v_i,v_{i+1}, v_{i+3})$ is a clique.
           Moreover,  $v_{i-1}, v_{i+1}, v_{i+3}$ induce a $K_3$ and are complete to $S_5(v_{i-2}, v_{i-1},v_i,v_{i+1}, v_{i+3})$.
           Therefore $|S_5(v_{i-2}, v_{i-1},v_i,v_{i+1}, v_{i+3})| \le k-4$.}
       \end{proof}

       Finally, we bound $S_3$. We will first prove
       some properties about $S_3$. All properties are proved for $i=1$ due to symmetry.

        \begin{enumerate}[label=\bfseries (\Roman*)]\addtocounter{enumi}{3}
        \item For each $1 \le i \le 7$, $S_3(v_{i-1},v_i,v_{i+1})$ is
        anticomplete to $S_3(v_{i},v_{i+1},v_{i+2})\cup S_3(v_{i-2},v_{i-1},v_{i})$.\label{c7:s3(123)}

        Let $u \in S_3(v_7,v_1,v_2)$ and $v \in S_3(v_1,v_2,v_3)$. If $uv \in E(G)$, then $\{u,v,v_3,v_6,v_4\}$
        is an induced $P_5$. By symmetry, $S_3(v_7,v_1,v_2)$ is anticomplete to $S_3(v_6,v_7,v_1)$.

        \item For each $1 \le i \le 7$, $S_3(v_{i-1},v_i,v_{i+1})$ is
        anticomplete to $S_3(v_{i+1},v_{i+2},v_{i+3})\cup S_3(v_{i-3},v_{i-2},v_{i-1})$.\label{c7:s3(234)}

        Let $u \in S_3(v_7,v_1,v_2)$ and $v \in S_3(v_2,v_3,v_4)$. If $uv \in E(G)$, then $\{u,v_7,v_4,v,v_2\}$
        is a\ben{n induced} $W_4$. By symmetry, $S_3(v_7,v_1,v_2)$ is anticomplete to $S_3(v_5,v_6,v_7)$.

        \item For each $1 \le i \le 7$, $S_3(v_{i-1},v_i,v_{i+1})$ is
        anticomplete to $S_3(v_{i+2},v_{i+3},v_{i+4})\cup S_3(v_{i-4},v_{i-3},v_{i-2})$.\label{c7:s3(345)}

        Let $u \in S_3(v_7,v_1,v_2)$ and $v \in S_3(v_3,v_4,v_5)$. If $uv \in E(G)$, then $\{v_6,v_2,u,v,v_3\}$
        is a $C_5$. By symmetry, $S_3(v_7,v_1,v_2)$ is anticomplete to $S_3(v_4,v_5,v_6)$.

        \item For each $1 \le i \le 7$, $S_5 \setminus S_5(v_{i+2},v_{i+3},v_{i+4},v_{i+5},v_i) $ is not mixed on any component of $S_3(v_{i-1},v_i,v_{i+1})$.\label{c7:s3(not mix)}

        \iain{By \autoref{lem:edgemixhomo}, it suffices to show that $S_5 \setminus S_5(v_{i+2},v_{i+3},v_{i+4},v_{i+5},v_i) $ is not mixed on any edge of $S_3(v_{i-1},v_i,v_{i+1})$}.
        Let $v \in S_5 \setminus S_5(v_{3},v_{4},v_{5},v_{6},v_1)$ and suppose that $v$ is mixed on the edge $uu'$ of $S_3(v_7,v_1,v_2)$.
        Without loss of generality let $u'v \in E(G)$ and $uv \notin E(G)$. \iain{Note that $v$ is adjacent to two or three vertices in $\{v_3,v_4,v_5,v_6\}$}.
        So there must exist two adjacent vertices $v_j,v_\ell \in \{v_3,v_4,v_5,v_6\}$ such that $v$ is mixed on $\{v_j,v_\ell\}$.
        Without loss of generality let $vv_j \in E(G)$ and $vv_\ell\notin E(G)$.
        Then $\{u,u',v,v_j,v_\ell\}$ is an induced $P_5$.

         \item For each $1 \le i \le 7$, $S_3(v_{i_1},v_i,v_{i+1})$ is complete to $S_5(v_{i+2},v_{i+3},v_{i+4},v_{i+5},v_i)$.\label{c7:s3(comp)}

         Let $u \in S_3(v_7,v_1,v_2)$ and $v \in S_5(v_3,v_4,v_5,v_6,v_1)$. If $uv \notin E(G)$, then $\{u,v_2,v_4,v, v_3\}$ is an induced $P_5$.

        \end{enumerate}

        \begin{claim}
        For each $1 \le i \le 7$, $S_3(v_{i-1},v_i,v_{i+1})$ is bounded.
        \end{claim}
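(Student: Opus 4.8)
The plan is to prove the claim for $i=1$ (the other six cases following by symmetry), that is, to show that $S:=S_3(v_7,v_1,v_2)$ is bounded. The strategy has two parts: first I would show that every connected component of $G[S]$ is a homogeneous set of $G$ and hence, by \autoref{lem:homoge}, a smaller vertex-critical $(P_5,W_4)$-free graph of bounded order; then I would bound the number of components using \autoref{lem:XY}.

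For the first part, let $A$ be a component of $G[S]$. I would verify that no vertex of $V(G)\setminus A$ is mixed on $A$: the vertices $v_7,v_1,v_2$ are complete to $A$ and $v_3,v_4,v_5,v_6$ are anticomplete to $A$ by the definition of $S_3(v_7,v_1,v_2)$; each of the six other sets $S_3(v_{j-1},v_j,v_{j+1})$ is anticomplete to $S$ by \ref{c7:s3(123)}, \ref{c7:s3(234)} and \ref{c7:s3(345)}, and so is every other component of $G[S]$; the set $S_5(v_3,v_4,v_5,v_6,v_1)$ is complete to $S$ by \ref{c7:s3(comp)}; every vertex of $S_5\setminus S_5(v_3,v_4,v_5,v_6,v_1)$ is complete or anticomplete to $A$ by \ref{c7:s3(not mix)}; and the remaining sets $S_0,S_1,S_2,S_4,S_6,S_7$ are empty. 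Hence $A$ is a homogeneous set of $G$. Since $A$ is a nonempty proper induced subgraph of the $k$-vertex-critical graph $G$, we have $\chi(A)=m$ with $1\le m<k$, so \autoref{lem:homoge} (applied to the homogeneous set $A$, whose unique component is $A$ itself) shows that $A$ is an $m$-vertex-critical $(P_5,W_4)$-free graph with $m\le k-1$. By the inductive hypothesis there is a constant $M=M(k)$ with $|A|\le M$.

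For the second part, I would assign to each component $A$ the set $T_A:=\{w\in S_5\setminus S_5(v_3,v_4,v_5,v_6,v_1):w\text{ is complete to }A\}$; by the first part every vertex of $S_5\setminus S_5(v_3,v_4,v_5,v_6,v_1)$ outside $T_A$ is anticomplete to $A$, and reading off the list above gives $N(A)=\{v_7,v_1,v_2\}\cup S_5(v_3,v_4,v_5,v_6,v_1)\cup T_A$. If two distinct components $A$ and $B$ had $T_A=T_B$, then $B$ would be complete to $N(A)$ — to $v_7,v_1,v_2$ by the definition of $S_3(v_7,v_1,v_2)$, to $S_5(v_3,v_4,v_5,v_6,v_1)$ by \ref{c7:s3(comp)}, and to $T_A=T_B$ by the definition of $T_B$ — and, together with the facts that $A$ is anticomplete to $B$ and, after possibly interchanging $A$ and $B$, $\chi(A)\le\chi(B)$, this contradicts \autoref{lem:XY}. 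Hence distinct components yield distinct subsets $T_A\subseteq S_5\setminus S_5(v_3,v_4,v_5,v_6,v_1)$, so $G[S]$ has at most $2^{|S_5|}$ components, whence $|S|\le M\cdot 2^{|S_5|}$. Since $|S_5|\le 7(k-4)$ by the preceding claim, $S_3(v_7,v_1,v_2)$ is bounded; the same argument bounds every $S_3(v_{i-1},v_i,v_{i+1})$, and combined with the bound on $|S_5|$ and $|C|=7$ this bounds $|V(G)|$, completing the proof of \autoref{lem:c7}.

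The step I expect to be the main obstacle is the homogeneity check in the first part: it is crucial that an individual \emph{component} $A$, rather than the whole set $S$, is homogeneous, which is exactly why \ref{c7:s3(not mix)} was stated for components (via \autoref{lem:edgemixhomo}) rather than merely for edges. Once that is secured, the remainder is a routine combination of \autoref{lem:homoge} (replacing each component by a smaller vertex-critical graph, of which there are finitely many by induction) and \autoref{lem:XY} (bounding the number of components by the number of admissible neighborhoods inside $S_5$).
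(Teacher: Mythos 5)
Your proposal is correct and follows essentially the same route as the paper: each component of $S_3(v_{i-1},v_i,v_{i+1})$ is shown to be homogeneous using \ref{c7:s3(123)}--\ref{c7:s3(comp)}, its order is bounded via \autoref{lem:homoge} and the inductive hypothesis, and the number of components is bounded by $2^{|S_5|}$ since two components with the same neighborhood in $S_5$ would contradict \autoref{lem:XY}. Your write-up is in fact slightly more careful than the paper's (explicitly verifying the hypotheses of \autoref{lem:XY} and stating the final bound as $M\cdot 2^{|S_5|}$ rather than $2^{|S_5|}$), but the argument is the same.
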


          \begin{proof}
          Let $A$ be a component of $S_3(v_{i-1},v_i,v_{i+1})$, then $A$ is homogeneous by \ref{c7:s3(123)}--\ref{c7:s3(comp)}.
         By \autoref{lem:homoge}, each component of $S_3(v_{i-1},v_i,v_{i+1})$ is an $m$-vertex-critical
        $(P_5,W_4)$-free graph where $1 \le m \le k-3$. By the inductive hypothesis,
         it follows that there are finitely many $m$-vertex-critical $(P_5, W_4)$-free
          graphs for $1 \le m \le k-3$. 
          \iain{Now let $H_1$ be a component of $S_3(v_{i-1},v_i,v_{i+1})$.
          Note that $N(H)=\{v_{i-1},v_i,v_{i+1}\} \cup X$ for some $X \subseteq S_5$.
          Additionally, if two components of $S_3(v_{i-1},v_i,v_{i+1})$ had the same neighborhood in $S_5$ then this would contradict \autoref{lem:XY}.
          Therefore, each component of $S_3(v_{i-1},v_i,v_{i+1})$ has a unique set of neighbors in $S_5$.}
          By the Pigeonhole Principle, $|S_3(v_{i-1},v_i,v_{i+1})| \leq 2^{|S_5|}$. 
          Thus, $S_3(v_{i-1},v_i,v_{i+1})$ is bounded.
          \end{proof}

          This completes the proof of \autoref{lem:c7}.
       \end{proof}


\section{Characterizing all $5$-vertex-critical $(P_5,W_4)$-free graphs}\label{characterization}

Ho\`{a}ng et al.~\cite{HKLSS10} proposed a recursive graph generation algorithm that can generate all $k$-vertex-critical $\mathcal{H}$-free graphs. This algorithm was further generalized, improved and its implementation was made more efficient and generic (by considering additional heuristics, pruning rules and parameters) in a series of papers~\cite{GS18,GJORS24,XJGH23,XJGH24}. We used this algorithm to characterize all $5$-vertex-critical $(P_5,W_4)$-free graphs.

To keep the current paper self-contained, we briefly sketch the main ideas of this algorithm. The algorithm receives as input a graph $I$ and recursively generates all $k$-vertex-critical $\mathcal{H}$-free graphs for which $I$ occurs as an induced subgraph. In order to do so, the algorithm adds a vertex $u$ to $I$ and adds edges between $u$ and vertices in $V(I)$ in all possible ways and recurses for each of the obtained graphs. However, in order to make the algorithm more efficient and even terminate in some cases\footnote{We remark that the algorithm will never terminate if there are infinitely many $k$-vertex-critical $\mathcal{H}$-free graphs, but if there are finitely many such graphs the algorithm sometimes terminates if the pruning rules are strong enough. In the latter case, it is guaranteed that there are only finitely many $k$-vertex-critical $\mathcal{H}$-free graphs and the algorithm enumerates all of them.}, several additional optimizations, heuristics and pruning rules are added while still ensuring that all $k$-vertex-critical $\mathcal{H}$-free graphs are enumerated, i.e. the algorithm is exhaustive. For example, it is clear that if the algorithm encounters a graph which is not $\mathcal{H}$-free, it does not need to recurse because such a graph cannot occur as a proper induced subgraph of any $k$-vertex-critical $\mathcal{H}$-free graph. Another example of a pruning rule is based on the following observation: if $I$ occurs as an induced subgraph of a $k$-vertex-critical $\mathcal{H}$-free graph $G$ and there are two vertices $x, y \in V(I)$ such that $N_I(x) \subseteq N_I(y)$, then there must exist a vertex $u \in V(G) \setminus V(I)$ such that $u$ is adjacent to $x$, but not to $y$ (this was in fact generalized to sets $X$ and $Y$ in \autoref{lem:XY}). This observation restricts the set of edges that can be added between $u$ and $V(I)$. For the sake of brevity, we refer the interested reader to~\cite{GS18} for a more complete overview of the different pruning rules, which are more complicated than the sketch in the current paper. When adding edges to a graph in multiple ways, often isomorphic copies will be obtained. The algorithm detects this by computing a canonical form of its input graph $I$ using the \textit{nauty} package~\cite{MP14} (two graphs are isomorphic if and only if they have the same canonical form) and prunes a graph if an isomorphic graph was already processed earlier. The pseudo code of the algorithm described in this paragraph is shown in \autoref{algo:genGraphs}.

\begin{algorithm}[ht!b]
\caption{generateGraphs(Induced graph $I$, Integer $k$, Set of graphs $\mathcal{H}$)}
\label{algo:genGraphs}
  \begin{algorithmic}[1]
		\STATE // Generate all $k$-vertex-critical $\mathcal{H}$-free graphs that contain $I$ as an induced subgraph
        \IF{$I$ can be pruned by one of the pruning rules}
            \STATE return
        \ENDIF
        
        \STATE $C \gets \text{computeCanonicalForm}(I)$

        \STATE // A graph isomorphic with $I$ was already encountered before
        \IF{$\text{allCanonicalForms}.\text{contains}(C)$}
            \STATE return
        \ELSE
            \STATE $\text{allCanonicalForms}.\text{add}(C)$
                \IF{$I$ is $k$-vertex-critical $\mathcal{H}$-free}
                    \STATE Output $I$
                \ENDIF
                \FOR{every graph $I'$ obtained by adding a vertex $u$ to $I$ and edges between $u$ and vertices in $V(I)$ in all possible ways}
    				\STATE generateGraphs($I'$,$k$,$\mathcal{H}$)
    		  \ENDFOR
        \ENDIF
  \end{algorithmic}
\end{algorithm}

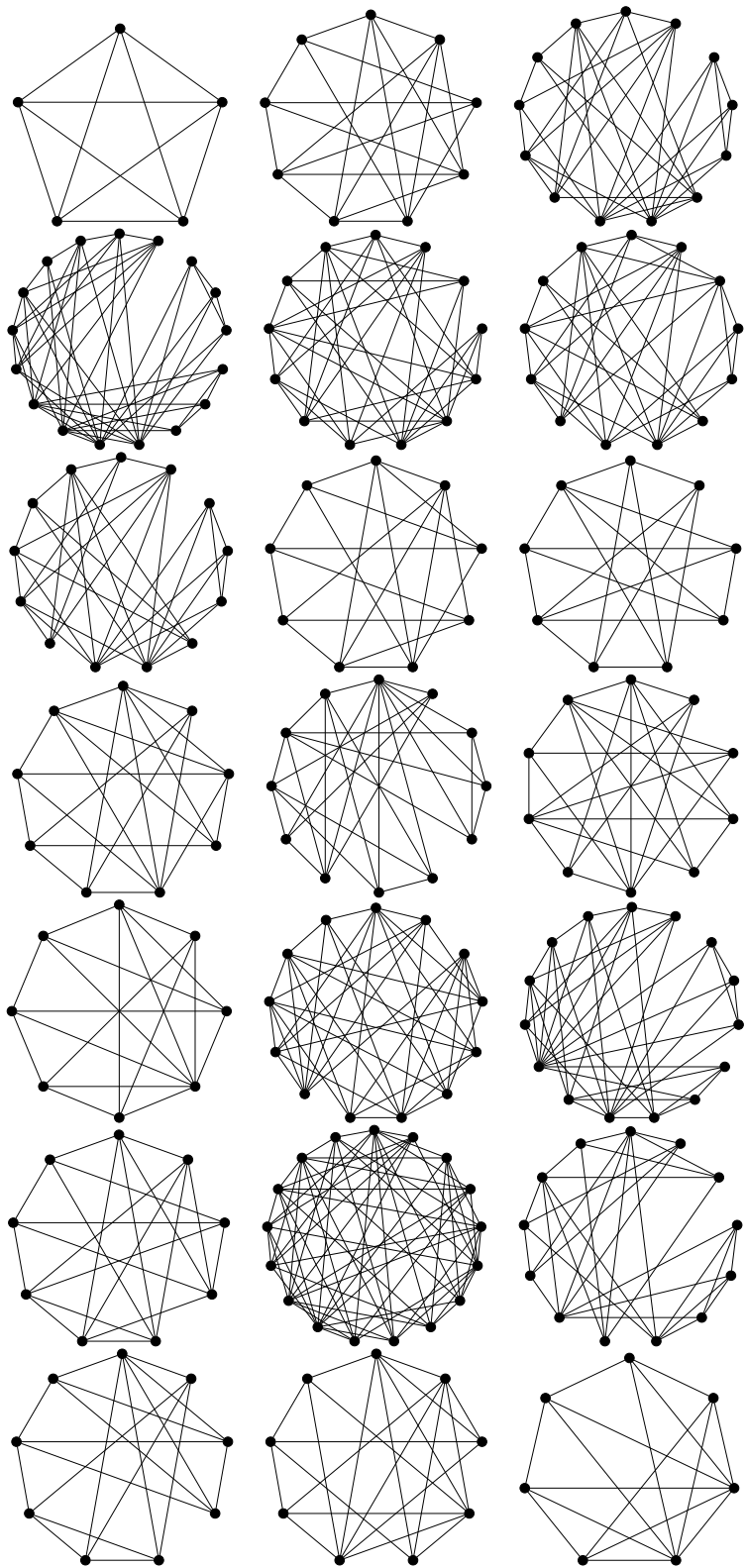
\begin{figure}[h]
    \centering
    \def\c{0.3}
    \def\r{4.75}
    
    \scalebox{\c}{\qquad\begin{tikzpicture}
\GraphInit[vstyle=Classic]
\Vertex[L=\hbox{},x=\r*0.9510565162951535cm,y=\r*0.30901699437494745cm]{v0}
\Vertex[L=\hbox{},x=\r*0.0cm,y=\r*1.0cm]{v1}
\Vertex[L=\hbox{},x=\r*-0.9510565162951535cm,y=\r*0.30901699437494745cm]{v2}
\Vertex[L=\hbox{},x=\r*-0.5877852522924731cm,y=\r*-0.8090169943749475cm]{v3}
\Vertex[L=\hbox{},x=\r*0.5877852522924731cm,y=\r*-0.8090169943749475cm]{v4}
\Edge[](v0)(v1)
\Edge[](v0)(v2)
\Edge[](v0)(v3)
\Edge[](v0)(v4)
\Edge[](v1)(v2)
\Edge[](v1)(v3)
\Edge[](v1)(v4)
\Edge[](v2)(v3)
\Edge[](v2)(v4)
\Edge[](v3)(v4)
\end{tikzpicture}}
\scalebox{\c}{\qquad\begin{tikzpicture}
\GraphInit[vstyle=Classic]
\Vertex[L=\hbox{},x=\r*0.6427876096865394cm,y=\r*0.766044443118978cm]{v0}
\Vertex[L=\hbox{},x=\r*0.0cm,y=\r*1.0cm]{v1}
\Vertex[L=\hbox{},x=\r*-0.6427876096865394cm,y=\r*0.766044443118978cm]{v2}
\Vertex[L=\hbox{},x=\r*-0.984807753012208cm,y=\r*0.17364817766693033cm]{v3}
\Vertex[L=\hbox{},x=\r*-0.8660254037844386cm,y=\r*-0.5cm]{v4}
\Vertex[L=\hbox{},x=\r*-0.3420201433256688cm,y=\r*-0.9396926207859083cm]{v5}
\Vertex[L=\hbox{},x=\r*0.3420201433256688cm,y=\r*-0.9396926207859083cm]{v6}
\Vertex[L=\hbox{},x=\r*0.8660254037844386cm,y=\r*-0.5cm]{v7}
\Vertex[L=\hbox{},x=\r*0.984807753012208cm,y=\r*0.17364817766693033cm]{v8}
\Edge[](v0)(v1)
\Edge[](v0)(v4)
\Edge[](v0)(v5)
\Edge[](v0)(v6)
\Edge[](v0)(v7)
\Edge[](v1)(v2)
\Edge[](v1)(v5)
\Edge[](v1)(v6)
\Edge[](v1)(v7)
\Edge[](v2)(v3)
\Edge[](v2)(v6)
\Edge[](v2)(v8)
\Edge[](v3)(v4)
\Edge[](v3)(v7)
\Edge[](v3)(v8)
\Edge[](v4)(v5)
\Edge[](v4)(v7)
\Edge[](v4)(v8)
\Edge[](v5)(v6)
\Edge[](v5)(v7)
\Edge[](v6)(v8)
\end{tikzpicture}}
\scalebox{\c}{\qquad\begin{tikzpicture}
\GraphInit[vstyle=Classic]
\Vertex[L=\hbox{},x=\r*0.4647231720437686cm,y=\r*0.8854560256532099cm]{v0}
\Vertex[L=\hbox{},x=\r*0.0cm,y=\r*1.0cm]{v1}
\Vertex[L=\hbox{},x=\r*-0.4647231720437686cm,y=\r*0.8854560256532099cm]{v2}
\Vertex[L=\hbox{},x=\r*-0.8229838658936564cm,y=\r*0.5680647467311558cm]{v3}
\Vertex[L=\hbox{},x=\r*-0.992708874098054cm,y=\r*0.12053668025532306cm]{v4}
\Vertex[L=\hbox{},x=\r*-0.9350162426854148cm,y=\r*-0.3546048870425356cm]{v5}
\Vertex[L=\hbox{},x=\r*-0.6631226582407953cm,y=\r*-0.7485107481711011cm]{v6}
\Vertex[L=\hbox{},x=\r*-0.23931566428755782cm,y=\r*-0.970941817426052cm]{v7}
\Vertex[L=\hbox{},x=\r*0.23931566428755782cm,y=\r*-0.970941817426052cm]{v8}
\Vertex[L=\hbox{},x=\r*0.6631226582407953cm,y=\r*-0.7485107481711011cm]{v9}
\Vertex[L=\hbox{},x=\r*0.9350162426854148cm,y=\r*-0.3546048870425356cm]{v10}
\Vertex[L=\hbox{},x=\r*0.992708874098054cm,y=\r*0.12053668025532306cm]{v11}
\Vertex[L=\hbox{},x=\r*0.8229838658936564cm,y=\r*0.5680647467311558cm]{v12}
\Edge[](v0)(v1)
\Edge[](v0)(v4)
\Edge[](v0)(v5)
\Edge[](v0)(v6)
\Edge[](v0)(v7)
\Edge[](v0)(v8)
\Edge[](v1)(v2)
\Edge[](v1)(v5)
\Edge[](v1)(v6)
\Edge[](v1)(v9)
\Edge[](v2)(v3)
\Edge[](v2)(v6)
\Edge[](v2)(v7)
\Edge[](v2)(v8)
\Edge[](v2)(v9)
\Edge[](v3)(v4)
\Edge[](v3)(v8)
\Edge[](v3)(v9)
\Edge[](v4)(v5)
\Edge[](v4)(v7)
\Edge[](v5)(v6)
\Edge[](v5)(v7)
\Edge[](v5)(v8)
\Edge[](v6)(v9)
\Edge[](v7)(v9)
\Edge[](v7)(v10)
\Edge[](v7)(v11)
\Edge[](v7)(v12)
\Edge[](v8)(v9)
\Edge[](v8)(v10)
\Edge[](v8)(v11)
\Edge[](v8)(v12)
\Edge[](v10)(v11)
\Edge[](v10)(v12)
\Edge[](v11)(v12)
\end{tikzpicture}}
\scalebox{\c}{\qquad\begin{tikzpicture}
\GraphInit[vstyle=Classic]

\Vertex[L=\hbox{},x=\r*0.36124166618715303cm,y=\r*0.9324722294043558cm]{v0}
\Vertex[L=\hbox{},x=\r*0.0cm,y=\r*1.0cm]{v1}
\Vertex[L=\hbox{},x=\r*-0.36124166618715303cm,y=\r*0.9324722294043558cm]{v2}
\Vertex[L=\hbox{},x=\r*-0.6736956436465572cm,y=\r*0.7390089172206591cm]{v3}
\Vertex[L=\hbox{},x=\r*-0.8951632913550623cm,y=\r*0.44573835577653825cm]{v4}
\Vertex[L=\hbox{},x=\r*-0.9957341762950345cm,y=\r*0.09226835946330199cm]{v5}
\Vertex[L=\hbox{},x=\r*-0.961825643172819cm,y=\r*-0.27366299007208283cm]{v6}
\Vertex[L=\hbox{},x=\r*-0.7980172272802396cm,y=\r*-0.6026346363792563cm]{v7}
\Vertex[L=\hbox{},x=\r*-0.5264321628773558cm,y=\r*-0.8502171357296141cm]{v8}
\Vertex[L=\hbox{},x=\r*-0.18374951781657053cm,y=\r*-0.9829730996839018cm]{v9}
\Vertex[L=\hbox{},x=\r*0.18374951781657053cm,y=\r*-0.9829730996839018cm]{v10}
\Vertex[L=\hbox{},x=\r*0.5264321628773558cm,y=\r*-0.8502171357296141cm]{v11}
\Vertex[L=\hbox{},x=\r*0.7980172272802396cm,y=\r*-0.6026346363792563cm]{v12}
\Vertex[L=\hbox{},x=\r*0.961825643172819cm,y=\r*-0.27366299007208283cm]{v13}
\Vertex[L=\hbox{},x=\r*0.9957341762950345cm,y=\r*0.09226835946330199cm]{v14}
\Vertex[L=\hbox{},x=\r*0.8951632913550623cm,y=\r*0.44573835577653825cm]{v15}
\Vertex[L=\hbox{},x=\r*0.6736956436465572cm,y=\r*0.7390089172206591cm]{v16}
\Edge[](v0)(v1)
\Edge[](v0)(v4)
\Edge[](v0)(v5)
\Edge[](v0)(v6)
\Edge[](v0)(v7)
\Edge[](v0)(v8)
\Edge[](v1)(v2)
\Edge[](v1)(v5)
\Edge[](v1)(v6)
\Edge[](v1)(v9)
\Edge[](v1)(v10)
\Edge[](v2)(v3)
\Edge[](v2)(v6)
\Edge[](v2)(v7)
\Edge[](v2)(v8)
\Edge[](v2)(v10)
\Edge[](v3)(v4)
\Edge[](v3)(v8)
\Edge[](v3)(v9)
\Edge[](v4)(v5)
\Edge[](v4)(v7)
\Edge[](v4)(v9)
\Edge[](v4)(v10)
\Edge[](v5)(v6)
\Edge[](v5)(v7)
\Edge[](v5)(v8)
\Edge[](v6)(v9)
\Edge[](v6)(v10)
\Edge[](v7)(v9)
\Edge[](v7)(v10)
\Edge[](v7)(v11)
\Edge[](v7)(v12)
\Edge[](v7)(v13)
\Edge[](v8)(v9)
\Edge[](v8)(v10)
\Edge[](v8)(v11)
\Edge[](v8)(v12)
\Edge[](v8)(v13)
\Edge[](v9)(v16)
\Edge[](v9)(v14)
\Edge[](v9)(v15)
\Edge[](v10)(v16)
\Edge[](v10)(v14)
\Edge[](v10)(v15)
\Edge[](v11)(v12)
\Edge[](v11)(v13)
\Edge[](v12)(v13)
\Edge[](v14)(v16)
\Edge[](v14)(v15)
\Edge[](v15)(v16)
\end{tikzpicture}}
\scalebox{\c}{\qquad\begin{tikzpicture}
\GraphInit[vstyle=Classic]
\Vertex[L=\hbox{},x=\r*0.4647231720437686cm,y=\r*0.8854560256532099cm]{v0}
\Vertex[L=\hbox{},x=\r*0.0cm,y=\r*1.0cm]{v1}
\Vertex[L=\hbox{},x=\r*-0.4647231720437686cm,y=\r*0.8854560256532099cm]{v2}
\Vertex[L=\hbox{},x=\r*-0.8229838658936564cm,y=\r*0.5680647467311558cm]{v3}
\Vertex[L=\hbox{},x=\r*-0.992708874098054cm,y=\r*0.12053668025532306cm]{v4}
\Vertex[L=\hbox{},x=\r*-0.9350162426854148cm,y=\r*-0.3546048870425356cm]{v5}
\Vertex[L=\hbox{},x=\r*-0.6631226582407953cm,y=\r*-0.7485107481711011cm]{v6}
\Vertex[L=\hbox{},x=\r*-0.23931566428755782cm,y=\r*-0.970941817426052cm]{v7}
\Vertex[L=\hbox{},x=\r*0.23931566428755782cm,y=\r*-0.970941817426052cm]{v8}
\Vertex[L=\hbox{},x=\r*0.6631226582407953cm,y=\r*-0.7485107481711011cm]{v9}
\Vertex[L=\hbox{},x=\r*0.9350162426854148cm,y=\r*-0.3546048870425356cm]{v10}
\Vertex[L=\hbox{},x=\r*0.992708874098054cm,y=\r*0.12053668025532306cm]{v11}
\Vertex[L=\hbox{},x=\r*0.8229838658936564cm,y=\r*0.5680647467311558cm]{v12}
\Edge[](v0)(v1)
\Edge[](v0)(v4)
\Edge[](v0)(v5)
\Edge[](v0)(v6)
\Edge[](v0)(v7)
\Edge[](v0)(v8)
\Edge[](v1)(v2)
\Edge[](v1)(v5)
\Edge[](v1)(v6)
\Edge[](v1)(v9)
\Edge[](v1)(v10)
\Edge[](v2)(v3)
\Edge[](v2)(v6)
\Edge[](v2)(v7)
\Edge[](v2)(v8)
\Edge[](v2)(v10)
\Edge[](v2)(v12)
\Edge[](v3)(v4)
\Edge[](v3)(v8)
\Edge[](v3)(v9)
\Edge[](v3)(v12)
\Edge[](v4)(v5)
\Edge[](v4)(v7)
\Edge[](v4)(v9)
\Edge[](v4)(v10)
\Edge[](v4)(v12)
\Edge[](v5)(v6)
\Edge[](v5)(v7)
\Edge[](v5)(v8)
\Edge[](v6)(v9)
\Edge[](v6)(v10)
\Edge[](v7)(v9)
\Edge[](v7)(v11)
\Edge[](v8)(v9)
\Edge[](v8)(v10)
\Edge[](v8)(v11)
\Edge[](v8)(v12)
\Edge[](v9)(v11)
\Edge[](v9)(v12)
\Edge[](v10)(v11)
\end{tikzpicture}}
\scalebox{\c}{\qquad\begin{tikzpicture}
\GraphInit[vstyle=Classic]
\Vertex[L=\hbox{},x=\r*0.4647231720437686cm,y=\r*0.8854560256532099cm]{v0}
\Vertex[L=\hbox{},x=\r*0.0cm,y=\r*1.0cm]{v1}
\Vertex[L=\hbox{},x=\r*-0.4647231720437686cm,y=\r*0.8854560256532099cm]{v2}
\Vertex[L=\hbox{},x=\r*-0.8229838658936564cm,y=\r*0.5680647467311558cm]{v3}
\Vertex[L=\hbox{},x=\r*-0.992708874098054cm,y=\r*0.12053668025532306cm]{v4}
\Vertex[L=\hbox{},x=\r*-0.9350162426854148cm,y=\r*-0.3546048870425356cm]{v5}
\Vertex[L=\hbox{},x=\r*-0.6631226582407953cm,y=\r*-0.7485107481711011cm]{v6}
\Vertex[L=\hbox{},x=\r*-0.23931566428755782cm,y=\r*-0.970941817426052cm]{v7}
\Vertex[L=\hbox{},x=\r*0.23931566428755782cm,y=\r*-0.970941817426052cm]{v8}
\Vertex[L=\hbox{},x=\r*0.6631226582407953cm,y=\r*-0.7485107481711011cm]{v9}
\Vertex[L=\hbox{},x=\r*0.9350162426854148cm,y=\r*-0.3546048870425356cm]{v10}
\Vertex[L=\hbox{},x=\r*0.992708874098054cm,y=\r*0.12053668025532306cm]{v11}
\Vertex[L=\hbox{},x=\r*0.8229838658936564cm,y=\r*0.5680647467311558cm]{v12}
\Edge[](v0)(v1)
\Edge[](v0)(v4)
\Edge[](v0)(v5)
\Edge[](v0)(v6)
\Edge[](v0)(v7)
\Edge[](v0)(v8)
\Edge[](v1)(v2)
\Edge[](v1)(v5)
\Edge[](v1)(v6)
\Edge[](v1)(v12)
\Edge[](v2)(v3)
\Edge[](v2)(v6)
\Edge[](v2)(v7)
\Edge[](v2)(v8)
\Edge[](v2)(v9)
\Edge[](v2)(v12)
\Edge[](v3)(v4)
\Edge[](v3)(v8)
\Edge[](v3)(v9)
\Edge[](v4)(v5)
\Edge[](v4)(v7)
\Edge[](v4)(v9)
\Edge[](v4)(v12)
\Edge[](v5)(v6)
\Edge[](v5)(v7)
\Edge[](v5)(v8)
\Edge[](v6)(v12)
\Edge[](v7)(v10)
\Edge[](v7)(v11)
\Edge[](v8)(v9)
\Edge[](v8)(v10)
\Edge[](v8)(v11)
\Edge[](v8)(v12)
\Edge[](v10)(v11)
\Edge[](v10)(v12)
\Edge[](v11)(v12)
\end{tikzpicture}}
\scalebox{\c}{\qquad\begin{tikzpicture}
\GraphInit[vstyle=Classic]
\Vertex[L=\hbox{},x=\r*0.4647231720437686cm,y=\r*0.8854560256532099cm]{v0}
\Vertex[L=\hbox{},x=\r*0.0cm,y=\r*1.0cm]{v1}
\Vertex[L=\hbox{},x=\r*-0.4647231720437686cm,y=\r*0.8854560256532099cm]{v2}
\Vertex[L=\hbox{},x=\r*-0.8229838658936564cm,y=\r*0.5680647467311558cm]{v3}
\Vertex[L=\hbox{},x=\r*-0.992708874098054cm,y=\r*0.12053668025532306cm]{v4}
\Vertex[L=\hbox{},x=\r*-0.9350162426854148cm,y=\r*-0.3546048870425356cm]{v5}
\Vertex[L=\hbox{},x=\r*-0.6631226582407953cm,y=\r*-0.7485107481711011cm]{v6}
\Vertex[L=\hbox{},x=\r*-0.23931566428755782cm,y=\r*-0.970941817426052cm]{v7}
\Vertex[L=\hbox{},x=\r*0.23931566428755782cm,y=\r*-0.970941817426052cm]{v8}
\Vertex[L=\hbox{},x=\r*0.6631226582407953cm,y=\r*-0.7485107481711011cm]{v9}
\Vertex[L=\hbox{},x=\r*0.9350162426854148cm,y=\r*-0.3546048870425356cm]{v10}
\Vertex[L=\hbox{},x=\r*0.992708874098054cm,y=\r*0.12053668025532306cm]{v11}
\Vertex[L=\hbox{},x=\r*0.8229838658936564cm,y=\r*0.5680647467311558cm]{v12}
\Edge[](v0)(v1)
\Edge[](v0)(v4)
\Edge[](v0)(v5)
\Edge[](v0)(v6)
\Edge[](v0)(v7)
\Edge[](v0)(v8)
\Edge[](v1)(v2)
\Edge[](v1)(v5)
\Edge[](v1)(v6)
\Edge[](v2)(v3)
\Edge[](v2)(v6)
\Edge[](v2)(v7)
\Edge[](v2)(v8)
\Edge[](v2)(v9)
\Edge[](v3)(v4)
\Edge[](v3)(v8)
\Edge[](v3)(v9)
\Edge[](v4)(v5)
\Edge[](v4)(v7)
\Edge[](v4)(v9)
\Edge[](v5)(v6)
\Edge[](v5)(v7)
\Edge[](v5)(v8)
\Edge[](v7)(v10)
\Edge[](v7)(v11)
\Edge[](v7)(v12)
\Edge[](v8)(v9)
\Edge[](v8)(v10)
\Edge[](v8)(v11)
\Edge[](v8)(v12)
\Edge[](v10)(v11)
\Edge[](v10)(v12)
\Edge[](v11)(v12)
\end{tikzpicture}}
\scalebox{\c}{\qquad\begin{tikzpicture}
\GraphInit[vstyle=Classic]
\Vertex[L=\hbox{},x=\r*0.6427876096865394cm,y=\r*0.766044443118978cm]{v0}
\Vertex[L=\hbox{},x=\r*0.0cm,y=\r*1.0cm]{v1}
\Vertex[L=\hbox{},x=\r*-0.6427876096865394cm,y=\r*0.766044443118978cm]{v2}
\Vertex[L=\hbox{},x=\r*-0.984807753012208cm,y=\r*0.17364817766693033cm]{v3}
\Vertex[L=\hbox{},x=\r*-0.8660254037844386cm,y=\r*-0.5cm]{v4}
\Vertex[L=\hbox{},x=\r*-0.3420201433256688cm,y=\r*-0.9396926207859083cm]{v5}
\Vertex[L=\hbox{},x=\r*0.3420201433256688cm,y=\r*-0.9396926207859083cm]{v6}
\Vertex[L=\hbox{},x=\r*0.8660254037844386cm,y=\r*-0.5cm]{v7}
\Vertex[L=\hbox{},x=\r*0.984807753012208cm,y=\r*0.17364817766693033cm]{v8}
\Edge[](v0)(v1)
\Edge[](v0)(v4)
\Edge[](v0)(v5)
\Edge[](v0)(v6)
\Edge[](v0)(v7)
\Edge[](v1)(v2)
\Edge[](v1)(v5)
\Edge[](v1)(v6)
\Edge[](v1)(v8)
\Edge[](v2)(v3)
\Edge[](v2)(v6)
\Edge[](v2)(v8)
\Edge[](v3)(v4)
\Edge[](v3)(v7)
\Edge[](v3)(v8)
\Edge[](v4)(v5)
\Edge[](v4)(v7)
\Edge[](v5)(v6)
\Edge[](v5)(v7)
\Edge[](v6)(v8)
\end{tikzpicture}}
\scalebox{\c}{\qquad\begin{tikzpicture}
\GraphInit[vstyle=Classic]
\Vertex[L=\hbox{},x=\r*0.6427876096865394cm,y=\r*0.766044443118978cm]{v0}
\Vertex[L=\hbox{},x=\r*0.0cm,y=\r*1.0cm]{v1}
\Vertex[L=\hbox{},x=\r*-0.6427876096865394cm,y=\r*0.766044443118978cm]{v2}
\Vertex[L=\hbox{},x=\r*-0.984807753012208cm,y=\r*0.17364817766693033cm]{v3}
\Vertex[L=\hbox{},x=\r*-0.8660254037844386cm,y=\r*-0.5cm]{v4}
\Vertex[L=\hbox{},x=\r*-0.3420201433256688cm,y=\r*-0.9396926207859083cm]{v5}
\Vertex[L=\hbox{},x=\r*0.3420201433256688cm,y=\r*-0.9396926207859083cm]{v6}
\Vertex[L=\hbox{},x=\r*0.8660254037844386cm,y=\r*-0.5cm]{v7}
\Vertex[L=\hbox{},x=\r*0.984807753012208cm,y=\r*0.17364817766693033cm]{v8}
\Edge[](v0)(v1)
\Edge[](v0)(v4)
\Edge[](v0)(v5)
\Edge[](v0)(v6)
\Edge[](v1)(v2)
\Edge[](v1)(v5)
\Edge[](v1)(v6)
\Edge[](v2)(v3)
\Edge[](v2)(v6)
\Edge[](v2)(v7)
\Edge[](v2)(v8)
\Edge[](v3)(v4)
\Edge[](v3)(v7)
\Edge[](v3)(v8)
\Edge[](v4)(v5)
\Edge[](v4)(v7)
\Edge[](v4)(v8)
\Edge[](v5)(v6)
\Edge[](v7)(v8)
\end{tikzpicture}}
\scalebox{\c}{\qquad\begin{tikzpicture}
\GraphInit[vstyle=Classic]
\Vertex[L=\hbox{},x=\r*0.6427876096865394cm,y=\r*0.766044443118978cm]{v0}
\Vertex[L=\hbox{},x=\r*0.0cm,y=\r*1.0cm]{v1}
\Vertex[L=\hbox{},x=\r*-0.6427876096865394cm,y=\r*0.766044443118978cm]{v2}
\Vertex[L=\hbox{},x=\r*-0.984807753012208cm,y=\r*0.17364817766693033cm]{v3}
\Vertex[L=\hbox{},x=\r*-0.8660254037844386cm,y=\r*-0.5cm]{v4}
\Vertex[L=\hbox{},x=\r*-0.3420201433256688cm,y=\r*-0.9396926207859083cm]{v5}
\Vertex[L=\hbox{},x=\r*0.3420201433256688cm,y=\r*-0.9396926207859083cm]{v6}
\Vertex[L=\hbox{},x=\r*0.8660254037844386cm,y=\r*-0.5cm]{v7}
\Vertex[L=\hbox{},x=\r*0.984807753012208cm,y=\r*0.17364817766693033cm]{v8}
\Edge[](v0)(v1)
\Edge[](v0)(v4)
\Edge[](v0)(v5)
\Edge[](v0)(v6)
\Edge[](v1)(v2)
\Edge[](v1)(v5)
\Edge[](v1)(v6)
\Edge[](v1)(v7)
\Edge[](v1)(v8)
\Edge[](v2)(v3)
\Edge[](v2)(v6)
\Edge[](v2)(v7)
\Edge[](v2)(v8)
\Edge[](v3)(v4)
\Edge[](v3)(v6)
\Edge[](v3)(v8)
\Edge[](v4)(v5)
\Edge[](v4)(v7)
\Edge[](v5)(v6)
\Edge[](v6)(v8)
\Edge[](v7)(v8)
\end{tikzpicture}}
\scalebox{\c}{\qquad\begin{tikzpicture}
\GraphInit[vstyle=Classic]
\Vertex[L=\hbox{},x=\r*0.5cm,y=\r*0.8660254037844386cm]{v0}
\Vertex[L=\hbox{},x=\r*0.0cm,y=\r*1.0cm]{v1}
\Vertex[L=\hbox{},x=\r*-0.5cm,y=\r*0.8660254037844386cm]{v2}
\Vertex[L=\hbox{},x=\r*-0.8660254037844386cm,y=\r*0.5cm]{v3}
\Vertex[L=\hbox{},x=\r*-1.0cm,y=\r*0.0cm]{v4}
\Vertex[L=\hbox{},x=\r*-0.8660254037844386cm,y=\r*-0.5cm]{v5}
\Vertex[L=\hbox{},x=\r*-0.5cm,y=\r*-0.8660254037844386cm]{v6}
\Vertex[L=\hbox{},x=\r*0.0cm,y=\r*-1.0cm]{v7}
\Vertex[L=\hbox{},x=\r*0.5cm,y=\r*-0.8660254037844386cm]{v8}
\Vertex[L=\hbox{},x=\r*0.8660254037844386cm,y=\r*-0.5cm]{v9}
\Vertex[L=\hbox{},x=\r*1.0cm,y=\r*0.0cm]{v10}
\Vertex[L=\hbox{},x=\r*0.8660254037844386cm,y=\r*0.5cm]{v11}
\Edge[](v0)(v1)
\Edge[](v0)(v4)
\Edge[](v0)(v5)
\Edge[](v0)(v6)
\Edge[](v1)(v2)
\Edge[](v1)(v5)
\Edge[](v1)(v6)
\Edge[](v1)(v7)
\Edge[](v1)(v8)
\Edge[](v1)(v9)
\Edge[](v1)(v10)
\Edge[](v1)(v11)
\Edge[](v2)(v3)
\Edge[](v2)(v6)
\Edge[](v2)(v7)
\Edge[](v2)(v8)
\Edge[](v3)(v4)
\Edge[](v3)(v6)
\Edge[](v3)(v9)
\Edge[](v3)(v10)
\Edge[](v3)(v11)
\Edge[](v4)(v5)
\Edge[](v4)(v7)
\Edge[](v4)(v8)
\Edge[](v5)(v6)
\Edge[](v7)(v8)
\Edge[](v9)(v10)
\Edge[](v9)(v11)
\Edge[](v10)(v11)
\end{tikzpicture}}
\scalebox{\c}{\qquad\begin{tikzpicture}
\GraphInit[vstyle=Classic]
\Vertex[L=\hbox{},x=\r*0.5877852522924731cm,y=\r*0.8090169943749475cm]{v0}
\Vertex[L=\hbox{},x=\r*0.0cm,y=\r*1.0cm]{v1}
\Vertex[L=\hbox{},x=\r*-0.5877852522924731cm,y=\r*0.8090169943749475cm]{v2}
\Vertex[L=\hbox{},x=\r*-0.9510565162951535cm,y=\r*0.30901699437494745cm]{v3}
\Vertex[L=\hbox{},x=\r*-0.9510565162951535cm,y=\r*-0.30901699437494745cm]{v4}
\Vertex[L=\hbox{},x=\r*-0.5877852522924731cm,y=\r*-0.8090169943749475cm]{v5}
\Vertex[L=\hbox{},x=\r*0.0cm,y=\r*-1.0cm]{v6}
\Vertex[L=\hbox{},x=\r*0.5877852522924731cm,y=\r*-0.8090169943749475cm]{v7}
\Vertex[L=\hbox{},x=\r*0.9510565162951535cm,y=\r*-0.30901699437494745cm]{v8}
\Vertex[L=\hbox{},x=\r*0.9510565162951535cm,y=\r*0.30901699437494745cm]{v9}
\Edge[](v0)(v1)
\Edge[](v0)(v4)
\Edge[](v0)(v5)
\Edge[](v0)(v6)
\Edge[](v1)(v2)
\Edge[](v1)(v5)
\Edge[](v1)(v6)
\Edge[](v1)(v7)
\Edge[](v1)(v8)
\Edge[](v2)(v3)
\Edge[](v2)(v6)
\Edge[](v2)(v7)
\Edge[](v2)(v8)
\Edge[](v2)(v9)
\Edge[](v3)(v4)
\Edge[](v3)(v6)
\Edge[](v3)(v9)
\Edge[](v4)(v5)
\Edge[](v4)(v7)
\Edge[](v4)(v8)
\Edge[](v4)(v9)
\Edge[](v5)(v6)
\Edge[](v6)(v9)
\Edge[](v7)(v8)
\end{tikzpicture}}
\scalebox{\c}{\qquad\begin{tikzpicture}
\GraphInit[vstyle=Classic]
\Vertex[L=\hbox{},x=\r*0.7071067811865476cm,y=\r*0.7071067811865476cm]{v0}
\Vertex[L=\hbox{},x=\r*0.0cm,y=\r*1.0cm]{v1}
\Vertex[L=\hbox{},x=\r*-0.7071067811865476cm,y=\r*0.7071067811865476cm]{v2}
\Vertex[L=\hbox{},x=\r*-1.0cm,y=\r*0.0cm]{v3}
\Vertex[L=\hbox{},x=\r*-0.7071067811865476cm,y=\r*-0.7071067811865476cm]{v4}
\Vertex[L=\hbox{},x=\r*0.0cm,y=\r*-1.0cm]{v5}
\Vertex[L=\hbox{},x=\r*0.7071067811865476cm,y=\r*-0.7071067811865476cm]{v6}
\Vertex[L=\hbox{},x=\r*1.0cm,y=\r*0.0cm]{v7}
\Edge[](v0)(v1)
\Edge[](v0)(v4)
\Edge[](v0)(v5)
\Edge[](v0)(v6)
\Edge[](v1)(v2)
\Edge[](v1)(v5)
\Edge[](v1)(v6)
\Edge[](v1)(v7)
\Edge[](v2)(v3)
\Edge[](v2)(v6)
\Edge[](v2)(v7)
\Edge[](v3)(v4)
\Edge[](v3)(v6)
\Edge[](v3)(v7)
\Edge[](v4)(v5)
\Edge[](v4)(v6)
\Edge[](v5)(v6)
\Edge[](v6)(v7)
\end{tikzpicture}}
\scalebox{\c}{\qquad\begin{tikzpicture}
\GraphInit[vstyle=Classic]
\Vertex[L=\hbox{},x=\r*0.4647231720437686cm,y=\r*0.8854560256532099cm]{v0}
\Vertex[L=\hbox{},x=\r*0.0cm,y=\r*1.0cm]{v1}
\Vertex[L=\hbox{},x=\r*-0.4647231720437686cm,y=\r*0.8854560256532099cm]{v2}
\Vertex[L=\hbox{},x=\r*-0.8229838658936564cm,y=\r*0.5680647467311558cm]{v3}
\Vertex[L=\hbox{},x=\r*-0.992708874098054cm,y=\r*0.12053668025532306cm]{v4}
\Vertex[L=\hbox{},x=\r*-0.9350162426854148cm,y=\r*-0.3546048870425356cm]{v5}
\Vertex[L=\hbox{},x=\r*-0.6631226582407953cm,y=\r*-0.7485107481711011cm]{v6}
\Vertex[L=\hbox{},x=\r*-0.23931566428755782cm,y=\r*-0.970941817426052cm]{v7}
\Vertex[L=\hbox{},x=\r*0.23931566428755782cm,y=\r*-0.970941817426052cm]{v8}
\Vertex[L=\hbox{},x=\r*0.6631226582407953cm,y=\r*-0.7485107481711011cm]{v9}
\Vertex[L=\hbox{},x=\r*0.9350162426854148cm,y=\r*-0.3546048870425356cm]{v10}
\Vertex[L=\hbox{},x=\r*0.992708874098054cm,y=\r*0.12053668025532306cm]{v11}
\Vertex[L=\hbox{},x=\r*0.8229838658936564cm,y=\r*0.5680647467311558cm]{v12}
\Edge[](v0)(v1)
\Edge[](v0)(v4)
\Edge[](v0)(v5)
\Edge[](v0)(v6)
\Edge[](v0)(v8)
\Edge[](v0)(v11)
\Edge[](v1)(v2)
\Edge[](v1)(v5)
\Edge[](v1)(v6)
\Edge[](v1)(v7)
\Edge[](v1)(v9)
\Edge[](v1)(v10)
\Edge[](v1)(v11)
\Edge[](v2)(v3)
\Edge[](v2)(v6)
\Edge[](v2)(v8)
\Edge[](v2)(v9)
\Edge[](v3)(v4)
\Edge[](v3)(v6)
\Edge[](v3)(v7)
\Edge[](v3)(v8)
\Edge[](v3)(v10)
\Edge[](v3)(v11)
\Edge[](v4)(v5)
\Edge[](v4)(v6)
\Edge[](v4)(v7)
\Edge[](v4)(v9)
\Edge[](v4)(v10)
\Edge[](v5)(v6)
\Edge[](v5)(v8)
\Edge[](v5)(v11)
\Edge[](v6)(v12)
\Edge[](v7)(v8)
\Edge[](v7)(v10)
\Edge[](v7)(v12)
\Edge[](v8)(v9)
\Edge[](v8)(v10)
\Edge[](v8)(v12)
\Edge[](v9)(v11)
\Edge[](v9)(v12)
\Edge[](v10)(v12)
\Edge[](v11)(v12)
\end{tikzpicture}}
\scalebox{\c}{\qquad\begin{tikzpicture}
\GraphInit[vstyle=Classic]
\Vertex[L=\hbox{},x=\r*0.4067366430758004cm,y=\r*0.9135454576426009cm]{v0}
\Vertex[L=\hbox{},x=\r*0.0cm,y=\r*1.0cm]{v1}
\Vertex[L=\hbox{},x=\r*-0.4067366430758004cm,y=\r*0.9135454576426009cm]{v2}
\Vertex[L=\hbox{},x=\r*-0.7431448254773942cm,y=\r*0.6691306063588582cm]{v3}
\Vertex[L=\hbox{},x=\r*-0.9510565162951535cm,y=\r*0.30901699437494745cm]{v4}
\Vertex[L=\hbox{},x=\r*-0.9945218953682733cm,y=\r*-0.10452846326765342cm]{v5}
\Vertex[L=\hbox{},x=\r*-0.8660254037844386cm,y=\r*-0.5cm]{v6}
\Vertex[L=\hbox{},x=\r*-0.5877852522924731cm,y=\r*-0.8090169943749475cm]{v7}
\Vertex[L=\hbox{},x=\r*-0.20791169081775923cm,y=\r*-0.9781476007338057cm]{v8}
\Vertex[L=\hbox{},x=\r*0.20791169081775923cm,y=\r*-0.9781476007338057cm]{v9}
\Vertex[L=\hbox{},x=\r*0.5877852522924731cm,y=\r*-0.8090169943749475cm]{v10}
\Vertex[L=\hbox{},x=\r*0.8660254037844386cm,y=\r*-0.5cm]{v11}
\Vertex[L=\hbox{},x=\r*0.9945218953682733cm,y=\r*-0.10452846326765342cm]{v12}
\Vertex[L=\hbox{},x=\r*0.9510565162951535cm,y=\r*0.30901699437494745cm]{v13}
\Vertex[L=\hbox{},x=\r*0.7431448254773942cm,y=\r*0.6691306063588582cm]{v14}
\Edge[](v0)(v1)
\Edge[](v0)(v4)
\Edge[](v0)(v5)
\Edge[](v0)(v6)
\Edge[](v0)(v8)
\Edge[](v1)(v2)
\Edge[](v1)(v5)
\Edge[](v1)(v6)
\Edge[](v1)(v7)
\Edge[](v1)(v9)
\Edge[](v2)(v3)
\Edge[](v2)(v6)
\Edge[](v2)(v8)
\Edge[](v2)(v9)
\Edge[](v3)(v4)
\Edge[](v3)(v6)
\Edge[](v3)(v7)
\Edge[](v3)(v8)
\Edge[](v4)(v5)
\Edge[](v4)(v6)
\Edge[](v4)(v7)
\Edge[](v4)(v9)
\Edge[](v5)(v6)
\Edge[](v5)(v8)
\Edge[](v6)(v10)
\Edge[](v6)(v11)
\Edge[](v6)(v12)
\Edge[](v6)(v13)
\Edge[](v6)(v14)
\Edge[](v7)(v8)
\Edge[](v7)(v10)
\Edge[](v7)(v11)
\Edge[](v8)(v9)
\Edge[](v8)(v12)
\Edge[](v8)(v13)
\Edge[](v8)(v14)
\Edge[](v9)(v10)
\Edge[](v9)(v11)
\Edge[](v10)(v11)
\Edge[](v12)(v13)
\Edge[](v12)(v14)
\Edge[](v13)(v14)
\end{tikzpicture}}
\scalebox{\c}{\qquad\begin{tikzpicture}
\GraphInit[vstyle=Classic]
\Vertex[L=\hbox{},x=\r*0.6427876096865394cm,y=\r*0.766044443118978cm]{v0}
\Vertex[L=\hbox{},x=\r*0.0cm,y=\r*1.0cm]{v1}
\Vertex[L=\hbox{},x=\r*-0.6427876096865394cm,y=\r*0.766044443118978cm]{v2}
\Vertex[L=\hbox{},x=\r*-0.984807753012208cm,y=\r*0.17364817766693033cm]{v3}
\Vertex[L=\hbox{},x=\r*-0.8660254037844386cm,y=\r*-0.5cm]{v4}
\Vertex[L=\hbox{},x=\r*-0.3420201433256688cm,y=\r*-0.9396926207859083cm]{v5}
\Vertex[L=\hbox{},x=\r*0.3420201433256688cm,y=\r*-0.9396926207859083cm]{v6}
\Vertex[L=\hbox{},x=\r*0.8660254037844386cm,y=\r*-0.5cm]{v7}
\Vertex[L=\hbox{},x=\r*0.984807753012208cm,y=\r*0.17364817766693033cm]{v8}
\Edge[](v0)(v1)
\Edge[](v0)(v4)
\Edge[](v0)(v5)
\Edge[](v0)(v6)
\Edge[](v0)(v7)
\Edge[](v1)(v2)
\Edge[](v1)(v5)
\Edge[](v1)(v6)
\Edge[](v1)(v7)
\Edge[](v2)(v3)
\Edge[](v2)(v6)
\Edge[](v2)(v8)
\Edge[](v3)(v4)
\Edge[](v3)(v7)
\Edge[](v3)(v8)
\Edge[](v4)(v5)
\Edge[](v4)(v6)
\Edge[](v4)(v8)
\Edge[](v5)(v6)
\Edge[](v5)(v7)
\Edge[](v7)(v8)
\end{tikzpicture}}
\scalebox{\c}{\qquad\begin{tikzpicture}
\GraphInit[vstyle=Classic]
\Vertex[L=\hbox{},x=\r*0.36124166618715303cm,y=\r*0.9324722294043558cm]{v0}
\Vertex[L=\hbox{},x=\r*0.0cm,y=\r*1.0cm]{v1}
\Vertex[L=\hbox{},x=\r*-0.36124166618715303cm,y=\r*0.9324722294043558cm]{v2}
\Vertex[L=\hbox{},x=\r*-0.6736956436465572cm,y=\r*0.7390089172206591cm]{v3}
\Vertex[L=\hbox{},x=\r*-0.8951632913550623cm,y=\r*0.44573835577653825cm]{v4}
\Vertex[L=\hbox{},x=\r*-0.9957341762950345cm,y=\r*0.09226835946330199cm]{v5}
\Vertex[L=\hbox{},x=\r*-0.961825643172819cm,y=\r*-0.27366299007208283cm]{v6}
\Vertex[L=\hbox{},x=\r*-0.7980172272802396cm,y=\r*-0.6026346363792563cm]{v7}
\Vertex[L=\hbox{},x=\r*-0.5264321628773558cm,y=\r*-0.8502171357296141cm]{v8}
\Vertex[L=\hbox{},x=\r*-0.18374951781657053cm,y=\r*-0.9829730996839018cm]{v9}
\Vertex[L=\hbox{},x=\r*0.18374951781657053cm,y=\r*-0.9829730996839018cm]{v10}
\Vertex[L=\hbox{},x=\r*0.5264321628773558cm,y=\r*-0.8502171357296141cm]{v11}
\Vertex[L=\hbox{},x=\r*0.7980172272802396cm,y=\r*-0.6026346363792563cm]{v12}
\Vertex[L=\hbox{},x=\r*0.961825643172819cm,y=\r*-0.27366299007208283cm]{v13}
\Vertex[L=\hbox{},x=\r*0.9957341762950345cm,y=\r*0.09226835946330199cm]{v14}
\Vertex[L=\hbox{},x=\r*0.8951632913550623cm,y=\r*0.44573835577653825cm]{v15}
\Vertex[L=\hbox{},x=\r*0.6736956436465572cm,y=\r*0.7390089172206591cm]{v16}
\Edge[](v0)(v1)
\Edge[](v0)(v4)
\Edge[](v0)(v5)
\Edge[](v0)(v6)
\Edge[](v0)(v7)
\Edge[](v0)(v8)
\Edge[](v0)(v13)
\Edge[](v1)(v16)
\Edge[](v1)(v2)
\Edge[](v1)(v5)
\Edge[](v1)(v6)
\Edge[](v1)(v9)
\Edge[](v1)(v10)
\Edge[](v1)(v11)
\Edge[](v1)(v12)
\Edge[](v1)(v15)
\Edge[](v2)(v3)
\Edge[](v2)(v7)
\Edge[](v2)(v8)
\Edge[](v2)(v10)
\Edge[](v2)(v13)
\Edge[](v2)(v14)
\Edge[](v3)(v16)
\Edge[](v3)(v4)
\Edge[](v3)(v6)
\Edge[](v3)(v7)
\Edge[](v3)(v9)
\Edge[](v3)(v10)
\Edge[](v3)(v12)
\Edge[](v3)(v15)
\Edge[](v4)(v5)
\Edge[](v4)(v8)
\Edge[](v4)(v9)
\Edge[](v4)(v11)
\Edge[](v4)(v14)
\Edge[](v4)(v15)
\Edge[](v5)(v6)
\Edge[](v5)(v7)
\Edge[](v5)(v8)
\Edge[](v5)(v13)
\Edge[](v6)(v8)
\Edge[](v6)(v11)
\Edge[](v6)(v14)
\Edge[](v7)(v9)
\Edge[](v7)(v10)
\Edge[](v7)(v11)
\Edge[](v7)(v14)
\Edge[](v7)(v15)
\Edge[](v8)(v16)
\Edge[](v8)(v9)
\Edge[](v8)(v10)
\Edge[](v8)(v12)
\Edge[](v8)(v15)
\Edge[](v9)(v13)
\Edge[](v9)(v15)
\Edge[](v10)(v13)
\Edge[](v10)(v14)
\Edge[](v11)(v16)
\Edge[](v11)(v12)
\Edge[](v11)(v13)
\Edge[](v12)(v16)
\Edge[](v12)(v13)
\Edge[](v12)(v14)
\Edge[](v13)(v16)
\Edge[](v13)(v14)
\Edge[](v13)(v15)
\Edge[](v14)(v16)
\end{tikzpicture}}
\scalebox{\c}{\qquad\begin{tikzpicture}
\GraphInit[vstyle=Classic]
\Vertex[L=\hbox{},x=\r*0.4647231720437686cm,y=\r*0.8854560256532099cm]{v0}
\Vertex[L=\hbox{},x=\r*0.0cm,y=\r*1.0cm]{v1}
\Vertex[L=\hbox{},x=\r*-0.4647231720437686cm,y=\r*0.8854560256532099cm]{v2}
\Vertex[L=\hbox{},x=\r*-0.8229838658936564cm,y=\r*0.5680647467311558cm]{v3}
\Vertex[L=\hbox{},x=\r*-0.992708874098054cm,y=\r*0.12053668025532306cm]{v4}
\Vertex[L=\hbox{},x=\r*-0.9350162426854148cm,y=\r*-0.3546048870425356cm]{v5}
\Vertex[L=\hbox{},x=\r*-0.6631226582407953cm,y=\r*-0.7485107481711011cm]{v6}
\Vertex[L=\hbox{},x=\r*-0.23931566428755782cm,y=\r*-0.970941817426052cm]{v7}
\Vertex[L=\hbox{},x=\r*0.23931566428755782cm,y=\r*-0.970941817426052cm]{v8}
\Vertex[L=\hbox{},x=\r*0.6631226582407953cm,y=\r*-0.7485107481711011cm]{v9}
\Vertex[L=\hbox{},x=\r*0.9350162426854148cm,y=\r*-0.3546048870425356cm]{v10}
\Vertex[L=\hbox{},x=\r*0.992708874098054cm,y=\r*0.12053668025532306cm]{v11}
\Vertex[L=\hbox{},x=\r*0.8229838658936564cm,y=\r*0.5680647467311558cm]{v12}
\Edge[](v0)(v1)
\Edge[](v0)(v4)
\Edge[](v0)(v5)
\Edge[](v0)(v6)
\Edge[](v1)(v2)
\Edge[](v1)(v5)
\Edge[](v1)(v6)
\Edge[](v1)(v7)
\Edge[](v1)(v8)
\Edge[](v1)(v12)
\Edge[](v2)(v3)
\Edge[](v2)(v7)
\Edge[](v2)(v12)
\Edge[](v3)(v4)
\Edge[](v3)(v6)
\Edge[](v3)(v7)
\Edge[](v3)(v8)
\Edge[](v3)(v12)
\Edge[](v4)(v5)
\Edge[](v4)(v8)
\Edge[](v5)(v6)
\Edge[](v6)(v9)
\Edge[](v6)(v10)
\Edge[](v6)(v11)
\Edge[](v7)(v12)
\Edge[](v8)(v9)
\Edge[](v8)(v10)
\Edge[](v8)(v11)
\Edge[](v9)(v10)
\Edge[](v9)(v11)
\Edge[](v10)(v11)
\end{tikzpicture}}
\scalebox{\c}{\qquad\begin{tikzpicture}
\GraphInit[vstyle=Classic]
\Vertex[L=\hbox{},x=\r*0.6427876096865394cm,y=\r*0.766044443118978cm]{v0}
\Vertex[L=\hbox{},x=\r*0.0cm,y=\r*1.0cm]{v1}
\Vertex[L=\hbox{},x=\r*-0.6427876096865394cm,y=\r*0.766044443118978cm]{v2}
\Vertex[L=\hbox{},x=\r*-0.984807753012208cm,y=\r*0.17364817766693033cm]{v3}
\Vertex[L=\hbox{},x=\r*-0.8660254037844386cm,y=\r*-0.5cm]{v4}
\Vertex[L=\hbox{},x=\r*-0.3420201433256688cm,y=\r*-0.9396926207859083cm]{v5}
\Vertex[L=\hbox{},x=\r*0.3420201433256688cm,y=\r*-0.9396926207859083cm]{v6}
\Vertex[L=\hbox{},x=\r*0.8660254037844386cm,y=\r*-0.5cm]{v7}
\Vertex[L=\hbox{},x=\r*0.984807753012208cm,y=\r*0.17364817766693033cm]{v8}
\Edge[](v0)(v1)
\Edge[](v0)(v4)
\Edge[](v0)(v5)
\Edge[](v0)(v6)
\Edge[](v1)(v2)
\Edge[](v1)(v5)
\Edge[](v1)(v6)
\Edge[](v1)(v7)
\Edge[](v1)(v8)
\Edge[](v2)(v3)
\Edge[](v2)(v7)
\Edge[](v2)(v8)
\Edge[](v3)(v4)
\Edge[](v3)(v7)
\Edge[](v3)(v8)
\Edge[](v4)(v5)
\Edge[](v4)(v6)
\Edge[](v5)(v6)
\Edge[](v7)(v8)
\end{tikzpicture}}
\scalebox{\c}{\qquad\begin{tikzpicture}
\GraphInit[vstyle=Classic]
\Vertex[L=\hbox{},x=\r*0.6427876096865394cm,y=\r*0.766044443118978cm]{v0}
\Vertex[L=\hbox{},x=\r*0.0cm,y=\r*1.0cm]{v1}
\Vertex[L=\hbox{},x=\r*-0.6427876096865394cm,y=\r*0.766044443118978cm]{v2}
\Vertex[L=\hbox{},x=\r*-0.984807753012208cm,y=\r*0.17364817766693033cm]{v3}
\Vertex[L=\hbox{},x=\r*-0.8660254037844386cm,y=\r*-0.5cm]{v4}
\Vertex[L=\hbox{},x=\r*-0.3420201433256688cm,y=\r*-0.9396926207859083cm]{v5}
\Vertex[L=\hbox{},x=\r*0.3420201433256688cm,y=\r*-0.9396926207859083cm]{v6}
\Vertex[L=\hbox{},x=\r*0.8660254037844386cm,y=\r*-0.5cm]{v7}
\Vertex[L=\hbox{},x=\r*0.984807753012208cm,y=\r*0.17364817766693033cm]{v8}
\Edge[](v0)(v1)
\Edge[](v0)(v4)
\Edge[](v0)(v5)
\Edge[](v0)(v6)
\Edge[](v0)(v7)
\Edge[](v0)(v8)
\Edge[](v1)(v2)
\Edge[](v1)(v5)
\Edge[](v1)(v6)
\Edge[](v1)(v7)
\Edge[](v1)(v8)
\Edge[](v2)(v3)
\Edge[](v2)(v5)
\Edge[](v2)(v7)
\Edge[](v3)(v4)
\Edge[](v3)(v6)
\Edge[](v3)(v8)
\Edge[](v4)(v5)
\Edge[](v4)(v7)
\Edge[](v5)(v7)
\Edge[](v5)(v8)
\Edge[](v6)(v7)
\end{tikzpicture}}
\scalebox{\c}{\qquad\begin{tikzpicture}
\GraphInit[vstyle=Classic]
\Vertex[L=\hbox{},x=\r*0.7818314824680298cm,y=\r*0.6234898018587335cm]{v0}
\Vertex[L=\hbox{},x=\r*0.0cm,y=\r*1.0cm]{v1}
\Vertex[L=\hbox{},x=\r*-0.7818314824680298cm,y=\r*0.6234898018587335cm]{v2}
\Vertex[L=\hbox{},x=\r*-0.9749279121818236cm,y=\r*-0.2225209339563144cm]{v3}
\Vertex[L=\hbox{},x=\r*-0.4338837391175582cm,y=\r*-0.9009688679024191cm]{v4}
\Vertex[L=\hbox{},x=\r*0.4338837391175582cm,y=\r*-0.9009688679024191cm]{v5}
\Vertex[L=\hbox{},x=\r*0.9749279121818236cm,y=\r*-0.2225209339563144cm]{v6}
\Edge[](v0)(v1)
\Edge[](v0)(v4)
\Edge[](v0)(v5)
\Edge[](v0)(v6)
\Edge[](v1)(v2)
\Edge[](v1)(v5)
\Edge[](v1)(v6)
\Edge[](v2)(v3)
\Edge[](v2)(v5)
\Edge[](v2)(v6)
\Edge[](v3)(v4)
\Edge[](v3)(v5)
\Edge[](v3)(v6)
\Edge[](v4)(v5)
\Edge[](v4)(v6)
\Edge[](v5)(v6)
\end{tikzpicture}}
    \caption{\ben{All 5-critical $(P_5,W_4)$-free graphs.}}
    \label{fig:all5critP5W4free}
\end{figure}
As mentioned in Section~\ref{6-vertex-critical}, each $k$-vertex-critical $(P_5,W_4)$-free graph is either isomorphic to $K_k$, isomorphic to $\overline{C_{2k-1}}$, contains $C_5$ as an induced subgraph or contains $\overline{C_7}$ as an induced subgraph. By additionally running\footnote{In fact, we made two independent implementations of \autoref{algo:genGraphs} and obtained the same results for both implementations, which gives us great confidence that there are no bugs leading to incorrect results. These implementations are made publicly available at~\cite{criticalpfree-site} and~\cite{jorik-github}.} \autoref{algo:genGraphs} for $k=5$, $\mathcal{H}=\{P_5,W_4\}$ and $I=C_5$ and $I=\overline{C_7}$ we obtain the characterization from \autoref{th:characterization}. We summarize the counts of $5$-vertex-critical $(P_5,W_4)$-free and $5$-critical $(P_5,W_4)$-free graphs (see \autoref{fig:all5critP5W4free}) in \autoref{tab:counts} and also make these graphs publicly available at the House of Graphs~\cite{CDG} at: \url{https://houseofgraphs.org/meta-directory/critical-h-free}. We also executed the algorithm for $k=6$, $\mathcal{H}=\{P_5,W_4\}$ and $I=C_5$, but the algorithm did not terminate after one week of running.

\begin{table}[htbp]
    \centering
    \footnotesize
    \begin{tabular}{|c| c | c |}
        \hline
        $n$ & \# $5$-vertex-critical $(P_5,W_4)$-free graphs & \# $5$-critical $(P_5,W_4)$-free graphs \\
        \hline
        5 & 1 & 1 \\
        6 & 0 & 0 \\
        7 & 1 & 1 \\
        8 & 1 & 1 \\
        9 & 44 & 7 \\
        10 & 4 & 1 \\
        11 & 0 & 0 \\
        12 & 1 & 1 \\
        13 & 8 & 6 \\
        14 & 0 & 0 \\
        15 & 2 & 1 \\
        16 & 0 & 0 \\
        17 & 2 & 2 \\
        \hline
        Total & 64 & 21\\
        \hline
    \end{tabular}
    \vskip12pt
    \caption{An overview of the number of pairwise non-isomorphic $5$-vertex-critical $(P_5,W_4)$-free graphs and $5$-critical $(P_5,W_4)$-free graphs.}
    \label{tab:counts}
\end{table}

\section{Conclusion}\label{conclusion}
In this paper, we proved that there are finitely many $k$-vertex-critical $(P_5, W_4)$-free graphs for all $k \ge 1$ and characterized all such graphs for $k=5$. Our results gave an affirmative
answer to the problem posed in~\cite{CGHS21} for $H = W_4$.
In the future, it is natural to investigate the finiteness of the set of $k$-vertex-critical $(P_5,H)$-free graphs for other graphs $H$ of order 5. \ben{From~\cite{CH23} and the recent work showing finiteness in the cases where $H=K_{1,3}+P_1$ and $H=\overline{K_3+2P_1}$~\cite{XJGH24}, the only remaining open cases are when $H$ is any of the following eight graphs of order $5$:}

    \begin{multicols}{2}
        \begin{itemize}
             \item \ben{co-gem} 
             \item \ben{chair} 
             \item \ben{$\overline{\text{diamond}+P_1}$} 
             \item \ben{$C_4+P_1$}
             \item \ben{bull} 
             \item  \ben{$\overline{P_3+2P_1}$}
             \item \ben{$K_5-e$} 
             \item \ben{$K_5$} 
        \end{itemize}
    \end{multicols}

\red{Of these, we expect the results in this paper will be most directly applicable to $\overline{P_3+2P_1}$, $K_5-e$ and $K_5$ as these graphs are also dense, and so are more difficult to induce around a $C_5$. These are also the only remaining cases where the vertex-critical graphs are $\overline{C_{2\ell+1}}$-free for some $\ell$ (and in fact, we have either $\ell=4$ or $\ell=5$). We also note that cases when $H$ is co-gem~\cite{BC24} and chair, $\overline{\text{diamond}+P_1}$,  or bull~\cite{ABCK24} are known to be finite for all $k\ge 1$ when the graphs are further restricted to be $2P_2$-free.}


\subsection*{Acknowledgements}

\noindent  Shenwei Huang is supported by the Natural Science Foundation of China (NSFC) under Grant 12171256 and 12161141006.
We also acknowledge the support of the joint FWO-NSFC scientific mobility project with grant number VS01224N. The research of Jan Goedgebeur was supported by Internal Funds of KU Leuven and an FWO grant with grant number G0AGX24N. Jorik Jooken is supported by a Postdoctoral Fellowship of the Research Foundation Flanders (FWO) with grant number 1222524N. The reseach of Ben Cameron was supported by the Natural Sciences and Engineering Research Council of Canada (NSERC), grants RGPIN-2022-03697 and DGECR-2022-00446.

\end{document}